\def\@tocline#1#2#3#4#5#6#7{\relax
  \ifnum #1>\c@tocdepth % then omit
  \else
    \par \addpenalty\@secpenalty\addvspace{#2}%
    \begingroup \hyphenpenalty\@M
    \@ifempty{#4}{%
      \@tempdima\csname r@tocindent\number#1\endcsname\relax
    }{%
      \@tempdima#4\relax
    }%
    \parindent\z@ \leftskip#3\relax \advance\leftskip\@tempdima\relax
    \rightskip\@pnumwidth plus4em \parfillskip-\@pnumwidth
    #5\leavevmode\hskip-\@tempdima
      \ifcase #1
       \or\or \hskip 1em \or \hskip 2em \else \hskip 3em \fi%
      #6\nobreak\relax
      \dotfill
      \hbox to\@pnumwidth{\@tocpagenum{#7}}
    \par
    \nobreak
    \endgroup
  \fi}
\newtheorem{theorem}{Theorem}[section]
\newtheorem{lemma}[theorem]{Lemma}
\newtheorem{proposition}[theorem]{Proposition}
\newtheorem{corollary}[theorem]{Corollary}
\theoremstyle{definition}
\newtheorem{definition}[theorem]{Definition}
\theoremstyle{remark}
\newtheorem{remark}[theorem]{Remark}
\newcommand{\N}{{\mathbb N}}
\newcommand{\R}{{\mathbb R}}
\newcommand{\tr}{\mathrm{tr}^*}
\newcommand{\beqn}{\begin{eqnarray}}
\newcommand{\eeqn}{\end{eqnarray}}   % equation with number
\newcommand{\beq}{\begin{eqnarray*}}
\newcommand{\eeq}{\end{eqnarray*}}
\newcommand{\be}{\begin{equation}}
\newcommand{\bel}[1]{\begin{equation}\label{#1}}
\newcommand{\ee}{\end{equation}}%% This macro does not work with amstex.
\newcommand{\BA}{\begin{array}}
\newcommand{\EA}{\end{array}}
\newcommand{\BAN}{\renewcommand{\arraystretch}{1.2}
\setlength{\arraycolsep}{2pt}\begin{array}}
\newcommand{\BAV}[2]{\renewcommand{\arraystretch}{#1}
\setlength{\arraycolsep}{#2}\begin{array}}
\newcommand{\BSA}{\begin{subarray}}
\newcommand{\ESA}{\end{subarray}}
\newcommand{\BAL}{\begin{aligned}}
\newcommand{\EAL}{\end{aligned}}
\newcommand{\supp}{\mathrm{supp}\,}
\newcommand{\dist}{\mathrm{dist}\,}
\newcommand{\sign}{\mathrm{sign}}
\newcommand{\diam}{\mathrm{diam}\,}
\newcommand{\prt}{\partial}
\def\ga{\alpha}     \def\gb{\beta}
\def\gf{\phi}           
        \def\gn{\nu}
      \def\gw{\omega}
                \def\gz{\zeta}
     \def\Gd{\Delta}
\def\Gw{\Omega}              
\def\CD{{\mathcal D}}      
      \def\CL{{\mathcal L}}
   \def\BBE {\mathbb E}    
\def\BBG {\mathbb G}       
   \def\BBK {\mathbb K}    
\def\BBP {\mathbb P}   \def\BBR {\mathbb R}
\def\GTM {\mathfrak M}
\def\tr{\mathrm{tr}}
\def\div{\mathrm{div}}
\newcommand{\ei}{{\phi_{\xm }}}
\newcommand{\xa}{\alpha}
\newcommand{\xb}{\beta}
\newcommand{\xg}{\gamma}
\newcommand{\xG}{\Gamma}
\newcommand{\xd}{\delta}
\newcommand{\xD}{\Delta}
\newcommand{\xe}{\varepsilon}
\newcommand{\xz}{\zeta}
\newcommand{\xl}{\lambda}
\newcommand{\xm}{\mu}
\newcommand{\xn}{\nu}
\newcommand{\xr}{\rho}
\newcommand{\xf}{\phi}
\newcommand{\xo}{\omega}
\newcommand{\xO}{\Omega}
\newcommand{\myint}[2]{{\displaystyle \int_{#1}^{#2}}}
\newcommand{\ap}{{\xa_{\scaleto{+}{3pt}}}}
\newcommand{\am}{{\xa_{\scaleto{-}{3pt}}}}
\numberwithin{equation}{section}
\begin{document}

\title[Green kernel and Martin kernel of Schr\"odinger operators]{Green kernel and Martin kernel of Schr\"odinger operators with singular potential and application to the B.V.P. for linear elliptic equations}

\author{Konstantinos T. Gkikas}
\address{Konstantinos T. Gkikas, Department of Mathematics, National and Kapodistrian University of Athens, 15784 Athens, Greece}
\email{kugkikas@gmail.com}

\author[P.T. Nguyen]{Phuoc-Tai Nguyen}
\address{Department of Mathematics and Statistics, Masaryk University, Brno, Czech Republic}
\email{ptnguyen@math.muni.cz}

%\date{\today}

\begin{abstract} Let $\Omega \subset \mathbb{R}^N$ ($N \geq 3$) be a $C^2$ bounded domain and  $K \subset \Omega$ be a compact, $C^2$ submanifold in $\mathbb{R}^N$ without boundary, of dimension $k$ with $0\leq k < N-2$. We consider the Schr\"odinger operator $L_\mu = \Delta + \mu d_K^{-2}$ in $\Omega \setminus K$, where $d_K(x) = \dist(x,K)$. The optimal Hardy constant $H=(N-k-2)/2$ is deeply involved in the study of $-L_\mu$. When $\mu \leq H^2$, we establish sharp, two-sided estimates for Green kernel and Martin kernel of $-L_\mu$. We use these estimates to prove the existence, uniqueness and a priori estimates of the solution to the boundary value problem with measures for linear equations associated to $-L_\mu$.

	\medskip
	
	\noindent\textit{Key words: Green kernel, Martin kernel, two-sided estimates, Schr\"odinger operators, singular potential, Representation theorem, Hardy potential.}
	
	\medskip
	
	\noindent\textit{2000 Mathematics Subject Classification: 35J10, 35J08, 35J75, 35J25.}
	
\end{abstract}

\maketitle
\tableofcontents
\section{Introduction}
Let $\Omega \subset \R^N$ ($N \geq 3$) be a $C^2$ bounded domain and $K\subset\Omega$ be a compact, $C^2$ submanifold in $\R^N$ without boundary, of dimension $k$ with $0 \leq k < N-2$. Denote $d(x)=\dist(x,\partial \Omega)$ and $d_K(x) = \dist(x,K)$. In this paper, we study the Schr\"odinger operator
\begin{equation} \label{Lmu} L_\mu = L_\mu^{\Omega,K}:=\Delta + \frac{\mu}{d_K^2}
\end{equation}
in $\Omega \setminus K$, where $\mu \in \BBR$ is a parameter. Here $\mu/ d_K^2$ is a singular potential.
%When there is no confusion, we drop the superscript $\Omega$ in the notation %$L_\mu^\Omega$, i.e. we use the notation $L_\mu$.

Our first purpose is to deal with the question of two-sided estimates on the Green kernel associated to $-L_\mu$ in $\Omega \setminus K$.

It is well known that in the free potential case in $\xO$, i.e. $\mu=0$ and $L_0=\Delta$ in $\xO$, this question has been completely treated and sharp estimates, up to boundary, have been obtained in (see, e.g., \cite{Zha}). See also \cite{BVi, MVbook} for relevant estimates.

The special case $\mu \neq 0$, $k=0$ and $K=\{0\} \subset \Omega$ has attracted a lot of attention since in this case $L_\mu$ is a singular operator involving Hardy-Leray potential $\mu |x|^{-2}$. Global upper estimate and local lower estimate (i.e. in compact subsets of $\Omega$) on the Green kernel associated to $-L_\mu$ was derived by Chen, Quaas and Zhou in \cite[Lemma 4.1 and Remark 4.1]{Chen2} due to the two-sided estimate on the corresponding heat kernel established by Filippas, Moschini and Tertikas in \cite{FMoT2}.

The general case is more challenging and  less understood. In this case, the analysis is more intricate and relies strongly on the geometrical properties of the set $K$ and $\xO$, which is closely linked to the optimal Hardy constant
$${\mathcal C}_{\xO,K}:=\inf_{\varphi \in H^1_0(\xO)}\frac{\int_\xO|\nabla \varphi|^2dx}{\int_\xO d_K^{-2}\varphi^2dx}.$$

It is well known that ${\mathcal C}_{\xO,K}\in (0,H^2]$ (see e.g. D\'avila and Dupaigne \cite{DD1, DD2} and Barbatis, Filippas and Tertikas \cite{BFT}) where
\begin{equation} \label{H}
H:=\frac{N-k-2}{2}.
\end{equation}

When $K=\{0\} \subset \Omega$, it is classical that ${\mathcal C}_{\Omega,\{0\}}=\left(\frac{N-2}{2} \right)^2$. In general, ${\mathcal C}_{\Omega,K}=H^2$ provided that $-\Delta d_K^{2+k-N} \geq 0$ in the sense of distributions in $\Omega \setminus K$ or $\Omega=K_\beta$ with $\beta$  small enough (see \cite{BFT}), where
$$K_\beta :=\{ x \in \R^N \setminus K: d_K(x) < \beta \}.$$

For $\mu \leq H^2$, let $\am$ and $\ap$ are the roots of the algebraic equation $\ga^2 - 2H\ga + \mu=0$, i.e.
\bel{apm}
\am:=H-\sqrt{H^2-\mu}, \quad \ap:=H+\sqrt{H^2-\mu}.
\ee
Note that $\am\leq H\leq\ap<2H$ and $\am \geq 0$ if and only if $\mu \geq 0$.

For any $\mu \leq H^2$, it follows from  \cite[Lemma 2.4 and Theorem 2.6]{DD1} and \cite[page 337, Lemma 7, Theorem 5]{DD2} that
\be\label{Lin01i} \lambda_\mu:=\inf\left\{\int_{\Gw}\left(|\nabla u|^2-\frac{\xm }{d_K^2}u^2\right)dx: u \in C_c^1(\Omega), \int_{\Gw} u^2 dx=1\right\}>-\infty
\ee
and the corresponding eigenfunction $\phi_\mu$, with normalization $\| \phi_\mu \|_{L^2(\Omega)}=1$, satisfies two-sided estimate $\phi_\mu \approx d\,d_K^{-\am}$ in $\Omega \setminus K$ (see subsection \ref{subsect:eigen} for more detail). A combination of this estimate for the eigenfunction and results of Filippas, Moschini and Tertikas in \cite[Proposition 2.8 and Theorem 1.3]{FMoT}, leads to the existence, as well as two-sided estimate, of a heat kernel $h(t,x,y)$ associated to $\partial_t-L_\mu$ when $\lambda_\mu>0$. Set
\begin{equation} \label{Gheat} G_{\mu}(x,y)=\int_0^\infty h(t,x,y)dt, \quad x,y \in \Omega \setminus K,\; x \neq y.
\end{equation}

\begin{theorem}[Green kernel]  \label{Greenkernel} Assume $0\leq k<N-2$, $\mu \leq H^2$ and $\lambda_\mu>0$.

{\sc I. Existence of the minimal Green kernel.} $G_\mu$ is the minimal  Green kernel  of $-L_\mu$ in $\Omega \setminus K$, i.e. for any $y \in \Omega \setminus K$, $G_\mu(\cdot,y)$ is the minimal solution of
	\bel{greeneq}
	-L_\mu u =\delta_y \quad \text{in } \Omega \setminus K,
	\ee
in the sense of distributions, where $\delta_y$ denotes the Dirac measure concentrated at $y$. \smallskip	

{\sc II. Two-sided estimates.}
	
	(i) If $\mu< \left( \frac{N-2}{2}\right)^2$ then for any $x,y \in \Omega \setminus K$, $x \neq y$,
	\bel{Greenesta}
	G_{\mu}(x,y)\approx |x-y|^{2-N} \left(1 \wedge \frac{d(x)d(y)}{|x-y|^2}\right) \left(1 \wedge \frac{d_K(x)d_K(y)}{|x-y|^2} \right)^{-\am}.
	\ee
	
	(ii) If $k=0$, $K=\{0\}$ and $\mu = \left( \frac{N-2}{2}\right)^2$ then for any $x,y \in \Omega \setminus K$, $x \neq y$,
	\bel{Greenestb} \BAL
	G_{\mu}(x,y) &\approx |x-y|^{2-N} \left(1 \wedge \frac{d(x)d(y)}{|x-y|^2}\right) \left(1 \wedge \frac{|x||y|}{|x-y|^2} \right)^{-\frac{N-2}{2}}\\
	& \quad +(|x||y|)^{-\frac{N-2}{2}}\left|\ln\left(1 \wedge \frac{|x-y|^2}{d(x)d(y)}\right)\right|.
	\EAL \ee
	
	Here the notation "$\approx$" is introduced in the list of notations at the end of this section. The implicit constants in \eqref{Greenesta} and \eqref{Greenestb} depend on $N,\Omega,K,\mu$.
\end{theorem}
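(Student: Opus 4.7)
\noindent\emph{Proposal.} The strategy is to define the Green kernel by the time integral $G_\mu(x,y) = \int_0^\infty h(t,x,y)\,dt$ of the heat kernel $h$, and then to extract the two-sided estimates in Part II by integrating the two-sided heat kernel bounds that follow from combining the eigenfunction estimate $\phi_\mu \approx d\,d_K^{-\am}$ (recalled in Subsection \ref{subsect:eigen}) with the general framework of Filippas--Moschini--Tertikas in \cite{FMoT}.

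For Part I, I would first check that the integral defining $G_\mu$ converges absolutely for $x\neq y$: the upper heat kernel bound is Gaussian for small $t$, while the spectral gap $\lambda_\mu>0$ forces the decay $h(t,x,y)\lesssim e^{-\lambda_\mu t}\phi_\mu(x)\phi_\mu(y)$ for large $t$. Next, for $\vgf\in C_c^\infty(\Omega\setminus K)$ I would test the identity $\partial_t h(t,\cdot,y) = L_\mu h(t,\cdot,y)$ against $\vgf$ on an interval $(\vge,T)$, integrate in $t$, and pass to the limits $\vge\to 0^+$ and $T\to\infty$: the contribution at $T$ vanishes by the decay above, while the contribution at $\vge$ converges to $\vgf(y)$ since $h(t,\cdot,y)\to\delta_y$ as $t\to 0^+$, yielding $-L_\mu G_\mu(\cdot,y)=\delta_y$ distributionally. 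Minimality is inherited from that of the Dirichlet heat kernel: given any nonnegative distributional solution $u$ of \eqref{greeneq}, the standard approximation of $\delta_y$ by smooth bumps combined with the parabolic comparison principle gives $u\geq G_\mu(\cdot,y)$.

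For Part II, the two-sided heat kernel estimate has the schematic form
\begin{equation*}
h(t,x,y) \approx t^{-N/2}\,\Phi(t,x)\,\Phi(t,y)\,e^{-c|x-y|^2/t},
\end{equation*}
where $\Phi(t,z)\approx \bigl(d(z)/(d(z)\vee\sqrt{t}\,)\bigr)\bigl(d_K(z)/(d_K(z)\vee\sqrt{t}\,)\bigr)^{-\am}$ encodes the profiles at scale $\sqrt t$ against $\partial\Omega$ and $K$. I would split the integral at the natural scale $t_\star:=|x-y|^2$. On $(0,t_\star]$ the Gaussian localizes the integrand; freezing $\Phi$ at its value for $\sqrt t \sim |x-y|$ and integrating out the Gaussian gives exactly the Laplacian-type kernel $|x-y|^{2-N}$ multiplied by $(1\wedge d(x)d(y)/|x-y|^2)$ and $(1\wedge d_K(x)d_K(y)/|x-y|^2)^{-\am}$. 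On $[t_\star,\infty)$ the Gaussian is comparable to $1$; the integrand becomes essentially $t^{-N/2}\Phi(t,x)\Phi(t,y)$, and its integral decouples into contributions from the regimes $\sqrt t\lesssim d_K$, $d_K\lesssim\sqrt t\lesssim d$, and $\sqrt t\gtrsim d$. When $\am<\ap$ (i.e.\ the subcritical case II(i)) each of these contributions is controlled by the leading power-type term already produced by the small-$t$ regime, so \eqref{Greenesta} follows.

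The main obstacle will be the critical case II(ii). There $\am=\ap=H$, and the two independent solutions of the associated radial Hardy ODE coalesce, so the heat kernel must pick up a logarithmic factor on the zone where both $|x|,|y|\gtrsim\sqrt{t}$. Integrating $t^{-N/2}(|x||y|)^{-H}$ over the intermediate range $|x-y|^2\lesssim t\lesssim d(x)d(y)$ produces exactly the extra term $(|x||y|)^{-(N-2)/2}\,|\ln(1\wedge |x-y|^2/(d(x)d(y)))|$ appearing in \eqref{Greenestb}. The delicate bookkeeping is to verify that outside this intermediate regime the logarithm is genuinely absorbed into the leading product, and that the lower bound actually reproduces the $|\ln|$; the latter requires either a careful chaining of Harnack inequalities along radial paths or, equivalently, a direct Feynman--Kac construction of trajectories concentrated in the annulus where the logarithm is generated.
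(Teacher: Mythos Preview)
Your overall strategy coincides with the paper's: $G_\mu$ is the time integral of the heat kernel from \cite{FMoT}, and the Green estimates come from integrating the two-sided heat kernel bounds, which have exactly the factorized form you write. The paper uses the substitution $s=|x-y|^2/t$ and splits the $s$-integral at a fixed level, equivalent to your split at $t_\star=|x-y|^2$.

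Your minimality argument, however, has a gap. A nonnegative distributional solution $u$ of \eqref{greeneq} carries no a priori upper bound near $\partial\Omega\cup K$, so a parabolic comparison principle (or an approximation of $\delta_y$ by bumps) does not by itself yield $u\geq G_\mu(\cdot,y)$: you would need to control the boundary behaviour of $u$, which is precisely what is not assumed. The paper argues elliptically instead. Set $v=G_\mu(\cdot,y)-u$; this is $L_\mu$-harmonic, and since $u\geq 0$ one has $v\leq G_\mu(\cdot,y)\leq C_y\,\phi_\mu$, the last inequality coming from the heat kernel upper bound. Hence $v_+\leq C_y\,\phi_\mu$, so $v_+/\tilde W\to 0$ on $\partial\Omega\cup K$, and the maximum principle of Lemma~\ref{comparison} gives $v_+\equiv 0$. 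The point is that only the one-sided control $v_+\lesssim\phi_\mu$ is needed, and this is free from $u\geq 0$ together with the already established upper estimate on $G_\mu$.

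For the critical lower bound in II(ii) you are overcomplicating things: no Harnack chaining or Feynman--Kac construction is needed. The heat kernel estimate of \cite{FMoT} is genuinely two-sided, and after the substitution $s=|x-y|^2/t$ the integrand on $s\in[|x-y|^2/T,\,1\vee 2D/T]$ is bounded below by a constant times $(|x-y|^2/(|x||y|))^{(N-2)/2}\bigl(1\wedge s\,d(x)d(y)/|x-y|^2\bigr)s^{-1}$; the exponent $-1$ appears precisely because $2\am=N-2$ makes $s^{N/2-2-\am}=s^{-1}$, and integrating $s^{-1}$ produces the logarithm directly. Incidentally, in your $t$-variable the effective integrand on $|x-y|^2\lesssim t\lesssim d(x)d(y)$ is $t^{-1}(|x||y|)^{-(N-2)/2}$, not $t^{-N/2}(|x||y|)^{-H}$: the $\Phi$ factors contribute $t^{(N-2)/2}$, which cancels all but one power of $t$.
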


\begin{remark}
(i) We note that, uniqueness may not hold true for \eqref{greeneq}. Therefore, in the sequel, \textit{by Green kernel we mean the minimal Green kernel $G_\mu$, which is defined in \eqref{Gheat}}.

(ii) Note that, in Theorem \ref{Greenkernel} (i), the critical case $k>0$ and $\mu=H^2$ is included.

(iii) One of the main assumptions in this paper is $\lambda_\mu>0$, which is fulfilled for instance if $\mu<{\mathcal C}_{\Omega,K}$. In the critical case $\mu=H^2$, $\lambda_{H^2}>0$ if $-\Delta d_K^{2+k-N} \geq 0$ in the sense of distributions in $\Omega \setminus K$. Finally, if $\Omega=K_\beta$ then $-\Delta d_K^{2+k-N} \geq 0$ provided $\beta$ is small enough. See \cite{BFT} for the proof of these results as well as for other domains satisfying $-\Delta d_K^{2+k-N} \geq 0.$	

(iii) Estimates \eqref{Greenesta} and \eqref{Greenestb} cover the ones  in \cite{Chen1, Chen2} for the case $k=0$, $K=\{0\}$ and are sharper than the estimate in  \cite[Corollary 7.3]{DD1}.

\end{remark}

Let $\beta_0$ be the constant in \eqref{straigh}. Let $\eta_{\beta_0}$ be a smooth function such that $0 \leq \eta_{\beta_0} \leq 1$, $\eta_{\beta_0}=1$ in $\overline{K}_{\frac{\xb_0}{4}}$ and $\supp \eta_{\beta_0} \subset K_{\frac{\beta_0}{2}}$. We define
\bel{W} W(x):=\left\{ \BAL &d_K(x)^{-\ap},\qquad&&\text{if}\;\mu <H^2, \\
	&d_K(x)^{-H}|\ln d_K(x)|,\qquad&&\text{if}\;\mu =H^2,
	\EAL \right. \quad x \in \Omega \setminus K,
\ee
	and
\bel{tildeW}
	\tilde W:=1-\eta_{\beta_0}+\eta_{\beta_0}W \quad \text{in } \Omega \setminus K.
\ee

Let $\BBG_\mu$ be the Green operator, i.e.
$$ \BBG_\mu[\tau](x) = \int_{\Omega \setminus K} G_\mu(x,y)\tau(y), \quad \tau \in \GTM(\Omega \setminus K).
$$

As it can be seen in Lemma \ref{existence1b}, the Green operator is a crucial tool in solving nonhomogeneous linear equation associated to $-L_\mu$ with ``zero datum'' on $\partial (\Omega \setminus K) = \partial \Omega \cup K$.
%\footnote{Note that the Green kernel is not zero on $K,$ when $\xa_{-}>0.$ In this case, %``zero datum'' means that for any $y\in \xO\setminus K$ and $0<r<C(\xO,y),$ there exists %$c_{y,r}$ such that $G_\xm(x,y)\leq c_{y,r}\xf_\xm,$ provided $|x-y|>r$.}.
More precisely, for $f \in L^\infty(\Omega)$, $\BBG_\mu[f]$ solves equation $-L_\mu u = f$ in $\Omega \setminus K$ with zero boundary condition in the sense
$$
	\lim_{\dist(x,F)\to 0}\frac{\BBG_\mu[f](x)}{\tilde W(x)}=0,\quad\forall\; \text{compact} \; F\subset \partial \Omega \cup K.
$$

To study linear equations with more general boundary data on $\partial(\Omega \setminus K) = \partial \Omega \cup K$, we use the \textit{$L_\mu$-harmonic measures} which is given below. Let $z \in \Omega \setminus K$ and $h\in C(\partial\Omega \cup K)$ and denote $\CL_{\mu ,z}(h):=v_h(z)$ where $v_h$ is the unique solution of the Dirichlet problem (see Lemma \ref{mainlemma1})
\be \label{linear} \left\{ \BAL
L_{\mu}v&=0\qquad \text{in}\;\;\xO\setminus K\\
v&=h\qquad \text{on}\;\;\partial\xO\cup K.
\EAL \right. \ee
Here the boundary value condition in \eqref{linear} is understood in the sense that

$$
	\lim_{x\in\Omega \setminus K,\;x\rightarrow y\in\partial \Omega \cup  K}\frac{v(x)}{\tilde W(x)}=h(y)\qquad\text{uniformly w.r.t. } y\in\partial\Omega \cup K.
$$
The mapping $h\mapsto \CL_{\mu,z}(h)$ is a linear positive functional on $C(\partial\Omega \cup K)$ (see Lemma \ref{comparison}). Thus there exists a unique Borel measure on $\partial\Omega \cup K$, called {\it $L_{\mu}$-harmonic measure in $\partial \Omega \cup K$ relative to $z$} and  denoted by $\omega^{z}$, such that
$$v_{h}(z)=\int_{\partial\Omega\cup K}h(y) d\omega^{z}(y).$$

Let $x_0 \in \Omega \setminus K$ be a fixed reference point. By Harnack inequality, the measures $\omega^{x}$ and $\omega^{x_0}$ are mutually absolutely continuous for any $x \in\Omega \setminus K$, hence we can define the Radon-Nikodym derivative
\bel{RN-derivative}
K_\mu(x,\xi):=\frac{d\omega^{x}}{d\omega^{x_0}}(\xi),\quad \omega^{x_0}-\text{a.e. }\;\xi\in\partial\Omega \cup K.
\ee

Let us now give a definition of kernel functions of $-L_\mu$ at $\xi$ which plays an important role in the sequel.
\begin{definition} \label{kernel}
	A function $\mathcal{K}$ defined in $(\Omega \setminus K) \times (\partial \Omega \cup K)$ is called a kernel function of $-L_\mu$  with pole at $\xi \in\partial\Omega \cup K$ and with basis at $x_0\in\Omega \setminus K$ if\smallskip
	
	(i) $\mathcal{K}(\cdot,\xi)$ is $L_{\mu }$-harmonic in $\Omega \setminus K$,\smallskip
	
	(ii) $\frac{\mathcal{K}(\cdot,\xi)}{\tilde W(\cdot)}$ can be extended as a continuous function on $\overline{\xO}\setminus\{\xi\}$ and for any $P \in(\prt\Gw\cup K)\setminus\{\xi\}$,
	$$\lim_{x \in \Omega \setminus K,\; x \to P}\frac{\mathcal{K}(x,\xi)}{\tilde W(x)}=0,$$\smallskip
	
	(iii) $\mathcal{K}(x,\xi)>0$ for every $x\in\xO\setminus K$ and $\mathcal{K}(x_0,\xi)=1$.
\end{definition}

Using main properties of the $L_{\mu }$-harmonic measures, which are established in Section 6, we prove  the existence and uniqueness of the kernel function $\mathcal{K}$ (see Proposition \ref{uniq}). Furthermore, we will show that (see Proposition \ref{martincorresp}) the following convergence holds
\be\label{martindef}
K_{\mu}(x,\xi)=\lim_{\Omega \setminus K \ni y\to\xi}\frac{G_\mu(x,y)}{G_\mu(x_0,y)}\qquad\forall\xi\in\partial \Omega \cup K,
\ee
which means that $K_\mu$ is the Martin kernel of $-L_\mu$ in $\Omega \setminus K$.

Let us present the main properties of the Martin kernel.

\begin{theorem}[Martin kernel] \label{Martin} Assume $0\leq k<N-2$, $\mu \leq H^2$ and $\lambda_\mu>0$.

{\sc I. Continuity.} For any $x \in \Omega \setminus K$, the function $\xi \mapsto K_\mu(x,\xi)$ is continuous on $\partial \Omega \cup K$.

{\sc II. Two-sided estimates.}

(i) If $\mu< \left( \frac{N-2}{2}\right)^2$ then
\be \label{Martinest1}
K_{\mu}(x,\xi) \approx\left\{
\BAL
&\frac{d(x)d_K(x)^{-\am}}{|x-\xi|^N},\quad &&\text{if } x \in \Omega \setminus K,\;  \xi \in \partial\xO \\
&\frac{d(x)d_K(x)^{-\am}}{|x-\xi|^{N-2-2\am}}, &&\text{if } x \in \Omega \setminus K,\; \xi \in K.
\EAL \right.
\ee

(ii) If  $k=0$, $K=\{0\}$ and $\mu= \left( \frac{N-2}{2}\right)^2$ then
\be\label{Martinest2}
K_{\mu}(x,\xi) \approx\left\{
\BAL
&\frac{d(x)|x|^{-\frac{N-2}{2}}}{|x-\xi|^N},\quad &&\text{if } x \in \Omega \setminus K,\; \xi \in \partial\Omega, \\
&d(x)|x|^{-\frac{N-2}{2}}\left|\ln\frac{|x|}{\CD_\Omega}\right|, &&\text{if } x \in \Omega \setminus K,\; \xi=0,
\EAL \right.
\ee
where $\CD_{\Omega}:=2\sup_{x \in \Omega}|x|$.

\end{theorem}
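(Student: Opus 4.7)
Both assertions can be derived from the Martin representation formula \eqref{martindef} combined with the Green kernel estimates of Theorem \ref{Greenkernel}. For the continuity assertion, the idea is to use the Harnack inequality and boundary Harnack estimates for nonnegative $L_\mu$-harmonic functions developed in Section 6 (the same tools used to establish \eqref{martindef}) to prove that the convergence $G_\mu(x,y)/G_\mu(x_0,y)\to K_\mu(x,\xi)$ is locally uniform in $\xi\in\partial\Omega\cup K$. Combined with the joint continuity of $G_\mu$ off the diagonal, this forces $\xi\mapsto K_\mu(x,\xi)$ to be continuous on $\partial\Omega\cup K$.

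For the two-sided estimates in case (i), substitute \eqref{Greenesta} into \eqref{martindef} and analyze the limit $y\to\xi$ separately for $\xi\in\partial\Omega$ and $\xi\in K$. In the first subcase, $d(y)\to 0$ while $d_K(y)\to d_K(\xi)\geq\dist(K,\partial\Omega)>0$; once $y$ is close enough to $\xi$, $1\wedge d(x)d(y)/|x-y|^2$ equals $d(x)d(y)/|x-y|^2$, so $d(y)$ cancels in the quotient, while $(1\wedge d_K(x)d_K(y)/|x-y|^2)^{-\am}$ converges to a quantity comparable to $d_K(x)^{-\am}$ (with constants depending only on $\Omega,K,\mu$, since $d_K(\xi)$ and $|x-\xi|$ are bounded between positive constants and $\diam\Omega$). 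Passing to the limit and using that $d(x_0)$, $|x_0-\xi|$, $d_K(x_0)$ are bounded away from $0$ for a fixed bulk reference point $x_0$ yields the first line of \eqref{Martinest1}. In the second subcase $\xi\in K$, $d_K(y)\to 0$ while $d(y)\to d(\xi)>0$, so $d_K(y)^{-\am}$ cancels and $d(y)$ contributes a bounded constant; the accumulated $|x-y|$ exponents ($2-N$ from the Newtonian factor, $-2$ from the $d$-factor and $+2\am$ from the $d_K$-factor) combine to $-(N-2-2\am)$ after division by the corresponding quantities for $x_0$, yielding the second line of \eqref{Martinest1}.

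Case (ii), in which $k=0$, $K=\{0\}$ and $\mu=((N-2)/2)^2$ so that $\am=\ap=(N-2)/2=:H$, is treated by the same substitution-and-limit scheme applied to \eqref{Greenestb}. For $\xi\in\partial\Omega$ the logarithmic term in \eqref{Greenestb} vanishes in the limit (because $|x-y|^2/(d(x)d(y))\to\infty$), so only the first term contributes and produces the first line of \eqref{Martinest2} by the same analysis as above with $\am$ replaced by $H$; for $\xi=0$, the first term of \eqref{Greenestb} simplifies to $(|x||y|)^{-H}(1\wedge d(x)d(y)/|x-y|^2)$ (the $|x-y|$ power cancels because $2-N-2+2H=0$), while the logarithmic term retains a $|\ln(|x|/\CD_\Omega)|$ contribution after cancellation of the $(|x_0||y|)^{-H}$ and $(|x||y|)^{-H}$ factors in the ratio; the resulting expression matches \eqref{Martinest2}. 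The main technical obstacle is the careful bookkeeping of the two $1\wedge\cdot$ factors across the different regimes of $x$ (close to $\partial\Omega$ or not, close to $K$ or not) and, in case (ii), verifying that the logarithmic term from \eqref{Greenestb} survives the limiting process precisely at $\xi=0$ while being sub-dominant for $\xi\in\partial\Omega$.
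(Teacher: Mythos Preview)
Your overall strategy matches the paper's: continuity is Proposition~\ref{continuous} (proved there by invoking the uniqueness of the kernel function from \cite{caffa}, after which the identification with the Green-function ratio in Proposition~\ref{martincorresp} is automatic), and the two-sided bounds follow by substituting the Green estimates of Theorem~\ref{Greenkernel} into \eqref{martindef}. The paper gives no further detail for the estimates, so your outline is in fact more explicit than the published proof.

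There is, however, a genuine computational slip in your treatment of the $d$-factor when $\xi\in K$ (and the symmetric slip for the $d_K$-factor when $\xi\in\partial\Omega$). You assume that $1\wedge d(x)d(y)/|x-y|^2$ always contributes an exponent $-2$, i.e.\ that the minimum always equals $d(x)d(y)/|x-y|^2$; but for $x$ well inside $\Omega$ and close to $\xi\in K$ one has $d(x)d(\xi)\gg|x-\xi|^2$, so the minimum is $1$ and there is no $|x-\xi|^{-2}$. Indeed your tally $2-N-2+2\am$ equals $-(N-2\am)$, not $-(N-2-2\am)$. The repair is to use the equivalence \eqref{12} (which holds equally for $d$, since $d$ is $1$-Lipschitz): for $\xi\in K$,
\[
1\wedge\frac{d(x)d(\xi)}{|x-\xi|^2}\;\approx\;\frac{d(x)\,d(\xi)}{\bigl(|x-\xi|+d(x)\bigr)\bigl(|x-\xi|+d(\xi)\bigr)}\;\approx\;d(x),
\]
because $d(\xi)\ge\dist(K,\partial\Omega)>0$ and, by the Lipschitz property, $|x-\xi|+d(x)\ge d(\xi)$, so both factors in the denominator are pinned between fixed positive constants. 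With this correction the exponent count reads $2-N+2\am=-(N-2-2\am)$ and the second line of \eqref{Martinest1} follows. The same mechanism with the roles of $d$ and $d_K$ reversed gives $(1\wedge d_K(x)d_K(\xi)/|x-\xi|^2)^{-\am}\approx d_K(x)^{-\am}$ for $\xi\in\partial\Omega$; the key is $|x-\xi|+d_K(x)\approx 1$, not that ``$|x-\xi|$ is bounded below by a positive constant'' (it is not, uniformly in $x$). The same correction applies to your case~(ii), where the parenthetical ``$2-N-2+2H=0$'' should read $2-N+2H=0$.
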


Estimates \eqref{Martinest1}, \eqref{Martinest1} are novel and show distinct behaviours of Martin kernel, according to whether $\xi \in \partial \Omega$ or $\xi \in K$.

It is interesting to note that two-sided estimates of the Green kernel and Martin kernel were also studied for Schr\"odinger operators of the form $-L_{\mu V}=-\Delta -  \mu V$ where the potential $V$ may blowup on the boundary $\partial \Omega$. When $V(x)=d(x)^{-2}$, the existence, as well as sharp estimates, of the Green kernel was obtained by Filippas, Moschini and Tertikas \cite{FMoT2} via the study of the respective parabolic problem, while the existence of the Martin kernel of $-L_\mu$ was established by Ancona \cite{An} in the subcritical case, i.e $\mu< \frac{1}{4}$, and by Gkikas-V\'{e}ron \cite{GkV} in the critical case $\mu=\frac{1}{4}$. In \cite{Mar1}, Marcus  dealt with a more general potential $V$ satisfying $|V(x)| \leq c\, d(x)^{-2}$ and obtained two-sided estimates on the Green kernel and Martin kernel  in terms of the first eigenfunction. In the case $V(x)=d_F(x)^{-2}$, where $F \subset \partial \Omega$ is a submanifold of dimension $0 \leq k \leq N-2$, these bounds were then exploited by Marcus and Nguyen \cite{Mar-Ng} to derive estimates of Green and Martin kernel on layers near the boundary $\partial \Omega$, which are in turn used to study respective linear and semilinear elliptic equations. Very recently, Marcus \cite{Mar2} has established two-sided estimates for positive $L_{\mu V}$-subharmonic and $L_{\mu V}$-superharmonic functions with $V$ satisfying $|V(x)| \leq cd(x)^{-2}$ and  provided a theory of linear equations associated to $L_{\mu V}$ which cover several results in \cite{MarNgu,Mar-Ng}. The case $0 \in \partial \Omega$ and $V(x)=|x|^{-2}$ was treated by Chen and V\'eron in \cite{CheVer2} where they constructed a Poisson kernel vanishing at $0$ and a singular kernel with a singularity at $0$. Relevant works on semilinear elliptic equations involving $-L_{\mu V}$ can be found in \cite{GkV,MarNgu,MarMor,Mar-Ng,MVbook,CheVer1, CheVer3}.

Next we provide the Representation theorem which states that there is a (1-1) correspondence between the class of positive $L_\mu$-harmonic functions in $\Omega \setminus K$ and the set $\GTM^+(\partial \Omega \cup K)$ of positive bounded measures on $\partial (\Omega \setminus K) = \partial \Omega \cup K$.

\begin{theorem}[Representation Theorem] \label{th:Rep} For any $\nu \in \GTM^+(\partial \Omega \cup K)$, the function $\BBK_\mu[\nu]$ is a positive $L_\mu$-harmonic function (i.e. $L_\mu \BBK_\mu[\nu]=0$) in the sense of distributions in $\Omega \setminus K$. Conversely, for any positive $L_\mu$-harmonic function $u$ (i.e. $L_\mu u = 0$) in the sense of distributions in $\Omega \setminus K$, there exists a unique measure $\nu \in \GTM^+(\partial \Omega \cup K)$ such that $u=\BBK_\mu[\nu]$.
\end{theorem}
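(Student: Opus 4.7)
The plan is to treat the two directions of the correspondence separately, using the $L_\mu$-harmonic measure theory of Section 6 together with the Martin kernel properties of Theorem \ref{Martin} and the limit formula \eqref{martindef}.

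\textbf{Forward direction.} Given $\nu\in\GTM^+(\partial\Omega\cup K)$, I would first verify that $\BBK_\mu[\nu]$ is positive and $L_\mu$-harmonic in the distributional sense. Positivity is clear from Definition \ref{kernel}(iii) and the positivity of $\nu$. To check $L_\mu \BBK_\mu[\nu]=0$, pair with $\varphi\in C_c^\infty(\Omega\setminus K)$ and apply Fubini:
$$
\int_{\Omega\setminus K} \BBK_\mu[\nu](x)\,L_\mu\varphi(x)\,dx = \int_{\partial\Omega\cup K} \int_{\Omega\setminus K} K_\mu(x,\xi)\,L_\mu\varphi(x)\,dx\,d\nu(\xi).
$$
The interchange is justified since Harnack's inequality, applied to the normalization $K_\mu(x_0,\xi)=1$, gives $K_\mu(\cdot,\xi)$ bounded on $\supp\varphi$ uniformly in $\xi$, and $\nu$ is finite. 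The inner integral vanishes by Definition \ref{kernel}(i).

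\textbf{Existence of the representing measure.} For the converse, fix a positive $L_\mu$-harmonic $u$ and exhaust $\Omega\setminus K$ by smooth subdomains $\overline{D_n}\subset\Omega\setminus K$ with $\bigcup_n D_n=\Omega\setminus K$. Since the potential $\mu d_K^{-2}$ is bounded on each $D_n$, classical theory yields an $L_\mu$-harmonic measure $\omega_n^x$ on $\partial D_n$ and
$$
u(x)=\int_{\partial D_n}u(y)\,d\omega_n^x(y)=\int_{\partial D_n}\frac{d\omega_n^x}{d\omega_n^{x_0}}(y)\,u(y)\,d\omega_n^{x_0}(y).
$$
Define $d\nu_n:=u\,d\omega_n^{x_0}$, a positive measure on $\overline\Omega$ of total mass $u(x_0)$, independent of $n$. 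Extract a weak-$\ast$ subsequential limit $\nu_n\rightharpoonup\nu\in\GTM^+(\overline\Omega)$; since $\partial D_n$ accumulates on $\partial\Omega\cup K$, the support of $\nu$ lies in $\partial\Omega\cup K$. Using the Martin limit \eqref{martindef} in the form $\frac{d\omega_n^x}{d\omega_n^{x_0}}(y_n)\to K_\mu(x,\xi)$ as $y_n\to\xi\in\partial\Omega\cup K$, one passes to the limit in the identity above to conclude $u(x)=\BBK_\mu[\nu](x)$.

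\textbf{Uniqueness and main obstacle.} Suppose $\BBK_\mu[\nu_1]=\BBK_\mu[\nu_2]$. The idea is to pair $\nu_i$ against an arbitrary $h\in C(\partial\Omega\cup K)$ through the Dirichlet problem \eqref{linear}: the solution $v_h$ exists and is unique by Lemma \ref{mainlemma1}, and the representation $v_h(z)=\int h\,d\omega^z$ together with a duality (Green-type) identity built from the $L_\mu$-harmonic measure machinery yields $\int h\,d\nu_1=\int h\,d\nu_2$, whence $\nu_1=\nu_2$. The delicate step is not uniqueness but the existence argument near the singular set $K$, where $d_K\to 0$ and the Martin kernel blows up as in \eqref{Martinest1}--\eqref{Martinest2}. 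Two points must be controlled: ruling out escape of mass of $\nu_n$ into the interior of $\Omega\setminus K$, and upgrading the pointwise convergence $\frac{d\omega_n^x}{d\omega_n^{x_0}}\to K_\mu(x,\cdot)$ into a form suitable for passing to the weak-$\ast$ limit. I expect this to rely on the Hardy exponent dichotomy via $\am,\ap$ in \eqref{apm}, a boundary Harnack principle adapted to $L_\mu$ at points of $K$, and the sharp kernel estimates of Theorems \ref{Greenkernel} and \ref{Martin}.
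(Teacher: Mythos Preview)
Your forward direction matches the paper's (which dispatches it in one line). The converse, however, takes a genuinely different route from the paper, and the route you sketch has a real gap.

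The paper does \emph{not} argue by weak-$\ast$ compactness of $u\,d\omega_n^{x_0}$ on an exhaustion. Instead it follows the r\'eduite (balayage) construction of Hunt--Wheeden: for closed $F\subset\partial\Omega\cup K$ one sets $\nu^x(F):=\inf\{R_u^{(\Omega\setminus K)\cap\overline G}(x):F\subset G\text{ open}\}$, checks this is a regular Borel measure, and uses Besicovitch differentiation plus the \emph{uniqueness of the kernel function} (Proposition~\ref{uniq}) to identify $d\nu^x/d\nu^{x_0}=K_\mu(x,\cdot)$, whence $u=\BBK_\mu[\nu^{x_0}]$. Uniqueness is then proved by showing $\nu(F)=\nu^{x_0}(F)$ for every closed $F$, via Lemma~\ref{Vsuperhar} (which realizes $R_u^{(\Omega\setminus K)\cap\overline G}$ concretely as a superharmonic $V$) together with Proposition~\ref{traceW} on the weak convergence $\tilde W\,d\omega^{x_0}_{O_n}\rightharpoonup d\omega^{x_0}$.

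Your compactness approach can be made to work in principle, but the step you flag as ``main obstacle'' is in fact a gap as written. Formula~\eqref{martindef} concerns the ratio $G_\mu(x,y)/G_\mu(x_0,y)$ for $y\in\Omega\setminus K$; it says nothing directly about the Radon--Nikodym derivative $d\omega_n^x/d\omega_n^{x_0}$ on $\partial D_n$. To pass to the limit in $\int_{\partial D_n}\frac{d\omega_n^x}{d\omega_n^{x_0}}\,d\nu_n$ against a weak-$\ast$ converging $\nu_n$, you need a \emph{uniform} (in $n$) continuous extension of these densities to $\overline\Omega$, not merely pointwise convergence along sequences $y_n\to\xi$. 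This is exactly what the boundary Harnack/Carleson machinery of Lemmas~\ref{lemharn}--\ref{Lemm10} and Theorem~\ref{lem2.2*} would have to supply, and it is nontrivial near $K$ (especially in the critical case $\mu=H^2$, where the logarithm in $\tilde W$ enters). Your uniqueness sketch is also incomplete: the ``Green-type duality identity'' you allude to is essentially the boundary-trace statement $\tr(\BBK_\mu[\nu])=\nu$ of Proposition~\ref{traceKG}(i), which in the paper is established \emph{after} (and independently of) the representation theorem; you would need to prove it first, or replace it by the paper's r\'eduite comparison argument.
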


In general, in order to characterize the behavior of a function on $\partial \Omega \cup K$, we introduce a notion of boundary trace which is defined in a \textit{dynamic way}.

\begin{definition}[Boundary trace] \label{def:trace}
	A function $u$ possesses a \emph{boundary trace}  if there exists a measure $\gn\in\GTM(\prt\Gw\cup K)$ such that for any smooth exhaustion  $\{ O_n \}$ of $\xO\setminus K$, there  holds
	\be\label{trab}
	\lim_{n\rightarrow\infty}\int_{ \partial O_n}\gf u\, d\xo_{O_n}^{x_0}=\int_{\partial \xO\cup K} \gf \,d\xn \quad\forall \gf \in C(\overline{\xO}).
	\ee
	The boundary trace of $u$ is denoted by $\tr(u)$.
\end{definition}

This notion allows to describe the boundary behavior of the Green kernel and Martin kernel, which can be seen  in the following proposition.

\begin{proposition}[Boundary trace of Green kernel and Martin kernel] \label{traceKG} ~~

(i) For any $\nu \in \GTM(\partial \Omega \cup K)$, $\tr(\BBK_\mu[\nu])=\nu$.

(ii) For any $\tau \in \GTM(\Omega \setminus K;\ei)$, $\tr(\BBG_\mu[\tau])=0$.
\end{proposition}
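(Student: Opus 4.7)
The plan is to reduce both assertions to the corresponding trace statements for the single-pole kernels and then integrate. Concretely, the two core sub-claims are:
(a) $\tr(K_\mu(\cdot,y))=\delta_y$ for every $y\in\partial\Omega\cup K$,
(b) $\tr(G_\mu(\cdot,y))=0$ for every $y\in\Omega\setminus K$.
Granted (a) and (b), Fubini applied to $\int_{\partial O_n}\gf\,\BBK_\mu[\nu]\,d\omega_{O_n}^{x_0}$ (resp. $\int_{\partial O_n}\gf\,\BBG_\mu[\tau]\,d\omega_{O_n}^{x_0}$) followed by dominated convergence in $y$ yields parts (i) and (ii) respectively.

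For (a), fix $y\in\partial\Omega\cup K$. Since $K_\mu(\cdot,y)$ is $L_\mu$-harmonic in $\Omega\setminus K$ and $x_0\in O_n$ for all large $n$, the defining property of $\omega_{O_n}^{x_0}$ together with property (iii) of Definition \ref{kernel} gives
\begin{equation*}
\int_{\partial O_n}K_\mu(x,y)\,d\omega_{O_n}^{x_0}(x)=K_\mu(x_0,y)=1,
\end{equation*}
so the measures $\xl_n:=K_\mu(\cdot,y)\,\omega_{O_n}^{x_0}$ (viewed on $\overline\Omega$ via extension by zero) form a family of Borel probability measures on the compact space $\overline\Omega$. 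I would then show $\xl_n\rightharpoonup \delta_y$ weakly. The ingredient is property (ii) of Definition \ref{kernel}: $K_\mu(\cdot,y)/\tilde W$ extends continuously to $\overline\Omega\setminus\{y\}$ and vanishes on $(\partial\Omega\cup K)\setminus\{y\}$; combined with the harmonic-measure comparison bounds for $\omega_{O_n}^{x_0}$ from Section 6 (which control the $\tilde W$-weighted mass of $\omega_{O_n}^{x_0}$ near the boundary), this implies that for any open neighborhood $V\ni y$ in $\overline\Omega$, $\xl_n(\overline\Omega\setminus V)\to 0$, giving weak convergence to $\delta_y$. Hence $\int_{\partial O_n}\gf\,K_\mu(\cdot,y)\,d\omega_{O_n}^{x_0}\to\gf(y)$ for every $\gf\in C(\overline\Omega)$, and Fubini with the uniform bound $\|\xl_n\|=1$ upgrades this to $\tr(\BBK_\mu[\nu])=\nu$.

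For (b), the two-sided estimates \eqref{Greenesta}--\eqref{Greenestb} combined with $\tilde W\geq 1$ and $\tilde W\approx d_K^{-\ap}$ near $K$ show, for each fixed $y\in\Omega\setminus K$,
\begin{equation*}
\lim_{\dist(x,\partial\Omega\cup K)\to 0}\frac{G_\mu(x,y)}{\tilde W(x)}=0.
\end{equation*}
Using this uniform vanishing together with continuity of $\gf\in C(\overline\Omega)$ and the Section~6 estimates on $\omega_{O_n}^{x_0}$ (which bound $\int_{\partial O_n}\tilde W\,d\omega_{O_n}^{x_0}$ uniformly in $n$), one obtains $\int_{\partial O_n}\gf\,G_\mu(\cdot,y)\,d\omega_{O_n}^{x_0}\to 0$ for each fixed $y$. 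Fubini then gives $\int_{\partial O_n}\gf\,\BBG_\mu[\tau]\,d\omega_{O_n}^{x_0}=\int_{\Omega\setminus K}\big(\int_{\partial O_n}\gf\,G_\mu(\cdot,y)\,d\omega_{O_n}^{x_0}\big)\,d\tau(y)$, and the proof is concluded by dominated convergence in $y$ once the inner integral is bounded, uniformly in $n$, by $C\|\gf\|_\infty\,\ei(y)$; this pointwise majorant (which makes the assumption $\tau\in\GTM(\Omega\setminus K;\ei)$ essential) is what ensures integrability.

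The main obstacle is the last uniform pointwise bound $\int_{\partial O_n}G_\mu(\cdot,y)\,d\omega_{O_n}^{x_0}\leq C\ei(y)$, together with the analogous uniform $\tilde W$-weighted control of $\omega_{O_n}^{x_0}$ near the singular set $K$ where $\tilde W$ blows up. Both hinge on the comparison between the $L_\mu$-harmonic measure $\omega_{O_n}^{x_0}$, the Green kernel, and the first eigenfunction $\ei$, as developed in Section~6.
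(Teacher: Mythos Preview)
Your plan is viable, but the route differs from the paper's, and the step you flag as the ``main obstacle'' is exactly what the paper's argument sidesteps. For (ii) the paper does \emph{not} reduce to single poles: assuming $\tau\geq 0$ and writing $u=\BBG_\mu[\tau]$, it uses the potential-theoretic splitting $u=\BBG_\mu^{O_n}[\tau|_{O_n}]+v_n$ in $O_n$, where $v_n$ is the $L_\mu$-harmonic extension of $u|_{\partial O_n}$. Since $G_\mu^{O_n}\nearrow G_\mu$, monotone convergence gives $\BBG_\mu^{O_n}[\tau|_{O_n}](x_0)\nearrow u(x_0)$, hence $v_n(x_0)=\int_{\partial O_n}u\,d\omega^{x_0}_{O_n}\to 0$; this already yields $\tr(u)=0$ without any dominated-convergence majorant. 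For (i) the paper simply cites the argument of Marcus--V\'eron.

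Your single-pole-plus-Fubini strategy can be made to work, but you should not leave the majorant to Section~6 in vague terms. The clean way to obtain it is the identity
\[
\int_{\partial O_n}G_\mu(\cdot,y)\,d\omega^{x_0}_{O_n}=G_\mu(x_0,y)-G_\mu^{O_n}(x_0,y)\qquad (y\in O_n),
\]
which is exactly the paper's decomposition for $\tau=\delta_y$. For $y\notin O_1$ (hence bounded away from $x_0$) this is $\leq G_\mu(x_0,y)\lesssim \ei(y)$ by the Green estimates. For $y\in O_1$ one cannot use $G_\mu(x_0,y)$ as a majorant (it blows up at $y=x_0$), but since $\dist(\partial O_n,O_1)\geq c>0$ the Green estimates give $G_\mu(x,y)\lesssim \ei(x)$ on $\partial O_n$, and because $\ei$ is $L_\mu$-superharmonic the harmonic extension of $\ei$ is $\leq\ei$, so the inner integral is $\leq C\ei(x_0)\leq C'\ei(y)$. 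This establishes your bound $\int_{\partial O_n}G_\mu(\cdot,y)\,d\omega^{x_0}_{O_n}\leq C\ei(y)$ uniformly in $n$, and the pointwise convergence to $0$ follows from $G_\mu^{O_n}(x_0,y)\nearrow G_\mu(x_0,y)$. Note that this is precisely the paper's monotone-convergence argument carried out fiberwise; the paper's version is shorter because it applies it once to the full $\tau$ rather than to each $\delta_y$ followed by Fubini. Your treatment of (i) via tightness of the probability measures $K_\mu(\cdot,y)\,\omega^{x_0}_{O_n}$ is correct and more explicit than the paper's bare citation; the needed ingredient $\sup_n\int_{\partial O_n}\tilde W\,d\omega^{x_0}_{O_n}<\infty$ is exactly Proposition~\ref{traceW}.
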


Theorem \ref{th:Rep} and Proposition \ref{traceKG} are key ingredients in obtaining the following properties of $L_\mu$-subharmonic and $L_\mu$-superharmonic functions.

\begin{theorem}\label{subsuperhar}
	(i) Let $u$ be a positive $L_\mu$-superharmonic function in the sense of distributions in $\Omega \setminus K$. Then $u \in L^1(\Omega;\ei)$ and there exist $\tau \in \GTM^+(\Omega \setminus K;\ei)$ and $\nu \in \GTM^+(\partial\Omega \cup K)$ such that
	\bel{reprweaksol} u=\BBG_{\mu}[\tau]+\BBK_{\mu}[\nu].
	\ee
	In particular, $u \geq \BBK_\mu[\nu]$ in $\Omega \setminus K$ and $\tr(u)=\nu$.
	
	(ii) Let $u$   be a positive $L_\mu$-subharmonic function  in the sense of distributions in $\Omega \setminus K$. Assume that there exists a positive $L_\mu$-superharmonic function $w$ such that $u \leq w$ in $\Omega \setminus K$. Then $u \in L^1(\Omega;\ei)$ and there exist $\tau \in \GTM^+(\Omega \setminus K;\ei)$ and $\nu \in \GTM^+(\partial\Omega \cup K)$ such that
	\bel{vGK2} u+\BBG_{\mu}[\tau]=\BBK_{\mu}[\nu].
	\ee
	In particular, $u \leq \BBK_\mu[\nu]$ in $\Omega \setminus K$ and $\tr(u)=\nu$.
\end{theorem}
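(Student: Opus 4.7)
The plan is to prove (i) via a Riesz-type decomposition of $u$ along a smooth exhaustion of $\Omega\setminus K$, then pass to the limit and invoke the Representation Theorem (Theorem \ref{th:Rep}) for the $L_\mu$-harmonic remainder; part (ii) will then follow by applying (i) to $w$ and to $v:=w-u$ and taking the difference. Proposition \ref{traceKG} will take care of the trace identifications throughout.

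\textbf{Part (i).} Since $u\ge 0$ and $-L_\mu u\ge 0$ in the sense of distributions, the distribution $\tau:=-L_\mu u$ is a positive Radon measure on $\Omega\setminus K$ (Schwartz). Fix a smooth exhaustion $\{O_n\}$ of $\Omega\setminus K$ as in Definition \ref{def:trace}. On each relatively compact regular subdomain $O_n$, $-L_\mu$ is a uniformly elliptic operator with bounded coefficients, so the classical Riesz decomposition yields
\[ u=\BBG_\mu^{O_n}[\tau\chi_{O_n}]+h_n\quad\text{in }O_n, \]
where $\BBG_\mu^{O_n}$ is the Dirichlet Green operator of $-L_\mu$ in $O_n$ and $h_n$ is the greatest $L_\mu$-harmonic minorant of $u$ in $O_n$. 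Domain monotonicity gives $G_\mu^{O_n}\uparrow G_\mu$, and hence by monotone convergence $\BBG_\mu^{O_n}[\tau\chi_{O_n}]\uparrow \BBG_\mu[\tau]$ pointwise in $\Omega\setminus K$. Consequently $h_n=u-\BBG_\mu^{O_n}[\tau\chi_{O_n}]$ is a non-negative decreasing sequence of $L_\mu$-harmonic functions whose pointwise limit $h:=u-\BBG_\mu[\tau]\ge 0$ is itself $L_\mu$-harmonic in $\Omega\setminus K$ (distributional convergence plus interior elliptic regularity and Harnack). Theorem \ref{th:Rep} then produces a unique $\nu\in\GTM^+(\partial\Omega\cup K)$ with $h=\BBK_\mu[\nu]$, giving $u=\BBG_\mu[\tau]+\BBK_\mu[\nu]$ and the inequality $u\ge \BBK_\mu[\nu]$. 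Taking $\tr$ of this identity and using Proposition \ref{traceKG} yields $\tr(u)=0+\nu=\nu$. The remaining integrabilities $u\in L^1(\Omega;\ei)$ and $\tau\in\GTM^+(\Omega\setminus K;\ei)$ are extracted by testing $-L_\mu u=\tau$ against the first eigenfunction $\ei$ along the exhaustion and using $-L_\mu\ei=\lambda_\mu\ei$ with $\lambda_\mu>0$.

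\textbf{Part (ii).} Set $v:=w-u$. Since $u$ is $L_\mu$-subharmonic and $w$ is $L_\mu$-superharmonic, $v\ge 0$ and $-L_\mu v=-L_\mu w+L_\mu u\ge 0$, so $v$ is itself a positive $L_\mu$-superharmonic function. Apply (i) to both $w$ and $v$:
\[ w=\BBG_\mu[\tau_w]+\BBK_\mu[\nu_w],\qquad v=\BBG_\mu[\tau_v]+\BBK_\mu[\nu_v], \]
and note that $\tau_v-\tau_w=L_\mu u=:\tau$ is a positive Radon measure in $\GTM^+(\Omega\setminus K;\ei)$. Subtracting the two representations gives $u=w-v=-\BBG_\mu[\tau]+\BBK_\mu[\nu_w-\nu_v]$, i.e. $u+\BBG_\mu[\tau]=\BBK_\mu[\nu]$ with $\nu:=\nu_w-\nu_v$; the bound $u\le \BBK_\mu[\nu]$ is immediate. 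Proposition \ref{traceKG} then forces $\nu=\tr(u)$, and the dynamic definition of the trace applied to the non-negative function $u$ ensures $\nu\ge 0$.

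\textbf{Main obstacle.} The delicate technical step is controlling the behavior at the exhaustion's boundary so that the Riesz decomposition survives the passage to the limit: namely, proving $u\in L^1(\Omega;\ei)$ together with $\int\ei\,d\tau<\infty$ and $\nu(\partial\Omega\cup K)<\infty$. This is handled by a carefully truncated test-function argument using $\ei$ with cutoffs near both $\partial\Omega$ and $K$ (needed because $\ei\approx d\,d_K^{-\am}$ vanishes on $\partial\Omega$ and, when $\mu>0$, blows up at rate $d_K^{-\am}$ near $K$), and by invoking the sharp two-sided bounds for $G_\mu$ and $K_\mu$ from Theorems \ref{Greenkernel}--\ref{Martin} to pass to the limit in $\BBG_\mu^{O_n}[\tau\chi_{O_n}]$ and in the harmonic remainder. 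It is precisely at this step that the standing hypothesis $\lambda_\mu>0$ is essential.
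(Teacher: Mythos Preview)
Your argument for part (i) follows essentially the same route as the paper: identify $\tau=-L_\mu u$ as a positive Radon measure, perform the Riesz decomposition on a smooth exhaustion $\{O_n\}$, use $G_\mu^{O_n}\uparrow G_\mu$ to obtain $0\le \BBG_\mu[\tau]\le u$, and apply Theorem~\ref{th:Rep} to the nonnegative $L_\mu$-harmonic remainder. The one place where the paper is noticeably more efficient is the integrability of $\tau$: rather than a cutoff/test-function argument against $\phi_\mu$ (which, as you observe, is delicate near both $\partial\Omega$ and $K$), the paper simply evaluates $\BBG_\mu[\tau]$ at the fixed reference point $x_0$ and uses the pointwise bound $G_\mu(x_0,\cdot)\gtrsim \phi_\mu$ (a direct consequence of \eqref{Greenesta}) to conclude $\int\phi_\mu\,d\tau\lesssim \BBG_\mu[\tau](x_0)\le u(x_0)<\infty$. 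This one-line argument replaces your entire ``main obstacle'' paragraph.

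For part (ii) you take a genuinely different path. The paper argues directly: once $\tau$ (with $-L_\mu u=-\tau$) is shown to lie in $\GTM^+(\Omega\setminus K;\phi_\mu)$ via $\BBG_\mu^{O_n}[\tau]\le w$, the function $u+\BBG_\mu[\tau]$ is itself a \emph{nonnegative} $L_\mu$-harmonic function, so Theorem~\ref{th:Rep} immediately produces $\nu\in\GTM^+(\partial\Omega\cup K)$. Your route---apply (i) to both $w$ and $v=w-u$ and subtract---is correct but more circuitous: it yields $\nu=\nu_w-\nu_v$ as a signed measure, and you then need the additional observation that $\tr(u)\ge 0$ (via Definition~\ref{def:trace} applied to the nonnegative $u$) to recover $\nu\ge 0$. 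The paper's direct approach gets positivity of $\nu$ for free and avoids invoking (i) twice.
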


We are ready to study the boundary value problem for linear equations.
\begin{definition} \label{weaksol-LP}
 Let $\tau\in\mathfrak{M}(\xO\setminus K;\ei)$ and $\nu \in \mathfrak{M}(\partial\xO\cup K)$. We say that $u$  is a weak solution of
\begin{equation}\label{NHLP} \left\{ \BAL
- L_\mu u&=\tau\qquad \text{in }\;\Omega \setminus K,\\
\tr(u)&=\xn,
\EAL \right. \end{equation}
if $u\in L^1(\Omega;\ei)$ and it satisfies
\be \label{lweakform}
	- \int_{\Omega} u L_{\xm }\zeta \, dx=\int_{\Omega \setminus K} \zeta \, d\tau - \int_{\Omega} \mathbb{K}_{\xm}[\xn]L_{\xm }\zeta \, dx
	\qquad\forall \zeta \in\mathbf{X}_\xm(\xO\setminus K),
	\ee
	where the space of test function ${\bf X}_\mu(\Gw\setminus K)$ is defined by
	\bel{Xmu} {\bf X}_\mu(\Gw\setminus K):=\{ \zeta \in H_{loc}^1(\Omega \setminus K): \phi_\mu^{-1} \zeta \in H^1(\Gw;\phi_\mu^{2}), \, \phi_\mu^{-1}L_\mu \zeta \in L^\infty(\Omega)  \}.
	\ee
\end{definition}

\begin{theorem} \label{Existence-LP} Assume $0\leq k<N-2$, $\mu \leq H^2$ and $\lambda_\mu>0$.

{\sc I. Existence and uniqueness.} For any $\tau \in \GTM(\Omega \setminus K;\ei)$ and $\nu \in \GTM(\partial \Omega \cup K)$, there exists a unique weak solution $u$ of \eqref{NHLP}.
The solution $u$ can be decomposed as in \eqref{reprweaksol}. In particular, $\BBG_\mu[\tau]$ is the weak unique solution of \eqref{NHLP} with $\nu=0$ and $\BBK_\mu[\nu]$ is the unique weak solution of \eqref{NHLP} with $\tau=0$.

{\sc II. A priori estimates.} There exists a positive constant $C=C(N,\Omega,K,\mu)$ such that
\begin{equation} \label{esti222}
\| u \|_{L^1(\Omega; \ei)} \leq \frac{1}{\lambda_\mu} \| \tau \|_{\GTM(\Omega \setminus K; \ei)} + C\| \nu \|_{\GTM(\partial \Omega \cup K)}.
\end{equation}
In addition, for any $\zeta \in {\bf X}_\mu(\Omega \setminus K)$ and $\zeta \geq 0$, the following estimates are valid
\be\label{est-L-1}
-\int_{\Omega}|u|L_\mu \zeta \, dx\leq \int_{\Omega \setminus K}\zeta d|\tau|-
\int_{\Omega}\BBK_\mu[|\nu|] L_\mu \zeta \, dx,
\ee
\be\label{est-L-2}
-\int_{\Omega}u_+ L_\mu \zeta \, dx\leq \int_{\Omega \setminus K}\zeta d\tau_+-
\int_{\Omega}\BBK_\mu[\nu_+] L_\mu \zeta \, dx.
\ee
\end{theorem}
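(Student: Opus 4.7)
\emph{Existence and representation.} My candidate is $u := \BBG_\mu[\tau] + \BBK_\mu[\nu]$. I first verify $u \in L^1(\Omega;\ei)$. For the Green part, Fubini combined with the identity $-L_\mu G_\mu(\cdot,y)=\delta_y$ paired against the eigenfunction equation $-L_\mu\ei=\lambda_\mu\ei$ yields
\[
\int_\Omega G_\mu(x,y)\,\ei(x)\,dx=\lambda_\mu^{-1}\,\ei(y),
\]
giving the sharp bound $\|\BBG_\mu[|\tau|]\|_{L^1(\Omega;\ei)}=\lambda_\mu^{-1}\|\tau\|_{\GTM(\Omega\setminus K;\ei)}$. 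For the Martin part, the kernel estimates of Theorem \ref{Martin} can be integrated against $\ei$ to yield $\|\BBK_\mu[|\nu|]\|_{L^1(\Omega;\ei)}\le C\|\nu\|_{\GTM(\partial\Omega\cup K)}$. The weak formulation \eqref{lweakform} then reduces, via the decomposition of $u$, to the Green representation identity $-\int_\Omega G_\mu(x,y)L_\mu\zeta(x)\,dx=\zeta(y)$ for each $\zeta\in\mathbf{X}_\mu$ (essentially Lemma \ref{existence1b}), integrated against $d\tau(y)$ by Fubini. Finally $\tr(u)=\nu$ is immediate from Proposition \ref{traceKG}, and the $L^1$-estimate \eqref{esti222} follows from the triangle inequality applied to the two bounds above.

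\emph{Uniqueness.} I argue by duality. Suppose $u$ satisfies \eqref{lweakform} with $\tau=0$ and $\nu=0$. Given $f\in C_c^\infty(\Omega\setminus K)$, set $\zeta:=-\BBG_\mu[f\ei]$; using the sharp Green kernel estimates of Theorem \ref{Greenkernel} together with interior elliptic regularity, one checks that $\zeta\in\mathbf{X}_\mu(\Omega\setminus K)$ and $L_\mu\zeta=-f\ei$. Plugging $\zeta$ into \eqref{lweakform} yields $\int_\Omega u\,f\ei\,dx=0$ for every such $f$, hence $u=0$ a.e.

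\emph{Kato-type inequalities.} I approximate the data by sequences $\tau_n\in L^\infty_c(\Omega\setminus K)$ with $|\tau_n|\to|\tau|$ narrowly in $\GTM(\Omega\setminus K;\ei)$, and $\nu_n\in C(\partial\Omega\cup K)$ with $|\nu_n|\to|\nu|$ weakly. The corresponding solutions $u_n=\BBG_\mu[\tau_n]+\BBK_\mu[\nu_n]$ are regular enough in $\Omega\setminus K$ for the classical Kato inequality
\[
-L_\mu|u_n|\le -\sign(u_n)\,L_\mu u_n=\sign(u_n)\,\tau_n\le|\tau_n|
\]
to hold distributionally. Setting $w_n:=\BBG_\mu[|\tau_n|]+\BBK_\mu[|\nu_n|]$, the maximum principle together with the identity $\tr(|u_n|)=|\nu_n|$ gives $w_n\ge|u_n|$, and $\sigma_n:=-L_\mu(w_n-|u_n|)$ is a positive bounded measure, whence
\[
|u_n|=\BBG_\mu[|\tau_n|-\sigma_n]+\BBK_\mu[|\nu_n|].
\]
Testing against $\zeta\ge0$ in $\mathbf{X}_\mu$ and discarding the nonnegative term $\int\zeta\,d\sigma_n$ yields \eqref{est-L-1} at the approximate level; passing $n\to\infty$ in the $L^1(\Omega;\ei)$ topology, using continuity of $\BBG_\mu$ and $\BBK_\mu$ on bounded measures, closes the proof. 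Estimate \eqref{est-L-2} follows analogously from the one-sided Kato inequality $-L_\mu u_+\le\chi_{\{u>0\}}\tau\le\tau_+$ and $\tr(u_+)=\nu_+$. The chief obstacle is verifying the trace identity $\tr(|u_n|)=|\nu_n|$ within the dynamic framework of Definition \ref{def:trace}; this demands simultaneous control from the sharp two-sided kernel estimates and from the mutual singularity of the Jordan decomposition, which must be preserved at the level of the approximations.
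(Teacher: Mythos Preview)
Your existence, representation, and uniqueness arguments are essentially the paper's: the paper also takes $u=\BBG_\mu[\tau]+\BBK_\mu[\nu]$ as candidate, obtains the $L^1(\ei)$ bound on the Green part via the eigenfunction identity (Lemma \ref{existence3}) and on the Martin part via the kernel estimates (Corollary \ref{cor:pcritical}), and proves uniqueness by testing against $\zeta\in\mathbf{X}_\mu$ with prescribed $L_\mu\zeta$.

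For the Kato inequalities \eqref{est-L-1}--\eqref{est-L-2}, however, your route diverges from the paper's and carries a real gap. The paper does \emph{not} approximate the data. It fixes $\tau,\nu$ and works on a smooth exhaustion $\{O_n\}$ of $\Omega\setminus K$, where $-L_\mu$ is uniformly elliptic; there it invokes the classical Kato inequality (from \cite[Proposition 1.5.9]{MVbook}) for the exact solution $u$, uses the pointwise bound $|u|\le \BBG_\mu[|\tau|]+\BBK_\mu[|\nu|]$ on $\partial O_n$ to control the boundary term, and then passes to the limit in $n$ with carefully constructed test functions $\zeta_n\in C_0^2(O_n)$ that converge to $\zeta$. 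No trace identity for $|u|$ is ever needed.

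Your approach instead approximates $(\tau,\nu)$ by $(\tau_n,\nu_n)$ and relies on $\tr(|u_n|)=|\nu_n|$ and $\tr((u_n)_+)=(\nu_n)_+$. You flag this as the ``chief obstacle'' but do not resolve it. While the trace identities are provable for continuous $\nu_n$ and compactly supported $\tau_n$ (since $u_n/\tilde W$ then extends continuously to $\partial\Omega\cup K$), the harder issue lies in the limit: you need the approximations to satisfy simultaneously $\tau_n\to\tau$, $|\tau_n|\to|\tau|$, $(\tau_n)_+\to\tau_+$ narrowly (and likewise for $\nu_n$), which forces the approximation to respect the Jordan decomposition --- delicate when the supports of $\nu_\pm$ are merely mutually singular, not separated. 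Moreover, narrow convergence of $\tau_n$ does not by itself give $\BBG_\mu[\tau_n]\to\BBG_\mu[\tau]$ in $L^1(\Omega;\ei)$; you would need additional equi-integrability input (e.g., weak-$L^p$ bounds on $\BBG_\mu$). The paper's exhaustion argument sidesteps all of this.
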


Estimates on the Green and Martin kernels and the theory for linear equations associated to $-L_\mu$ are crucial tools in the study of respective semilinear elliptic equations which will be presented in a forthcoming paper.
\medskip

\noindent \textbf{Organization of the paper.} In Section 2, we introduce main assumptions on $K$ and present the background of the eigen pair of $-L_\mu$. In section 3, we construct local sub and super $L_\mu$-harmonic functions and prove Harnack type inequality. Section 4 is devoted to the proof of Theorem \ref{Greenkernel}. In section 5, we establish the solvability and a priori estimate for linear equations with continuous boundary data. In section 6, we demonstrate Theorems \ref{Martin} and \ref{th:Rep}.  Finally, in section 7, we prove Proposition \ref{traceKG} and Theorems \ref{subsuperhar} and \ref{Existence-LP}. \medskip

\noindent \textbf{Notations.}

\begin{itemize}
\item The notation $A \gtrsim B$ (resp. $A \lesssim B$) means $A \geq c\,B$ (resp. $A \leq c\,B$) where the implicit $c$ is a positive constant depending on some initial parameters. If $A \gtrsim B$ and $A \lesssim B$, we write $A \approx B$. \textit{Throughout the paper, most of the implicit constants depend on some (or all) of the initial parameters such as $N,\Omega,K,k,\mu$ and we will omit these dependences in the notations (except when it is necessary).}

\item Let $\phi$ be a positive continuous function in $\Omega \setminus K$.
Denote by $\GTM(\Omega \setminus K;\phi)$ the space of Radon measures $\tau$ in $\Omega \setminus K$ such that $\int_{\Omega \setminus K}\phi d|\tau|<\infty$ and by $\GTM^+(\Omega \setminus K;\phi)$ its positive cone. We also denote by  $\GTM(\partial \Omega \cup K)$ the space of Radon measures on $\partial \Omega \cup K$ and by $\GTM^+(\partial \Omega \cup K)$ its positive cone.
\item For $a,b \in \BBR$, denote $a \wedge b = \min\{a,b\}$, $a \lor b =\max\{a,b \}$.
\item For $\beta>0$, $ \Omega_{\beta}=\{ x \in \Omega: d(x) < \beta\}$, $K_{\beta}=\{ x \in \R^N \setminus K:  d_K(x)<\beta \}$.
\item We denote by $c,c_1,C...$ the constants which depend on initial parameters and may change from one appearance to another.
\end{itemize} \medskip

\noindent \textbf{Acknowledgements.} P.-T. Nguyen is supported by Czech Science Foundation, project GJ19 -- 14413Y. Part of this research was carried out by P.-T. Nguyen during a visit at the Hausdorff Research Institute for Mathematics (HIM), through the Trimester Program ``Evolution of Interfaces". P.-T. Nguyen gratefully acknowledges the support of the HIM. The authors wish to thank Professor L. V\'eron for many useful comments which help to improve the manuscript.

\section{Preliminaries}

\subsection{Assumptions on $K$.} Throughout this paper, we assume that $K \subset \Omega$ is a $C^2$ compact submanifold in $\mathbb{R}^N$ without boundary, of dimension $k$, $0\leq k < N-2$. When $k = 0$ we assume that $K = \{0\} \subset \Omega$.

For $x=(x_1,...,x_k,x_{k+1},...,x_N) \in \R^N$, we write $x=(x',x'')$ where $x'=(x_1,..,x_k) \in \R^k$ and $x''=(x_{k+1},...,x_N) \in \R^{N-k}$. For $\beta>0$, we denote by $B_{\beta}^k(x')$ the ball  in $\R^k$ with center at $x'$ and radius $\beta.$ For any $\xi\in K$, we set
\begin{align} \nonumber  K_\beta &:=\{ x \in \R^N \setminus K: d_K(x) < \beta \}, \\
\label{Vxi}
V(\xi,\xb)&:=\{x=(x',x''): |x'-\xi'|<\beta,\; |x_i-\Gamma_i^\xi(x')|<\xb,\;\forall i=k+1,...,N\},
\end{align}
for some functions $\Gamma_i^\xi: \R^k \to \R$, $i=k+1,...,N$.

Since $K$ is a $C^2$ compact submanifold in $\mathbb{R}^N$ without boundary, we may assume the existence of $\xb_0$ such that the followings hold.

\begin{itemize}
\item $K_{6\beta_0}\Subset \Omega$ and for any $x\in K_{6\beta_0}$, there is a unique $\xi \in K$  satisfies $|x-\xi|=d_K(x)$.

\item $d_K \in C^2(K_{4\beta_0})$, $|\nabla d_K|=1$ in $K_{4\beta_0}$ and there exists $g\in L^\infty(K_{4\beta_0})$ such that
\be \label{laplaciand}
\Delta d_K(x)=\frac{N-k-1}{d_K(x)}+g(x) \quad \text{in } K_{4\beta_0} .
\ee
(See \cite[Lemma 2.2]{Vbook} and \cite[Lemma 6.2]{DN}.)

\item For any $\xi \in K$, there exist $C^2$ functions $\Gamma_i^\xi \in C^2(\R^k;\R)$, $i=k+1,...,N$, such that (upon relabeling and reorienting the coordinate axes if necessary), for any $\beta \in (0,6\beta_0)$, $V(\xi,\beta) \subset \Omega$ and
\begin{equation} \label{straigh}
V(\xi,\beta) \cap K=\{x=(x',x''): |x'-\xi'|<\beta,\;  x_i=\Gamma_i^\xi (x'), \; \forall i=k+1,...,N\}.
\end{equation}

\item There exist $\xi^{j}$, $j=1,...,m_0$, ($1 \leq m_0 \in \N$) and $\beta_1 \in (0, \beta_0)$ such that
\be \label{cover}
K_{2\xb_1}\subset \cup_{i=1}^{m_0} V(\xi^i,\beta_0)\Subset \Omega.
\ee
\end{itemize}

Now set
\bel{dist2} \xd_K^\xi(x):=\left(\sum_{i=k+1}^N|x_i-\Gamma_i^\xi(x')|^2\right)^{\frac{1}{2}}, \qquad x=(x',x'')\in V(\xi,4\beta_0).\ee

Then we see that there exists a constant $C=C(N,K)$ such that
\be\label{propdist}
d_K(x)\leq	\xd_K^{\xi}(x)\leq C \| K \|_{C^2} d_K(x),\quad \forall x\in V(\xi,2\beta_0),
\ee
where $\xi^j=((\xi^j)', (\xi^j)'') \in K$, $j=1,...,m_0$, are the points in \eqref{cover} and
\begin{equation} \label{supGamma}
\| K \|_{C^2}:=\sup\{  || \Gamma_i^{\xi^j} ||_{C^2(B_{5\beta_0}^k((\xi^j)'))}: \; i=k+1,...,N, \;j=1,...,m_0 \} < \infty.
\end{equation}
Moreover, $\beta_1$ can be chosen small enough such that for any $x \in K_{\beta_1}$,
\bel{BinV} B(x,\beta_1) \subset V(\xi,\beta_0),
\ee
where $\xi \in K$ satisfies $|x-\xi|=d_K(x)$.

\subsection{Eigenvalue of $-L_\mu$} \label{subsect:eigen}
We recall that $0 \leq k <N-2$ and
$$H=\frac{N-k-2}{2}, \quad \am:=H-\sqrt{H^2-\mu}, \quad \ap:=H+\sqrt{H^2-\mu}.
$$

We summarize below main properties of the first eigenfunction of the operator $-L_\mu$ in $\Omega \setminus K$ from \cite[Lemma 2.4 and Theorem 2.6]{DD1} and \cite[page 337, Lemma 7 and Theorem 5]{DD2}.

(i) For any $\mu \leq H^2$, it is known that
\be\label{Lin01} \lambda_\mu:=\inf\left\{\int_{\Gw}\left(|\nabla u|^2-\frac{\xm }{d_K^2}u^2\right)dx: u \in C_c^1(\Omega), \int_{\Gw} u^2 dx=1\right\}>-\infty.
\ee

\smallskip

(ii) If $\mu < H^2$, there exists a minimizer $\gf_{\xm }$ of \eqref{Lin01} belonging to $H^1_0(\Gw)$. Moreover, it satisfies $-L_\mu \phi_\mu= \lambda_\mu \phi_\mu$  in $\Omega \setminus K$ and
\be\label{Lin1}
\gf_{\xm }\approx d_K^{-\am} \quad  \text{in } K_{\xb_0}.
\ee

\smallskip

(iii) If $\xm =H^2$, there is no minimizer of \eqref{Lin01} in $H_0^1(\Gw)$, but there exists a nonnegative function $\phi_{H^2}\in H_{loc}^1(\xO)$  such that $-L_{H^2}\phi_{H^2}=\lambda_{H^2}\phi_{H^2}$ in the sense of distributions in $\Omega \setminus K$ and
\be\label{Lin2}\phi_{H^2}\approx d_K^{-H}  \quad  \text{in } K_{\xb_0}. \ee
In addition, the function $\psi_{H^2}=d_K^{-H}\xf_{H^2}$ belongs to $H^1_0(\Gw; d_K^{-2H}).$

From \eqref{Lin1} and \eqref{Lin2} we deduce that
\be \label{eigenfunctionestimates}
\xf_\xm \approx d\,d^{-\am}_K \quad \text{in } \Omega \setminus K.
\ee

\subsection{Weighted Sobolev spaces on $\Omega \setminus K$}
It is known that if $p < N-k$ then $W^{1,p}_0(\xO\setminus K)=W_0^{1,p}(\xO)$ (see e.g. \cite{BFT}). Next we give some properties of the spaces of test functions.
\begin{proposition} \label{test} (i) $H_0^1(\xO\setminus K; d^2d_K^{-2\am})=H^1(\xO; d^2d_K^{-2\am})$. In particular, $C_0^\infty(\Omega \setminus K)$ is dense in $H^1(\Omega;d^2d_K^{-2\am})$.

(ii) $H_0^1(\xO\setminus K; d_K^{-2\am})=\{u\in H^1(\xO; d_K^{-2\am}):u|_{\partial\xO}=0\},$
	where $u |_{\partial\xO}$ denotes the Sobolev trace of $u$ on $\partial\xO$.
\end{proposition}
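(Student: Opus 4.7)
The inclusions $\supseteq$ will be immediate: for (i) from the definition of $H^1_0(\Omega\setminus K;\,\cdot\,)$, and for (ii) from the fact that the weight $d_K^{-2\am}$ is bounded on a boundary strip $\Omega_\beta=\{d<\beta\}$ with $\beta$ small (since $K\Subset\Omega$), so on this strip $H^1(\Omega;d_K^{-2\am})$ coincides with the standard $H^1$ and the usual Sobolev trace on $\partial\Omega$ is continuous, forcing any limit of functions in $C_0^\infty(\Omega\setminus K)$ to have trace zero on $\partial\Omega$. The content of the proposition is thus the density of $C_0^\infty(\Omega\setminus K)$ in each target space, which I plan to prove by a two-stage cutoff followed by mollification.

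For Step~A (cutoff near $K$), given $u$ in the target space I will introduce a smooth cutoff $\chi_\delta$ with $\chi_\delta=0$ on $K_\delta$, $\chi_\delta=1$ outside $K_{2\delta}$, and $|\nabla\chi_\delta|\lesssim 1/\delta$, and show $u\chi_\delta\to u$ in the weighted norm. The only nontrivial term in the norm of the error is
\begin{equation*}
\int u^2 |\nabla\chi_\delta|^2\,\omega\,dx \;\lesssim\; \int_{K_{2\delta}\setminus K_\delta} u^2\, d_K^{-2}\,\omega\,dx,
\end{equation*}
where $\omega$ denotes the weight in question. Dominated convergence will drive this to $0$ once $u^2 d_K^{-2}\omega$ is globally integrable on a fixed tubular neighborhood of $K$. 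This integrability is exactly the weighted Hardy inequality around the codimension-$(N-k)$ submanifold $K$ (recall $N-k\geq 3$), which in the subcritical regime $\am<H$ reads
\begin{equation*}
\int v^2 d_K^{-2\am-2}\,dx \;\lesssim\; \int |\nabla v|^2 d_K^{-2\am}\,dx + \int v^2 d_K^{-2\am}\,dx
\end{equation*}
for $v\in C_0^\infty(K_{2\beta_0}\setminus K)$; I will establish this by flattening $K$ using the charts from Section~2 and integrating by parts via \eqref{laplaciand}. I then apply it with $v=u\,d$ for part~(i) and $v=u$ for part~(ii).

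For Step~B (cutoff near $\partial\Omega$), part~(i) is handled by the ``heavy'' $d^2$ factor: using a logarithmic cutoff $\eta_\epsilon$ equal to $0$ on $\{d<\epsilon\}$ and $1$ on $\{d>\sqrt{\epsilon}\}$, linear in $\log d$ in between, one has $|\nabla\eta_\epsilon|\lesssim(d|\ln\epsilon|)^{-1}$, so
\begin{equation*}
\int u^2|\nabla\eta_\epsilon|^2\, d^2\, d_K^{-2\am}\,dx \;\lesssim\; |\ln\epsilon|^{-2}\int_{\Omega_\beta} u^2\,dx,
\end{equation*}
since $d_K^{-2\am}$ is bounded on $\Omega_\beta$. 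The right-hand side tends to $0$ because the classical boundary Hardy inequality for $H^1(\Omega;d^2)$, combined with the local equivalence $d^2 d_K^{-2\am}\approx d^2$ on $\Omega_\beta$, forces $u\in L^2(\Omega_\beta)$. For part~(ii), the weight is comparable to Lebesgue measure on $\Omega_\beta$, so the hypothesis $u|_{\partial\Omega}=0$ puts $u$ in $H^1_0$ near $\partial\Omega$ and a standard approximation supplies the cutoff. After Steps~A and~B the approximant has compact support in $\Omega\setminus K$, and a final mollification produces the desired sequence in $C_0^\infty(\Omega\setminus K)$.

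The hardest point will be the critical case $\mu=H^2$ (i.e.\ $\am=H$): the standard Hardy inequality of Step~A degenerates, and I will replace it with the log-Hardy refinement $\int v^2 d_K^{-2H-2}|\ln d_K|^{-2}\,dx \lesssim \int |\nabla v|^2 d_K^{-2H}\,dx$, together with a matching logarithmic-in-$d_K$ cutoff so that the logarithmic defect is absorbed. This is the technical heart of the argument; once it is in place the two-stage scheme closes uniformly for all $\mu\leq H^2$.
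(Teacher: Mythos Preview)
The paper's own proof is a one-line citation of \cite[Theorem~3.6]{FMoT} with a specific choice of parameters; your proposal is a self-contained proof, and the two-stage cutoff scheme you outline (weighted Hardy near $K$, boundary cutoff near $\partial\Omega$, logarithmic refinement at $\am=H$) is precisely the mechanism behind that cited theorem. So the mathematics is the same; you are simply unpacking what the paper outsources. Your handling of Step~B is fine in both parts, and the log-Hardy plan for the critical case is the right idea.

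There is, however, one circularity you should close in Step~A. You state the Hardy inequality for $v\in C_0^\infty(K_{2\beta_0}\setminus K)$ and then write ``apply it with $v=u\,d$'' (or $v=u$), where $u$ is a general element of the weighted $H^1$ space. But density of $C_0^\infty(\Omega\setminus K)$ in that space is exactly what you are trying to prove, so you cannot yet pass from compactly supported test functions to $u$. The standard fix is to prove the Hardy bound directly for functions that need not vanish near $K$: integrate by parts on $\{d_K>\epsilon\}$, use \eqref{laplaciand} to identify $\div(d_K^{-2\am-1}\nabla d_K)=(2H-2\am)\,d_K^{-2\am-2}+O(d_K^{-2\am-1})$, and check that the boundary term on $\{d_K=\epsilon\}$ is nonnegative (or $o(1)$) as $\epsilon\to 0$; this succeeds because the effective transverse dimension $N-k-2\am=2(H-\am)+2$ exceeds $2$ in the subcritical case, while in the critical case the analogous computation produces the $|\ln d_K|^{-2}$ factor you anticipate. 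One first does this for bounded smooth $v$ on $\overline{K_{2\beta_0}}$ and then passes to general $u$ by truncation $u_M=(\,|u|\wedge M\,)\mathrm{sign}\,u$ and Fatou. Once Hardy holds on the full weighted space, your dominated-convergence argument in Step~A goes through cleanly.
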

\begin{proof} (i) By applying \cite[Theorem 3.6]{FMoT} with $d_1=d,\;\xa_1=1,d_{n-k}=d_K,\xa_{n-k}=-\am$ and $\xa_i=0$ for all $i=\overline{1,n}$, $i\neq1,n-k,$ we can prove (i).

(ii) In view of the proof of  \cite[Theorem 3.6]{FMoT} and (i), we obtain the desired result.
\end{proof}

\section{$L_\mu$-sub and super harmonic functions}
In the sequel, we assume that $0 \leq k < N-2$ and $\mu \leq H^2$. In this subsection, we construct $L_\xm$-subharmonic and $L_\mu$-superharmonic functions which are defined in a ``small" neighborhood of $K.$ Let us give first the definition of $L_\xm$-harmonic function.
\begin{definition}
	Let $G\subset\xO\setminus K$ be open. We say that a function $u$ is $L_\mu$-subharmonic in $G$ if $u \in  H^1_{loc}(G)\cap C(G)$ and
	\bel{def:subhar}
	\int_G\nabla u \cdot \nabla\psi dx-\xm \int_G\frac{u \psi}{d^2_K} dx\leq0\qquad\forall\psi\in H_c^1(G), \;\psi\geq 0
	\ee
	where $H_c^{1}(G)$ denotes the subspace of $H^1(G)$ of functions with compact support in $G$.
	
	Similarly, a function $u$ is  $L_\mu$-superharmonic  in $G$ if $u \in  H^1_{loc}(G)\cap C(G)$ and $u$ satisfies \eqref{def:subhar} with
	with $``\leq"$ replaced by $``\geq"$.
	
	A function $u$ is $L_\mu$-harmonic in $G$ if $u$ is $L_\mu$-subharmonic and $L_\mu$-superharmonic in $G$.
\end{definition}

\begin{lemma}[Local $L_\mu$-subharmonic functions and $L_\mu$-superharmonic functions] \label{locsubsuper} Let $0<\varepsilon<1$. Then there exists $\xb=\xb(K,N,\xm,\xe)>0$ small enough such that the followings hold in $K_\xb$.
	
	(i) If $\mu \in (-\infty,H^2)$ then
	\begin{align}
	\BAL
	\eta_{\am,\varepsilon}:=d_K^{-\am}-d_K^{-\am+\xe} &\geq 0,& \quad&  \zeta_{\am,\varepsilon}:=d_K^{-\am}+d_K^{-\am+\xe},&\\
	-L_\xm  \eta_{\am,\varepsilon}&\geq0,& \quad& -L_\xm  \zeta_{\am,\varepsilon}\leq0.&
	\EAL\label{supsol1}
	\end{align}
	
	(ii) If $\mu \in (-\infty,H^2)$ and $\xe\in (0,\min\{1,2\sqrt{H^2-\mu}\})$ then
	\begin{align}
	\BAL
	\eta_{\ap,\varepsilon}:=d_K^{-\ap}+d_K^{-\ap+\xe}  &,& \quad& \zeta_{\ap,\varepsilon}:=d_K^{-\ap}-d_K^{-\ap+\xe} \geq 0,&\\
	-L_\xm  \eta_{\ap,\varepsilon}\geq 0&,& \quad&-L_\xm  \zeta_{\ap,\varepsilon}\leq 0.&\label{supsol2}
	\EAL
	\end{align}
	
	(iii)
	\begin{align}
	\BAL
	{\xz_{\scaleto{+}{3pt},\varepsilon}}:=(-\ln d_K)d_K^{-H}-d_K^{-H+\xe} &\geq 0,& \quad&{\xz_{\scaleto{-}{3pt},\varepsilon}}:=(-\ln d_K)d_K^{-H}+d_K^{-H+\xe} ,&\\
	-L_{ H^2} {\xz_{\scaleto{+}{3pt},\varepsilon}}&\geq0,& \quad& -L_{H^2}{\xz_{\scaleto{-}{3pt},\varepsilon}}\leq 0.&
	\EAL
	\end{align}\label{subsup}
	
\end{lemma}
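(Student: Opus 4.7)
\medskip
\noindent\textbf{Proof plan.} The plan is a direct calculation, exploiting the formula \eqref{laplaciand} for $\Delta d_K$ and the fact that $\alpha_\pm$ are the roots of the indicial equation $\alpha^2 - 2H\alpha + \mu = 0$. For any exponent $\gamma \in \mathbb{R}$, the chain rule together with $|\nabla d_K|=1$ yields, in $K_{4\beta_0}$,
\[
\Delta(d_K^{-\gamma}) = \gamma(\gamma+1)d_K^{-\gamma-2} - \gamma d_K^{-\gamma-1}\Bigl(\tfrac{N-k-1}{d_K}+g\Bigr) = \gamma(\gamma-2H)\,d_K^{-\gamma-2} - \gamma g\, d_K^{-\gamma-1},
\]
where I used $N-k-1 = 2H+1$. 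Adding the Hardy term gives
\[
L_\mu(d_K^{-\gamma}) = \bigl(\gamma^2 - 2H\gamma + \mu\bigr)\,d_K^{-\gamma-2} - \gamma g\, d_K^{-\gamma-1}.
\]
In particular $L_\mu(d_K^{-\alpha_\pm}) = -\alpha_\pm g\, d_K^{-\alpha_\pm-1}$, a lower-order remainder, while for the perturbed exponent $\alpha_\mp \mp \varepsilon$ I compute
\[
(\alpha_\mp \mp\varepsilon)^2 - 2H(\alpha_\mp\mp\varepsilon)+\mu = \varepsilon\bigl(\varepsilon \pm 2\sqrt{H^2-\mu}\bigr),
\]
which is the leading coefficient $\varepsilon(\varepsilon\pm 2\sqrt{H^2-\mu})\,d_K^{-\alpha_\mp \pm \varepsilon-2}$ dominating the bounded $g$-term for $d_K$ small.

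For part (i), using the identity above with $\gamma = \alpha_-$ and $\gamma = \alpha_--\varepsilon$, the quantity $-L_\mu \eta_{\alpha_-,\varepsilon}$ is of the form
\[
d_K^{-\alpha_-+\varepsilon-2}\Bigl[\varepsilon(\varepsilon + 2\sqrt{H^2-\mu}) + O(d_K^{1-\varepsilon})\Bigr],
\]
so choosing $\beta$ small enough (depending on $\|g\|_{L^\infty}$, $\varepsilon$, and $\sqrt{H^2-\mu}$) makes the bracket positive throughout $K_\beta$; the sign for $\zeta_{\alpha_-,\varepsilon}$ is opposite for the same reason. The nonnegativity $\eta_{\alpha_-,\varepsilon}\geq 0$ follows from $d_K^\varepsilon \leq 1$, which again only requires $\beta<1$. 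Part (ii) is entirely analogous with $\gamma = \alpha_+$: the constraint $\varepsilon<\min\{1,2\sqrt{H^2-\mu}\}$ ensures that $\varepsilon(\varepsilon - 2\sqrt{H^2-\mu})<0$, so the signs flip in the combinations $\eta_{\alpha_+,\varepsilon} = d_K^{-\alpha_+}+d_K^{-\alpha_++\varepsilon}$ and $\zeta_{\alpha_+,\varepsilon}=d_K^{-\alpha_+}-d_K^{-\alpha_++\varepsilon}$ to produce, respectively, an $L_\mu$-superharmonic and an $L_\mu$-subharmonic function in $K_\beta$.

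For part (iii), corresponding to the critical case $\mu=H^2$ where the two roots merge, the natural second solution of the indicial equation is logarithmic. A direct computation using $N-k-1 = 2H+1$ and the product/chain rule gives
\[
L_{H^2}\bigl((-\ln d_K)d_K^{-H}\bigr) = g\,d_K^{-H-1}\bigl(H\ln d_K - 1\bigr),
\]
i.e.\ the principal (logarithmic) terms cancel exactly, leaving only a lower-order remainder. Meanwhile $L_{H^2}(d_K^{-H+\varepsilon}) = \varepsilon^2 d_K^{-H+\varepsilon-2} + O(d_K^{-H+\varepsilon-1})$, so once again combining $(-\ln d_K)d_K^{-H}$ with $\pm d_K^{-H+\varepsilon}$ gives functions whose $-L_{H^2}$ is dominated, for $d_K\leq\beta$ small enough, by the sign of the $\pm\varepsilon^2 d_K^{-H+\varepsilon-2}$ term. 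The nonnegativity of $\zeta_{+,\varepsilon}$ reduces to checking $-\ln d_K \geq d_K^\varepsilon$, which holds for $d_K$ sufficiently small.

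The only mildly delicate point is bookkeeping the competing powers: one must verify that after factoring out the largest singular factor $d_K^{-\gamma_{\min}-2}$, the bounded contribution of $g$ enters with a strictly higher power of $d_K$, so that shrinking $\beta$ kills it uniformly. Once the identity for $L_\mu(d_K^{-\gamma})$ and its logarithmic analogue are in hand, the six inequalities reduce to elementary sign checks, and the threshold $\beta = \beta(K,N,\mu,\varepsilon)$ is determined by $\|g\|_{L^\infty(K_{4\beta_0})}$, $\varepsilon$, and (except in the critical case) $\sqrt{H^2-\mu}$.
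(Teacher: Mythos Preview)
Your proposal is correct and follows essentially the same approach as the paper: both compute $L_\mu(d_K^{-\gamma})$ directly from \eqref{laplaciand}, observe that the indicial polynomial $\gamma^2-2H\gamma+\mu$ vanishes at $\gamma=\alpha_\pm$ and equals $\varepsilon(\varepsilon\pm 2\sqrt{H^2-\mu})$ at the perturbed exponent, and then absorb the bounded $g$-remainder by shrinking $\beta$; the logarithmic case (iii) is handled identically in both. Your write-up is slightly more streamlined in factoring out the dominant power, but the argument is the same.
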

\begin{proof}
	For $b \neq 0$, by \eqref{laplaciand} and by taking into account that $|\nabla d_K|=1$ in $K_{3\beta_0}$, we have
	\bel{laplacianH}
	-L_\mu (d_K^b)=-(b^2+2bH+\mu) d_K^{b-2}-bd_K^{b-1}g \quad \text{in } K_{3\beta_0}.
	\ee
	In particular, since $\am$ and $\ap$ are the roots of the equation $\ga^2 - 2H\ga + \mu=0$, we obtain
	\bel{lum1}
	\BAL
	-L_\mu (d_K^{-\am}) &= \am d_K^{-(\am+1)}g, \quad \quad -L_\mu (d_K^{-\ap}) = \ap d_K^{-(\ap+1)}g,\\
	-L_\mu (d_K^{-\am+\varepsilon}) &=-\varepsilon(2H-2\am + \varepsilon)d_K^{-(\am-\varepsilon+2)} + (\am-\varepsilon)d_K^{-(\am-\varepsilon+1)}g, \\
	-L_\mu (d_K^{-\ap+\varepsilon}) &=-\varepsilon(2H-2\ap + \varepsilon)d_K^{-(\ap-\varepsilon+2)} + (\ap-\varepsilon)d_K^{-(\ap-\varepsilon+1)}g.
	\EAL
	\ee
	(i) Let $ \eta_{\am,\xe}=d_K^{-\am}-d_K^{-\am+\varepsilon}$ then by \eqref{lum1} we obtain
	\begin{align}\nonumber
	-L_\mu  \eta_{\am,\xe}
	%&=\xe(2H-2\am+\varepsilon)d_K^{-(\am-\xe+2)}+d_K^{-(\am+1)}[\am -(\am-\xe)d_K^{\varepsilon}]g\\ \nonumber
	\geq \xe(2H-2\am+\varepsilon)d_K^{-(\am-\xe+2)}-d_K^{-(\am+1)}[\am -(\am-\xe)d_K^{\varepsilon}]\|g\|_{L^\infty(K_{3\beta_0})}.
	\end{align}
	Since $\am<H$, for any $\varepsilon \in (0,1)$, there exists $\xb=\xb(K,N,\xm,\xe)$ such that $ \eta_{\am,\xe}\geq0$ in  $K_\xb$ and $-L_\mu \eta_{\am,\varepsilon} \geq 0$ in $K_\beta$.
	
	Next let $ \zeta_{\am,\varepsilon}=d_K^{-\am}+d_K^{-\am+\varepsilon}$. From \eqref{lum1}, we derive
	\begin{align*}
	-L_\mu  \xz_{\am,\xe}
	%=-\xe(2H-2\am+\varepsilon)d_K^{-(2+\am-\xe)}+d_K^{-(\am+1)}[\am +(\am-\xe)d_K^{\xe}]g\\
	\leq -\xe(2H-2\am+\varepsilon)d_K^{-(2+\am-\xe)}+d_K^{-(2+\am-\xe)}+d_K^{-(\am+1)}[\am +(\am-\xe)d_K^{\xe}]||g||_{L^\infty(K_{3\xb_0})}.
	\end{align*}
	Since $\am<H$, we deduce that for any $0<\xe<1$ there exists $\xb=\xb(K,N,\xm,\xe)$ such that $-L_\mu \xz_{\am,\xe} \leq 0$ in $K_\beta$. \medskip
	
	(ii) We use a similar argument as above and the fact that $\ap > H$ to obtain \eqref{supsol2}. \medskip
	
	(iii) Assume that $\mu=H^2$. Let $0<\beta<\min\{\beta_0,\frac{1}{2}\}$ then $d_K(x)<\frac{1}{2}$ for any $x\in K_\beta$. Set $\tilde{\zeta}=(-\ln d_K)d_K^{-H}.$
	Then by straightforward calculations we have
	\begin{align}\nonumber
	\xD \tilde \zeta
	%&=(2H+1) d^{-H-2}_K+H(H+1)(-\ln d_K )d_K^{-H-2}-(1+H(-\ln d_K))d_K^{-H-1}\xD d_K\\ %\nonumber
	%&=-H^2(-\ln d_K )d_K^{-H-2}-(1+H(-\ln d_K))d_K^{-H-1}(\xD d_K-\frac{N-k-1}{d_K})\\
	=-H^2(-\ln d_K )d_K^{-H-2}-(1+H(-\ln d_K))d_K^{-H-1}g.\label{ole}
	\end{align}
	By the above equality and \eqref{laplacianH}, we have
	\begin{align}
	-L_{H^2}{\xz_{\scaleto{+}{3pt},\varepsilon}}=\xe^2d^{-(2+H-\xe)}+g((1+H(-\ln d_K))d_K^{-H-1}-(H-\xe)d_K^{-(H-\xe+1)}) \geq 0,
	%&\geq \xe^2d^{-(2+H-\xe)}-||g||_{L^\infty(K_{3\xb_0})}((1+H(-\ln %d_K))d_K^{-H-1}-(H-\xe)d_K^{-(H-\xe+1)})\geq0,
	\end{align}
	provided $0<\xe<1$ and $\xb$ are small enough. The rest of the proof is similar to that of statements (i),(ii) and we omit it.
\end{proof}

Let $W$ and $\tilde W$ be as in \eqref{W} and \eqref{tildeW} respectively. Let us state now the boundary Harnack inequality.
\begin{lemma} \label{lemharnack}
	Let $\xb>0$ be the constant in Lemma \ref{subsup}, $\xi \in\partial \Omega \cup K$ and $0<r<\frac{\xb}{2}.$ We assume that $u\in H^1_{loc}(B_{r}(\xi)\cap(\xO\setminus K))\cap C(B_r(\xi)\cap(\xO\setminus K))$ is an $L_\xm$-subharmonic in $B_{r}(\xi)\cap(\xO\setminus K)$ and
	\bel{bdrcond} \lim_{\dist(x,F)\to 0}\frac{u_+(x)}{\tilde W(x)}=0,\quad\forall\; \text{compact} \; F\subset B_r(\xi)\cap (\partial \Omega \cup  K),
	\ee
	where $u_+=\max\{0,u\}$.
	Let $\eta$ be a smooth function such that  $0\leq\eta_r\leq1$, $\supp \eta \Subset B_{\frac{r}{2}}(\xi)$, $\eta_r=1$ on $B_{\frac{r}{4}}(\xi)$. Then
	\bel{regu1} \frac{\eta_ru_+}{\xf_\xm}\in H^1_0(\xO\setminus K;\xf_\xm^2),
	\ee
	and
	\bel{regu2}\sup_{x\in B_{\frac{r}{4}}(\xi) \cap (\Omega \setminus K)}\frac{u_+(x)}{\xf_\xm(x)}\leq C,
	\ee
	where $C=C(u,\xO,K,r)>0$. Furthermore, if $u$ is nonnegative $L_\mu$-harmonic then there exists $c=c(\Omega,K,N,k)>0$ such that
	\be \label{harnack}
	\frac{u(x)}{\xf_\xm(x)}\leq c\frac{u(y)}{\xf_\xm(y)},\quad\forall x,y\in B_{\frac{r}{16}}(\xi)\cap(\xO\setminus K).
	\ee
\end{lemma}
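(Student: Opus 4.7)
The strategy is to use the classical \emph{ground-state transformation} $v := u/\phi_\mu$, which converts the Schr\"odinger operator $L_\mu$ into a divergence-form operator with weight $\phi_\mu^2$, and then appeal to weighted elliptic regularity theory. First I would record the key algebraic identity. For $u=\phi_\mu v$, the eigenvalue relation $-L_\mu \phi_\mu =\lambda_\mu \phi_\mu$ together with a Leibniz computation gives
\[
L_\mu u \;=\; \phi_\mu^{-1}\,\mathrm{div}(\phi_\mu^{2}\nabla v) \;-\; \lambda_\mu\,\phi_\mu\,v
\qquad \text{in }\ B_r(\xi)\cap(\Omega\setminus K).
\]
Thus $L_\mu u\ge 0$ in the weak sense is equivalent, after dividing by $\phi_\mu$, to $v$ being a weak subsolution of $-\mathrm{div}(\phi_\mu^{2}\nabla v)+\lambda_\mu \phi_\mu^{2}v \le 0$. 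Since truncations of weak subsolutions remain weak subsolutions, the same inequality holds for $v_+=u_+/\phi_\mu$. By \eqref{eigenfunctionestimates}, the weight $\phi_\mu^{2}\approx d^{\,2}d_K^{-2\am}$ is locally an $A_2$-weight in $\Omega\setminus K$, so the Fabes--Kenig--Serapioni theory of weighted Sobolev/Poincar\'e inequalities and Moser iteration is available.

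Next I would upgrade the qualitative hypothesis \eqref{bdrcond} into the $H^1$-membership \eqref{regu1}. The idea is to construct an $L_\mu$-superharmonic majorant of $u_+$ of order $\phi_\mu$. Since $u_+/\tilde W\to 0$ on compact subsets of $(\partial\Omega\cup K)\cap B_r(\xi)$, the local $L_\mu$-superharmonic barriers $\eta_{\am,\varepsilon}\approx d_K^{-\am}$ (for $\mu<H^2$) and $(-\ln d_K)d_K^{-H}-d_K^{-H+\varepsilon}\approx (-\ln d_K)d_K^{-H}$ (for $\mu=H^2$) supplied by Lemma~\ref{locsubsuper}, combined with a standard distance-based superharmonic function near $\partial\Omega$, yield an $L_\mu$-superharmonic majorant of order $\phi_\mu$ on $B_{r/2}(\xi)\cap(\Omega\setminus K)$. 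A weak maximum principle applied to $u_+$ minus a small multiple of this majorant, together with the quantitative form of the vanishing hypothesis, delivers both $u_+\le C\phi_\mu$ and the appropriate decay of $v_+$ along $(\partial\Omega\cup K)\cap\overline{B_{r/2}(\xi)}$. By Proposition~\ref{test}(i), this decay, after multiplication by the cutoff $\eta_r$, produces an element of $H_0^1(\Omega\setminus K;\phi_\mu^2)$, establishing \eqref{regu1}.

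With \eqref{regu1} in hand, I would derive the sup bound \eqref{regu2} by weighted Moser iteration: testing the subsolution inequality against $\psi=\eta_r^{2}v_+^{2\kappa-1}$ for $\kappa>1$, using the $\phi_\mu^2$-weighted Sobolev inequality to step up integrability, and iterating up to the $L^\infty$-bound on $B_{r/4}(\xi)\cap(\Omega\setminus K)$. When $u$ is, in addition, $L_\mu$-harmonic, $v$ and $-v$ are both weak subsolutions of the weighted equation, so the weighted interior Harnack principle of Fabes--Kenig--Serapioni applied on the interior ball $B_{r/16}(\xi)\cap(\Omega\setminus K)\subset B_{r/4}(\xi)\cap(\Omega\setminus K)$ (where the sup bound is already available) produces \eqref{harnack}.

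The principal technical obstacle will be the second step: converting the qualitative condition $u_+/\tilde W\to 0$ into the quantitative bound $u_+\le C\phi_\mu$ and the weighted $H_0^1$-membership. Near $K$ one has $\tilde W/\phi_\mu\approx d_K^{-\ap+\am}/d$, which blows up, so the hypothesis does not directly control $u_+/\phi_\mu$; the barriers of Lemma~\ref{locsubsuper} must be patched together carefully to produce a superharmonic majorant of precisely the order of $\phi_\mu$, and the $\partial\Omega$-contribution must be incorporated simultaneously. The critical case $\mu=H^2$, where $\phi_{H^2}\notin H^1_0(\Omega)$ and the barrier acquires a logarithmic factor, will require separate treatment.
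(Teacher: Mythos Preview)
Your overall architecture---ground-state transform $v=u/\phi_\mu$, weighted Moser iteration, then weighted Harnack---matches the paper. The genuine gap is in your second step, and you have essentially identified it yourself without resolving it.

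You propose to obtain $u_+\le C\phi_\mu$ by building an $L_\mu$-superharmonic majorant \emph{of order $\phi_\mu$} from the barrier $\eta_{\am,\varepsilon}\approx d_K^{-\am}$ of Lemma~\ref{locsubsuper}, and then invoking a weak maximum principle. This cannot work: the boundary hypothesis only says $u_+=o(\tilde W)=o(d_K^{-\ap})$ near $K$, and since $\ap>\am$ the function $u_+$ may blow up faster than any multiple of $d_K^{-\am}$ along $K$. So no constant multiple of $\eta_{\am,\varepsilon}$ dominates $u_+$ on the boundary of a collar, and the comparison never starts. (There is also a circularity issue: the maximum principle available in this paper, Lemma~\ref{comparison}, is proved \emph{using} \eqref{regu1} of the present lemma.)

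The paper sidesteps this by using the \emph{other} barrier. For $l>0$ set $w_l:=(u-l\,\eta_{\ap,\varepsilon})_+$; since $\eta_{\ap,\varepsilon}\approx d_K^{-\ap}\approx \tilde W$, the hypothesis $u_+/\tilde W\to 0$ forces $w_l$ to vanish in a neighbourhood of $K\cap B_{r/2}(\xi)$, so $v_l:=w_l/\phi_\mu$ is automatically in $H^1(B_{r/2}(\xi);\phi_\mu^2)$ with no maximum principle needed. Kato's inequality gives $-L_\mu w_l\le 0$, hence $-\mathrm{div}(\phi_\mu^2\nabla v_l)+\lambda_\mu\phi_\mu^2 v_l\le 0$, and the weighted Moser estimate of \cite[Theorem~3.7]{FMoT} yields
\[
\sup_{B_{r'/2}(\xi)} v_l \;\lesssim\; \Big(\int_{B_{r'}(\xi)} v_l\,\phi_\mu^2\,dx\Big)\Big/\Big(\int_{B_{r'}(\xi)}\phi_\mu^2\,dx\Big).
\]
The key point is that the right-hand side is bounded \emph{uniformly in $l$}: one has $w_l\le u_+\lesssim \tilde W\approx d_K^{-\ap}$ and $\phi_\mu\approx d_K^{-\am}$, so $v_l\,\phi_\mu^2=w_l\,\phi_\mu\lesssim d_K^{-(\ap+\am)}=d_K^{-2H}$, which is integrable because $2H=N-k-2<N-k$. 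Letting $l\downarrow 0$ then gives $u_+\le C\phi_\mu$, i.e.\ \eqref{regu2}; a Caccioppoli estimate with test function $\eta_r^2 v_l$ and another passage $l\to 0$ gives \eqref{regu1} via Proposition~\ref{test}. Your Moser-and-Harnack final steps are then correct, with the caveat that the relevant weighted theory here is that of \cite{FMoT} rather than Fabes--Kenig--Serapioni (for very negative $\mu$ the weight $d_K^{-2\am}$ need not be $A_2$).
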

\begin{proof}
	
	We will only consider the case $\xm<H^2$ and $\xi \in K$, $B_r(\xi)\subset\xO$ since the proof in the other cases is very similar and we omit it.
	
	%Since $u$ is $L_\xm$- harmonic solution in $B_r(x_0)\setminus K$, by standard elliptic estimates we have that $u\in C^2(B_r(x_0)\setminus K)$.
	For $l>0$, set $w_l= (u-l\eta_{\ap,\varepsilon})_+$ where $\eta_{\ap,\varepsilon}$ is the supersolution constructed in \eqref{supsol1}.
	Then by Kato's inequality we deduce $-L_\mu w_l \leq 0$ in $B_r(\xi) \setminus K$.
	Set $v_l:=w_l/\phi_\mu$, then straightforward calculations lead to
	\bel{subsolution}
	-\div(\phi_\mu^2\nabla v_l)+\lambda_\mu \phi_\mu^2v_l\leq0 \quad\text{in } B_r(\xi)\setminus K.
	\ee
	We note here that $v_l=0$ if $u\leq l\eta_{\ap,\varepsilon}$, thus by the assumptions we can easily obtain that $v_l\in H^1(B_\frac{r}{2}(\xi);\phi_\mu^2)$.
	In view of the proof of \cite[Theorem 3.7]{FMoT}, we can prove the existence of a constant $r_{\xb_0}$ and $C=C(K)>0$ such that for any $r'\leq\min\{\frac{r}{2},r_{\xb_0}\}$ and $p\geq1$ the following inequality holds
	
	\bel{moser}
	\sup_{B_{\frac{r'}{2}}(\xi) \setminus K} v_l\leq C \left(\left(\int_{B_{r'}(\xi) \setminus K}\phi_\mu^2dx\right)^{-1}\int_{B_{r'}(\xi) \setminus K} |v_l|^p\phi_\mu^2dx\right)^\frac{1}{p}.
	\ee
	
	From \eqref{bdrcond} and the definition of $w_l$, we have
	$ w_l \leq u_+ \le c\tilde W = cd_K^{-\ap}$ in $B_{\frac{r}{2}}(\xi) \setminus K$.
	This and \eqref{eigenfunctionestimates} imply that
	\begin{align*}
	\int_{B_{r'}(\xi) \setminus K} |v_l|\phi_\mu^2dx &= \int_{B_{\frac{r}{2}}(\xi) \setminus K} |w_l|\phi_\mu dx  \lesssim  \int_{B_{\frac{r}{2}}(\xi) \setminus K} d_K^{-2H}dx<\infty,
	\end{align*}
	since $2H=N-k-2<N-k$.
	Thus by \eqref{moser} and the above inequality we deduce $v_l \leq C_1$ in
	$B_{\frac{r'}{2}}(\xi)$, where $C_1>0$ does not depend on $l$. Thus
	$w_l \leq C_1\phi_\mu$  in  $B_{\frac{r'}{2}}(\xi) \setminus K$.
	By letting $l \to 0$, we derive $u_+ \leq C_1\phi_\mu$ in $B_{\frac{r'}{2}}(\xi) \setminus K$.
	Thus by a covering argument we can find a constant $C_2>0$ such that
	\bel{fragma}
	u_+\leq C_2\phi_\mu \quad \text{in } B_{\frac{r}{2}}(\xi) \setminus K.
	\ee
	This implies $v_0=\frac{u_+}{\phi_\mu} <C_2$ in $B_{\frac{r}{2}}(\xi) \setminus K$.
	
	If we use $\eta^2_r v_l$ as a test function in \eqref{subsolution} we can easily obtain
	$$\int_{B_\frac{r}{2}(\xi) \setminus K}|\nabla (\eta_r v_l)|^2\xf_\xm^2dx+\xl_\xm \int_{B_\frac{r}{2}(\xi) \setminus K}|\eta_r v_l|^2\xf_\xm^2dx\leq C(|\nabla\eta_r|)\int_{B_\frac{r}{2}(\xi) \setminus K}|v_l|^2\xf_\xm^2dx.$$
	By \eqref{fragma}, letting  $l \to0$, we obtain that $\eta_r v_0 \in H^1(\Omega;\phi_\mu^2)$,  which in turn implies that $ \frac{\eta_r u_+}{\phi_\mu}\in H^1(\xO;\xf_\xm^2)=H_0^1(\Omega \setminus K;\phi_\mu^2)$.

 Finally, by combining an argument as in the proof of \cite[Proposition 2.11, page 480]{GkV} and Harnack inequality \cite[Theorem 3.7]{FMoT}, we obtain boundary Harnack inequality \eqref{harnack}.
\end{proof}

The next result provides the maximum principle for $L_\mu$-subharmonic function.

\begin{lemma} \label{comparison}
	Assume $\lambda_\mu>0$. Let $u \in H^1_{loc}(\Omega \setminus K)\cap C(\Omega \setminus K)$ be $L_\xm$-subharmonic in $\xO\setminus K$. Assume that
	\be
	\limsup_{\dist(x,F)\to 0}\frac{u(x)}{\tilde W(x)}\leq0,\quad\forall\; \text{compact} \; F\subset \partial \Omega \cup K.\label{boundary0}
	\ee
	Then $u\leq0$ in $\xO\setminus K.$
\end{lemma}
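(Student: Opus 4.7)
The plan is to show $u_+ \equiv 0$ by first controlling $u_+$ near the boundary via Lemma \ref{lemharnack}, then performing the substitution $w = u_+/\phi_\mu$ and testing the resulting weighted equation against $w$ itself.

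First I would replace $u$ by $u_+ = \max(u,0)$. By Kato's inequality, $u_+$ is again $L_\mu$-subharmonic in $\Omega \setminus K$. The hypothesis $\limsup_{\dist(x,F)\to 0} u(x)/\tilde W(x) \leq 0$ together with $u_+/\tilde W \geq 0$ upgrades to $\lim_{\dist(x,F)\to 0} u_+(x)/\tilde W(x) = 0$ for every compact $F \subset \partial\Omega \cup K$, so $u_+$ satisfies the boundary hypothesis of Lemma \ref{lemharnack} at every point $\xi \in \partial\Omega \cup K$. Covering the compact set $\partial\Omega \cup K$ by finitely many balls $B_{r_i/16}(\xi_i)$ on which \eqref{regu2} applies, and using continuity of $u_+/\phi_\mu$ on the complementary compact subset of $\Omega \setminus K$ (where $\phi_\mu$ is bounded away from $0$), I obtain a global bound
\[
w := \frac{u_+}{\phi_\mu} \in L^\infty(\Omega \setminus K), \qquad w \geq 0.
\]

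Next, I combine \eqref{regu1} for each covering ball with an interior cutoff via a partition of unity to show $w \in H^1_0(\Omega \setminus K; \phi_\mu^2) = H^1(\Omega; \phi_\mu^2)$, the last equality coming from Proposition \ref{test}(i) (which gives density of $C_c^\infty(\Omega \setminus K)$). Now I translate the subharmonicity of $u_+$ into a weighted inequality for $w$: a direct computation (of the type already used in the proof of Lemma \ref{lemharnack}, see \eqref{subsolution}) shows that
\[
-\div(\phi_\mu^2 \nabla w) + \lambda_\mu \phi_\mu^2 w \leq 0
\]
in the sense of distributions in $\Omega \setminus K$, i.e.
\[
\int_{\Omega \setminus K}\phi_\mu^2 \nabla w \cdot \nabla \psi \, dx + \lambda_\mu \int_{\Omega \setminus K}\phi_\mu^2 w\psi\, dx \leq 0
\qquad \forall \psi \in C_c^\infty(\Omega \setminus K),\ \psi \geq 0.
\]

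Finally, since $w \geq 0$ belongs to $H^1_0(\Omega \setminus K; \phi_\mu^2)$, by density I may take $\psi = w$ as a test function, yielding
\[
\int_{\Omega \setminus K}\phi_\mu^2 |\nabla w|^2\, dx + \lambda_\mu \int_{\Omega \setminus K}\phi_\mu^2 w^2\, dx \leq 0.
\]
Both terms being nonnegative and $\lambda_\mu > 0$, this forces $w \equiv 0$, hence $u_+ \equiv 0$ and $u \leq 0$ in $\Omega \setminus K$. The main obstacle is the step asserting $w \in H^1_0(\Omega \setminus K; \phi_\mu^2)$: away from $\partial\Omega \cup K$ this is standard interior regularity, but at boundary points of $K$ the eigenfunction $\phi_\mu$ may blow up and $u_+$ may as well, so I must rely on the quantitative Harnack-type estimates \eqref{regu1}--\eqref{regu2} from Lemma \ref{lemharnack} together with a careful partition-of-unity gluing to justify that the approximating test functions in $C_c^\infty(\Omega \setminus K)$ converge to $w$ in the weighted Sobolev norm.
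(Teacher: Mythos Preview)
Your proposal is correct and follows essentially the same approach as the paper: replace $u$ by $u_+$, set $w=u_+/\phi_\mu$, invoke Lemma~\ref{lemharnack} (specifically \eqref{regu1}) to place $w$ in $H^1_0(\Omega\setminus K;\phi_\mu^2)$, rewrite the subharmonicity as $-\div(\phi_\mu^2\nabla w)+\lambda_\mu\phi_\mu^2 w\le 0$, and test against $w$ to conclude via $\lambda_\mu>0$. The paper's proof is terser about the passage from the local conclusion \eqref{regu1} to the global membership $w\in H^1_0(\Omega\setminus K;\phi_\mu^2)$, whereas you make the covering and partition-of-unity gluing explicit; this is exactly the justification the paper leaves implicit.
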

\begin{proof}
	First we note that $u_+=\max\{u,0\} \in H^1_{loc}(\Omega \setminus K)\cap C(\Omega \setminus K)$ and it is a nonnegative $L_\mu$-subharmonic function in $\Omega \setminus K$. Moreover, $u_+$ satisfies \eqref{bdrcond}. Set $w=u_+/\phi_\mu$ then from \eqref{regu1}, we deduce that $w \in H^1_0(\Omega \setminus K;\phi_\mu^2)$. By straightforward calculations we have
	\bel{su0}
	-\div(\xf_\xm^2\nabla w)+\lambda_\mu \phi_\mu^2 w \leq 0 \quad \text{in } \Omega \setminus K.
	\ee
	Since $v \in H^1_0(\Omega \setminus K;\phi_\mu^2)$, we can use it as a test function for \eqref{su0} and obtain
	$$\int_{\Omega \setminus K}|\nabla w|^2\phi_\mu^2dx+\lambda_\mu \int_{\Omega \setminus K}|w|^2\phi_\mu^2dx\leq 0.$$
	Since $\lambda_\mu>0$, we deduce $w=0$ and hence the result follows straightforward.
\end{proof}

\section{Green kernel}
In this section we prove the existence and sharp two-sided estimates of the Green kernel of $-L_\mu$ in $\Omega \setminus K$. Hereinafter, we assume that $0 \leq k<N-2$, $\mu \leq H^2$ and $\lambda_\mu >0$.

\begin{proposition}[Existence and two-sided estimates of Green kernel] \label{green}  For any $y \in \Omega \setminus K$, there exists a minimal Green kernel  $G_{\mu}(\cdot,y)$ of $-L_\mu$ in $\Omega \setminus K$, i.e. $G_\mu(\cdot, y)$ is the minimal solution of \eqref{greeneq} in the sense of distributions. Furthermore, the following estimates hold.
	
	(i) If $\mu< \left( \frac{N-2}{2}\right)^2$ then for any $x,y \in \Omega \setminus K$, $x \neq y$,
	\bel{Greenest1a}
	G_\mu(x,y) \approx |x-y|^{2-N} \left(1 \wedge \frac{d(x)d(y)}{|x-y|^2}\right)  \left(\frac{|x-y|}{d_K(x)}+1\right)^\am
	\left(\frac{|x-y|}{d_K(y)}+1\right)^\am.
	\ee
	
	(ii) If $k=0$, $K=\{0\}$ and $\mu= \left( \frac{N-2}{2}\right)^2$ then for any $x,y \in \Omega \setminus K$, $x \neq y$,
	\bel{Greenest1b} \BAL
	G_\mu(x,y) &\approx |x-y|^{2-N} \left(1 \wedge \frac{d(x)d(y)}{|x-y|^2}\right) \left(\frac{|x-y|}{|x|}+1\right)^{\frac{N-2}{2}}
	\left(\frac{|x-y|}{|y|}+1\right)^{\frac{N-2}{2}} \\
	&\quad +(|x||y|)^{-\frac{N-2}{2}}\left|\ln\left(1 \wedge \frac{|x-y|^2}{d(x)d(y)}\right)\right|.
	\EAL \ee

\end{proposition}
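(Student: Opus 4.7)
The plan is to derive $G_\mu(x,y) = \int_0^\infty h(t,x,y)\,dt$ directly from sharp two-sided bounds on the heat kernel $h$ of $\partial_t - L_\mu$, and then to establish minimality through a domain exhaustion.

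First, from \cite[Proposition 2.8, Theorem 1.3]{FMoT} combined with the eigenfunction estimate $\phi_\mu \approx d\, d_K^{-\am}$ in \eqref{eigenfunctionestimates}, I would record a two-sided bound of the schematic form
\[
h(t,x,y) \approx t^{-N/2}\, e^{-c|x-y|^2/t}\, e^{-\lambda_\mu t}\, W_\Omega(t,x)\,W_\Omega(t,y)\,W_K(t,x)\,W_K(t,y),
\]
where $W_\Omega(t,z):=1\wedge d(z)/\sqrt{t}$ captures the boundary decay and, in the subcritical case $\mu<((N-2)/2)^2$, $W_K(t,z):=(1+\sqrt{t}/d_K(z))^{\am}$ reflects the singular weight imposed by the Hardy potential. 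Positivity of $\lambda_\mu$ ensures integrability at $t=\infty$.

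Next I would integrate in $t$ using a natural split of $(0,\infty)$ into time windows dictated by the three scales $|x-y|^2$, $d(x)d(y)$, and $d_K(x)d_K(y)$. The short window $t\lesssim|x-y|^2$ is dominated by the Gaussian factor and, after the change of variable $s=|x-y|^2/t$, gives the leading Newtonian contribution $|x-y|^{2-N}$ multiplied by the boundary and Hardy factors evaluated at the scale $\sqrt{t}\approx|x-y|$, namely $(1\wedge d(x)d(y)/|x-y|^2)$ and $(1+|x-y|/d_K(x))^{\am}(1+|x-y|/d_K(y))^{\am}$. The intermediate window $|x-y|^2\lesssim t\lesssim d(x)d(y)$ produces only lower-order terms absorbed by the same expression, and the long tail is killed by $e^{-\lambda_\mu t}$. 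Collecting the contributions yields \eqref{Greenest1a}.

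For minimality I would exhaust $\Omega\setminus K$ by smooth open sets $G_n \Subset G_{n+1}\Subset \Omega\setminus K$ and consider the classical Green function $G_\mu^n(\cdot,y)$ of $-L_\mu$ on $G_n$; since the potential $\mu d_K^{-2}$ is bounded on each $G_n$, these exist by standard theory. Monotonicity of Dirichlet heat kernels under domain enlargement gives $G_\mu^n\nearrow \int_0^\infty h(t,\cdot,y)\,dt$, while Lemma \ref{comparison} shows that any positive distributional solution of \eqref{greeneq} dominates each $G_\mu^n$; passing to the limit identifies the integral as the minimal Green function.

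The main obstacle lies in case (ii) where $k=0$, $K=\{0\}$ and $\mu=((N-2)/2)^2$, for then the ground state satisfies $\phi_{H^2}\approx d\,|x|^{-H}|\ln|x||$ and the heat kernel carries a logarithmic correction. In this critical regime the factor $W_K$ must be replaced by one encoding the log-type blow-up at the origin, and the delicate point is that the intermediate time regime $t\asymp|x||y|$ no longer contributes a negligible term: tracking the logarithmic weight through thresholds determined by $|x|^2$, $|y|^2$ and $d(x)d(y)$ extracts precisely the additional term $(|x||y|)^{-(N-2)/2}|\ln(1\wedge|x-y|^2/(d(x)d(y)))|$ that appears in \eqref{Greenest1b}, matching the fine analysis of \cite{FMoT2}.
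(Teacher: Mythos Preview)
Your overall strategy—integrating the sharp heat-kernel bounds of \cite{FMoT} in $t$ and reading off the spatial profile—is exactly the paper's, and for case (i) your sketch is correct in outline.  Your minimality argument via domain exhaustion is a minor variant of the paper's more direct route (the paper observes that for any nonnegative distributional solution $u$ of \eqref{greeneq} the difference $v=G_\mu(\cdot,y)-u$ is $L_\mu$-harmonic with $v\le C_y\phi_\mu$, hence $v_+$ satisfies \eqref{bdrcond} and Lemma \ref{comparison} gives $v_+=0$); either works.

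However, your handling of the critical case (ii) rests on a genuine misconception.  You assert that $\phi_{H^2}\approx d\,|x|^{-H}|\ln|x||$ and that ``the heat kernel carries a logarithmic correction'', so that $W_K$ must be replaced by a log-type weight.  This is not so: by \eqref{Lin2} and \eqref{eigenfunctionestimates} one has $\phi_{H^2}\approx d\,|x|^{-H}$ \emph{without} any logarithm (the log appears in the auxiliary barrier $W$ of \eqref{W}, not in the eigenfunction), and the heat-kernel bound \eqref{heatest1} from \cite{FMoT} keeps exactly the same shape as in (i), with $\am=H=(N-2)/2$.  The logarithmic term in \eqref{Greenest1b} is \emph{not} inherited from the heat kernel; it is produced by the time integration itself.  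When $k=0$ and $\am=(N-2)/2$, after the substitution $s=|x-y|^2/t$ the portion of the integrand coming from the regime where both Hardy factors are large contributes $s^{\,N/2-2}\cdot s^{-\am}=s^{-1}$, and it is precisely this borderline non-integrability over $[\,|x-y|^2/T,\ 1\vee 2D/T\,]$ that yields the extra term $(|x||y|)^{-(N-2)/2}\,|\ln(1\wedge|x-y|^2/(d(x)d(y)))|$; see the paper's computation \eqref{6}--\eqref{8}.  Your proposed mechanism (a modified $W_K$ and thresholds at $|x|^2$, $|y|^2$) would not reproduce this term, so case (ii) needs to be reworked along these lines.
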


\begin{proof} \textit{Existence.} We see from \cite[Proposition 2.8]{FMoT} that there exists a heat kernel, denoted by $h(t,x,y)$, associated to $\partial_t-L_\mu$. By \cite[Theorem 1.3]{FMoT}, for any $u_0 \in L^2(\Omega)$, there exists a unique solution $u(t,x)$ of
	$$ \left\{ \BAL
	\partial_t u-L_\xm u&=0  \quad &&\text{in } (0,\infty)\times (\Omega \setminus K)\\
	u(t,x)&=0 &&\text{in } (0,\infty)\times \partial(\Omega \setminus K) \\
	u(0,x)&=u_0(x) &&\text{in } \Omega \setminus K.
	\EAL \right. $$
	Furthermore, $u(t,x)=\int_0^th(s,x,y)u_0(y)dy.$
	
	Next, by applying \cite[Theorem 1.3]{FMoT} $d_1=d,\;d_{N-k}=d_K,$ $\xa_1=1,$  $\xa_{N-k}=-\am$ and $\xa_i=0$ for all $i\neq1,N-k$, we deduce that there exist positive constants $c_1<c_2$ and $T$ depending on $N,\Omega,K,\mu$ such that the following estimates are valid
	\begin{equation} \label{heatest1a} \BAL &c_1\left(\frac{\sqrt{t}}{d(x)}+1\right)^{-1}\left(\frac{\sqrt{t}}{d(y)}+1\right)^{-1}
	\left(\frac{\sqrt{t}}{d_K(x)}+1\right)^\am\left(\frac{\sqrt{t}}{d_K(y)}+1\right)^\am t^{-\frac{N}{2}}e^{-\frac{c_2|x-y|^2}{t}}\leq h(t,x,y)\\
	&\leq c_2\left(\frac{\sqrt{t}}{d(x)}+1\right)^{-1}\left(\frac{\sqrt{t}}{d(y)}+1\right)^{-1}
	\left(\frac{\sqrt{t}}{d_K(x)}+1\right)^\am\left(\frac{\sqrt{t}}{d_K(y)}+1\right)^\am t^{-\frac{N}{2}}e^{-\frac{c_1|x-y|^2}{t}},
	\EAL \end{equation}
	for all $t \in (0,T),\; x,y\in \xO\setminus K$ and
	\begin{equation} \label{heatest2a}
	c_1\frac{d(x)d(y)}{d_K(x)^{\am}d_K(y)^{\am}}e^{-\xl_\xm t}\leq h(t,x,y) \leq  c_2\frac{d(x)d(y)}{d_K(x)^{\am}d_K(y)^{\am}}e^{-\xl_\xm t},
	\end{equation}
	for all $t\geq T,\; x,y\in \xO\setminus K.$
	
	Set
	$$ G_{\mu}(x,y)=\int_0^\infty h(t,x,y)dt, \quad x,y \in \Omega \setminus K,\; x \neq y.
	$$
		Since $\lambda_\mu>0$, by the standard argument and \eqref{heatest1a} and \eqref{heatest1a}, we can show that $G_\mu$ is a Green kernel of $-L_\mu$ in $\Omega \setminus K$. \smallskip

	\textit{Minimality.} Assume $u$ is a nonnegative solution of \eqref{greeneq} in the sense of distributions.  Set $v=G_\mu(\cdot,y)-u$ then $v$ is an $L_\mu$-harmonic function in $\Omega \setminus K$ and hence by the standard elliptic theory, $v \in H^1_{loc}(\Omega \setminus K)\cap C(\Omega \setminus K)$. Moreover $v \leq C_y \ei$ in $\Omega \setminus K$. Therefore $v_+=\max\{v,0\}$ is a nonnegative $L_\mu$-subharmonic function and $v_+ \in  H^1_{loc}(\Omega \setminus K)\cap C(\Omega \setminus K)$ and $0 \leq v_+ \leq C_y \ei$ in $\Omega \setminus K$. This implies $v_+$ satisfies \eqref{bdrcond}. Since $\lambda_\mu>0$, using Lemma \ref{comparison}, we derive $v_+ = 0$ and hence $G_\mu(x,y) \leq u(x)$ for all $x \in \Omega \setminus K$.
	
	\medskip
	
	\textit{Estimate on $G_\mu$.}

From \eqref{heatest1a}, \eqref{heatest2a},  we can show that there exist $C_i=C_i(\Omega,K,\mu,N)>0$, $i=1,2$, such that

\begin{align}
\BAL &C_1 \left( 1 \wedge \frac{d(x)d(y)}{t} \right)
\left(\frac{\sqrt{t}}{d_K(x)}+1\right)^\am\left(\frac{\sqrt{t}}{d_K(y)}+1\right)^\am t^{-\frac{N}{2}}e^{-\frac{C_2|x-y|^2}{t}}\leq h(t,x,y)\\
&\leq C_2\left( 1 \wedge \frac{d(x)d(y)}{t} \right)
\left(\frac{\sqrt{t}}{d_K(x)}+1\right)^\am\left(\frac{\sqrt{t}}{d_K(y)}+1\right)^\am t^{-\frac{N}{2}}e^{-\frac{C_1|x-y|^2}{t}},\\
\EAL\label{heatest1}
\end{align}
for all $t \in (0,T),\; x,y\in \xO\setminus K$ and
\begin{align}
\BAL
C_1\frac{d(x)d(y)}{d_K(x)^{\am}d_K(y)^{\am}}e^{-\xl_\xm t}\leq h(t,x,y) \leq  C_2\frac{d(x)d(y)}{d_K(x)^{\am}d_K(y)^{\am}}e^{-\xl_\xm t},\EAL\label{heatest2}
\end{align}
for all $t\geq T,\, x,y\in \xO\setminus K$.

We write
\bel{Gh}
G_{\mu}(x,y)= \int_0^Th(t,x,y)dt+\int_T^\infty h(t,x,y)dt.
\ee

To deal with the first term on the right hand side of \eqref{Gh}, we use \eqref{heatest1}. By change of variable $s=\frac{|x-y|^2}{t}$, we obtain for $i=1,2$,
\begin{equation} \label{2} \begin{aligned}
&\int_0^T\left( 1 \wedge \frac{d(x)d(y)}{t} \right)\left(\frac{\sqrt{t}}{d_K(x)}+1\right)^\am \left(\frac{\sqrt{t}}{d_K(y)}+1\right)^{\am}
t^{-\frac{N}{2}}e^{-\frac{C_i|x-y|^2}{t}}dt \\
&=|x-y|^{-(N-2)} \int_{\frac{|x-y|^2}{T}}^\infty I_i(s)ds,
\end{aligned} \end{equation}
where
$$ I_i(s)=\left( 1 \wedge s\frac{d(x)d(y)}{|x-y|^2} \right)
\left(\frac{|x-y|}{\sqrt{s}d_K(x)}+1\right)^\am \left(\frac{|x-y|}{\sqrt{s}d_K(y)}+1\right)^{\am}s^{\frac{N}{2}-2}e^{-C_i s}.
$$
Put $D= \diam(\Omega)^2$. By straighforward calculations, we obtain, for any $i=1,2$,
\begin{align} \label{4} \BAL
\int_{1 \lor \frac{2D}{T}}^\infty I_i(s) ds
\approx \left(1 \wedge \frac{d(x)d(y)}{|x-y|^2} \right) \left(\frac{|x-y|}{d_K(x)}+1\right)^{\am} \left(\frac{|x-y|}{d_K(y)}+1\right)^{\am}.
\EAL \end{align}

We will only consider the case $k=0$, $K=\{0\}$ and $\mu=\left( \frac{N-2}{2} \right)^2$ since the proof in the other cases is similar. In this case $\am=\frac{N-2}{2}$ and $d_K(x)=|x|$.

For the lower bound, we write
\begin{align} \label{3}
\BAL
&\int_{\frac{|x-y|^2}{T}}^\infty I_2(s)ds
=\int_{\frac{|x-y|^2}{T}}^{1 \lor \frac{2D}{T}} I_2(s)ds + \int_{1 \lor \frac{2D}{T}}^\infty I_2(s)ds.
\EAL
\end{align}
The first term on the right hand side of \eqref{6} can be estimated from below as
\begin{align}\BAL
\int_{\frac{|x-y|^2}{T}}^{1 \lor \frac{2D}{T}} I_2(s)ds
%&=(|x||y|)^{-\frac{N-2}{2}}\int_{\frac{|x-y|^2}{T}}^{\max\{\frac{2D}{T},1\}}\min \left %\{1,s\frac{d(x)d(y)}{|x-y|^2}\right \}
%\left(\left(|x-y|+\sqrt{s}|x|\right)\left(|x-y|+\sqrt{s}|y|\right)\right)^{\frac{N-2}{2}}
%s^{-1}e^{-C_2 s}ds \\
& \gtrsim \left(\frac{|x-y|^2}{|x||y|}\right)^{\frac{N-2}{2}}\int_{\frac{|x-y|^2}{T}}^{1 \lor \frac{2D}{T}} \left(1 \wedge s\frac{d(x)d(y)}{|x-y|^2} \right)
s^{-1}ds\\
& \gtrsim \left(\frac{|x-y|^2}{|x||y|}\right)^{\frac{N-2}{2}}\left|\ln\left(1 \wedge \frac{|x-y|^2}{d(x)d(y)} \right)\right|.
\EAL\label{6}
\end{align}
Thus the lower bound in \eqref{Greenest1b} follows from \eqref{heatest1}, \eqref{2}--\eqref{6}.

For the upper bound, we have
\bel{7}
\BAL
&\int_{\frac{|x-y|^2}{T}}^{1 \lor \frac{2D}{T}} I_1(s)ds
\lesssim (|x||y|)^{-\frac{N-2}{2}}\int_{\frac{|x-y|^2}{T}}^{1 \lor \frac{2D}{T}}  \left (1 \wedge s\frac{d(x)d(y)}{|x-y|^2} \right)
|x-y|^{N-2} s^{-1}e^{-C_1 s}ds\\
&+ (|x||y|)^{-\frac{N-2}{2}}\int_{\frac{|x-y|^2}{T}}^{1 \lor \frac{2D}{T}} \left (1 \wedge s\frac{d(x)d(y)}{|x-y|^2} \right)
\left(\sqrt{s}|x-y|(|x|+|y|)+s|x||y|\right)^{\frac{N-2}{2}}s^{-1}e^{-C_1 s}ds.
\EAL
\ee
Note that
\begin{align} \nonumber
&\int_{\frac{|x-y|^2}{T}}^{1 \lor \frac{2D}{T}} \left(1 \wedge s\frac{d(x)d(y)}{|x-y|^2} \right) s^{-1}e^{-C_1 s}ds \leq \int_{\frac{|x-y|^2}{T}}^{1 \lor \frac{2D}{T}} \left (1 \wedge s\frac{d(x)d(y)}{|x-y|^2} \right) s^{-1}ds\\
&\lesssim   \left(1 \wedge \frac{d(x)d(y)}{|x-y|^2} \right) +  \left(1 \wedge \frac{d(x)d(y)}{|x-y|^2} \right) \left|\ln\left(1 \wedge \frac{|x-y|^2}{d(x)d(y)} \right)\right|. \label{8}
\end{align}
We can also estimate
\begin{align} \label{9}
\BAL
&\int_{\frac{|x-y|^2}{T}}^{1 \lor \frac{2D}{T}} \left(1 \wedge s\frac{d(x)d(y)}{|x-y|^2} \right)
\left(\sqrt{s}|x-y|(|x|+|y|)+s|x||y|\right)^{\frac{N-2}{2}}s^{-1}e^{-C_1 s}ds \\
&\lesssim \left(1 \wedge \frac{d(x)d(y)}{|x-y|^2} \right)
\left(\left(|x-y|+|x|\right)\left(|x-y|+|y|\right)\right)^{\frac{N-2}{2}}.\EAL
\end{align}
Combining \eqref{heatest1}, \eqref{heatest2}, \eqref{2}--\eqref{3}, \eqref{7}--\eqref{9} yields the upper bound of \eqref{Greenest1b}.
\end{proof} \smallskip

\begin{proof}[\textbf{Proof of Theorem \ref{Greenkernel}}] The existence of the minimal Green kernel is proved in Proposition \ref{green}. The estimates \eqref{Greenesta} and \eqref{Greenestb} follows from \eqref{Greenest1a}, \eqref{Greenest1b} and the estimate
	\begin{align} \label{12}
	\frac{d_K(x)d_K(y)}{(|x-y|+d_K(x))(|x-y|+d_K(y))}\approx 1 \wedge \frac{d_K(x)d_K(y)}{|x-y|^2}.
	\end{align}
	The proof is complete.
\end{proof}

\section{Linear equations}

The aim of this subsection is to prove the existence and uniqueness of the solution of $-L_\mu u=f$ with prescribed smooth boundary data. Let us first define the weak solutions.

\begin{definition} \label{weaksolOK}
	Let $f\in L_{loc}^1(\Omega \setminus K)$. We say that  $u$ is a weak solution of equation
	\bel{LE1} -L_\mu u=f \quad \text{in }  \Omega \setminus K
	\ee
	if  $u \in H^1_{loc}(\Omega\setminus K)$ and $u$ satisfies
	\be \label{weaksolution}
	\int_{\Omega} \nabla u \cdot \nabla\psi dx-\xm \int_{\Omega} \frac{u\psi}{d^2_K} dx=\int_{\Omega} f\psi dx\quad\forall \psi \in C_c^1(\Omega \setminus K).
	\ee
\end{definition}

In the next lemma we give the first existence and uniqueness result.

\begin{lemma} \label{existence1}
	For any $f\in L^2(\xO)$ there exists a unique weak solution $u$ of \eqref{LE1} such that $\phi_\mu^{-1}u \in H^1(\Omega; \phi_\mu^2)$. Furthermore, there holds
	\bel{weaksolutionest}
	\int_{\xO} |\nabla u|^2 dx-\xm \int_{\Omega} \frac{u^2}{d^2_K}dx\lesssim \int_{\Omega} |f|^2dx.
	\ee
\end{lemma}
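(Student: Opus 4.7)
\medskip
\noindent\textbf{Proof plan.} The natural strategy is a change of variable designed to remove the Hardy singularity and reduce to a classical Lax--Milgram problem in the weighted Sobolev space $H^{1}(\Omega;\phi_{\mu}^{2})$. Writing $u=\phi_{\mu}v$ and using the eigenvalue identity $-L_{\mu}\phi_{\mu}=\lambda_{\mu}\phi_{\mu}$, a direct computation (Leibniz plus $\Delta\phi_{\mu}=-\lambda_{\mu}\phi_{\mu}-\mu d_{K}^{-2}\phi_{\mu}$) converts $-L_{\mu}u=f$ into the divergence-form equation
\begin{equation*}
-\div(\phi_{\mu}^{2}\nabla v)+\lambda_{\mu}\phi_{\mu}^{2}v=\phi_{\mu}f\qquad\text{in }\Omega\setminus K.
\end{equation*}
Because $\phi_{\mu}\approx d\,d_{K}^{-\am}$ by \eqref{eigenfunctionestimates}, Proposition \ref{test}(i) identifies $H_{0}^{1}(\Omega\setminus K;\phi_{\mu}^{2})$ with $H^{1}(\Omega;\phi_{\mu}^{2})$, so this is the correct functional setting.

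\medskip
The next step is to apply the Lax--Milgram theorem to the bilinear form
\begin{equation*}
a(v,\psi)=\int_{\Omega}\phi_{\mu}^{2}\nabla v\cdot\nabla\psi\,dx+\lambda_{\mu}\int_{\Omega}\phi_{\mu}^{2}v\psi\,dx
\end{equation*}
on $H^{1}(\Omega;\phi_{\mu}^{2})$. Continuity is immediate, while coercivity follows from $\lambda_{\mu}>0$ with constant $\min(1,\lambda_{\mu})$. For the right-hand side, Cauchy--Schwarz gives $|\int \phi_\mu f\psi\,dx|\leq \|f\|_{L^{2}(\Omega)}\|\phi_{\mu}\psi\|_{L^{2}(\Omega)}$, which is controlled by $\|\psi\|_{H^{1}(\Omega;\phi_{\mu}^{2})}$. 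This produces a unique $v\in H^{1}(\Omega;\phi_{\mu}^{2})$, and I set $u:=\phi_{\mu}v$. Since $\phi_{\mu}\in C^{\infty}(\Omega\setminus K)$ is bounded away from zero on compact subsets of $\Omega\setminus K$, one has $u\in H^{1}_{\loc}(\Omega\setminus K)$; and testing $a(v,\cdot)$ against $\phi_{\mu}^{-1}\psi$ for $\psi\in C_{c}^{1}(\Omega\setminus K)$ (permissible since such $\phi_{\mu}^{-1}\psi$ lie in $H^{1}(\Omega;\phi_{\mu}^{2})$) recovers \eqref{weaksolution}. Uniqueness within the class $\phi_{\mu}^{-1}u\in H^{1}(\Omega;\phi_{\mu}^{2})$ follows by invoking the density statement in Proposition \ref{test}(i) to extend the admissible test functions from $C_{c}^{\infty}(\Omega\setminus K)$ to all of $H^{1}(\Omega;\phi_{\mu}^{2})$, reducing uniqueness to the Lax--Milgram problem.

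\medskip
For the energy estimate \eqref{weaksolutionest}, the key identity is
\begin{equation*}
\int_{\Omega}|\nabla u|^{2}dx-\mu\int_{\Omega}\frac{u^{2}}{d_{K}^{2}}dx=\int_{\Omega}\phi_{\mu}^{2}|\nabla v|^{2}dx+\lambda_{\mu}\int_{\Omega}\phi_{\mu}^{2}v^{2}dx=a(v,v).
\end{equation*}
This is obtained by expanding $|\nabla u|^{2}=|\nabla(\phi_{\mu}v)|^{2}$ and integrating the cross term by parts against $\phi_{\mu}\nabla\phi_{\mu}=\tfrac12\nabla(\phi_{\mu}^{2})$, then replacing $\Delta\phi_{\mu}$ by $-\lambda_{\mu}\phi_{\mu}-\mu d_{K}^{-2}\phi_{\mu}$. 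Taking $\psi=v$ in the weak formulation yields $a(v,v)=\int \phi_{\mu}fv\,dx=\int fu\,dx\leq \|f\|_{L^{2}}\|u\|_{L^{2}}$, while coercivity $\lambda_{\mu}\|u\|_{L^{2}}^{2}\leq a(v,v)$ gives $\|u\|_{L^{2}}\leq \lambda_{\mu}^{-1}\|f\|_{L^{2}}$, and inserting this back yields \eqref{weaksolutionest} with an implicit constant $\lambda_{\mu}^{-1}$.

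\medskip
\emph{Main obstacle.} The delicate point is the justification of the integration by parts used in the energy identity: the function $v$ sits only in a weighted space and $\phi_{\mu}$ blows up near $K$ with rate $d_{K}^{-\am}$, so one cannot integrate by parts naively. I plan to do this rigorously by first truncating $v$ by $\chi_{\varepsilon}v$ with a cutoff $\chi_\varepsilon$ vanishing in a neighborhood of $\partial\Omega\cup K$, performing the integration by parts there (where everything is smooth), and then passing to the limit using the $H^{1}(\Omega;\phi_{\mu}^{2})$-regularity of $v$ together with the density result of Proposition \ref{test}(i), which also underlies the uniqueness argument.
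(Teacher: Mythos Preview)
Your proposal is correct and follows essentially the same route as the paper: the ground-state substitution $u=\phi_{\mu}v$, Riesz/Lax--Milgram in $H_{0}^{1}(\Omega\setminus K;\phi_{\mu}^{2})=H^{1}(\Omega;\phi_{\mu}^{2})$ (via Proposition~\ref{test}), and the energy identity $a(v,v)=\int|\nabla u|^{2}-\mu\int d_{K}^{-2}u^{2}$ to deduce \eqref{weaksolutionest}. If anything, you are more careful than the paper, which simply asserts that \eqref{weaksolutionest} ``follows from'' the bound on $a(v,v)$ without spelling out the integration-by-parts justification you flag as the main obstacle.
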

\begin{proof} We first observe that $u$ is a weak solution of \eqref{LE1} if and only if $v=\frac{u}{\phi_\mu}$ satisfies
	\bel{lem}
	\int_{\Omega} \phi_\mu^2\nabla v \cdot \nabla\zeta dx+\lambda_\mu \int_{\Omega} \phi_\mu^2 v\zeta dx=\int_{\Omega} \phi_\mu  f\zeta dx
	\ee
	for any $\zeta \in  C_c^1(\Omega \setminus K)$.
	
	We define the inner product $\langle, \rangle$ and the functional $T_f$ on $H_0^1(\Omega \setminus K;\phi_\mu^2)$ respectively by
	$$ \langle \zeta, \tilde \zeta \rangle = \int_{\Gw \setminus K} \phi_\mu^2(\nabla \zeta \cdot \nabla \tilde \zeta + \lambda_\mu \zeta \tilde \zeta)dx, \quad \zeta, \tilde \zeta \in H_0^1(\Omega \setminus K;\phi_\mu^2),
	$$
	$$T_f(\zeta)=\int_{\Omega \setminus K} \phi_\mu f \zeta dx, \quad \zeta \in H_0^1(\Omega \setminus K;\phi_\mu^2). $$
	We see, by using H\"older inequality and the fact that $f \in L^2(\Omega)$,  that $T_f$ is a bounded linear functional on $H_0^1(\Omega \setminus K;\phi_\mu^2)$. Therefore by Riesz's representation theorem, there exists a unique function $v \in H_0^1(\Omega \setminus K;\phi_\mu^2)$ satisfying
	\bel{lem1}
	\langle v,\zeta \rangle = T_f(\zeta) \quad \forall \zeta \in H_0^1(\Omega \setminus K;\phi_\mu^2).
	\ee
	Furthermore, by choosing $\zeta=v$ in \eqref{lem1} and then using Young's inequality, we obtain
	\bel{lem2} \int_{\Omega} \phi_\mu^2|\nabla v|^2 dx+\lambda_\mu \int_{\Omega} \phi_\mu^2 v^2 dx\lesssim \int_{\Omega} |f|^2 dx.
	\ee

	By Proposition \ref{test} and \eqref{eigenfunctionestimates}, we see that $v \in H^1(\Omega; \phi_\mu^2)$.  Putting $u=\ei v$, we obtain from the above observation that $u$ satisfies \eqref{weaksolution}.
	
	Conversely, by the uniqueness of $v$ and the standard density argument, we see that every weak solution $u$ of \eqref{LE1} such that $\phi_\mu^{-1}u \in H^1(\Omega;\phi_\mu^2)$ can be constructed in this way, and hence the uniqueness for \eqref{LE1} follows.
	Finally, \eqref{weaksolutionest} follows from \eqref{lem2}.
\end{proof}

In the following lemma we prove the existence, as well as pointwise estimates, of the solutions for the equation  $-L_\mu u=f$, with ``zero boundary data".

\begin{lemma} \label{existence1b} Let $f\in L^\infty(\xO)$. Then there exists a unique solution $u$  of \eqref{LE1} such that $\phi_\mu^{-1}u \in H^1(\Omega; \phi_\mu^2)$ and
	\be \label{boundary=0}
	\lim_{\dist(x,F)\to 0}\frac{u(x)}{\tilde W(x)}=0,\quad\forall\; \text{compact} \; F\subset \partial \Omega \cup K.
	\ee
	The solution can be written as $u=\BBG_\mu[f]$  and there holds
	\bel{veryweakest0}
	|u(x)|\lesssim ||f||_{L^\infty(\xO)}d(x)d_K(x)^{-\am},\quad \forall x\in \xO\setminus K.
	\ee
%	Here the implicit constant in \eqref{veryweakest0} depends on $N,k,\xm,\xO,K$.
\end{lemma}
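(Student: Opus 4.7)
The plan is to combine the variational solvability of Lemma \ref{existence1} with the Green kernel bounds of Theorem \ref{Greenkernel} in order to identify $u$ with the Green potential $\mathbb{G}_\mu[f]$, then to read off both the pointwise estimate \eqref{veryweakest0} and the boundary behaviour \eqref{boundary=0}.

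Since $f \in L^\infty(\Omega) \subset L^2(\Omega)$, Lemma \ref{existence1} produces a unique $u \in H^1_{\mathrm{loc}}(\Omega \setminus K)$ with $\phi_\mu^{-1}u \in H^1(\Omega;\phi_\mu^2)$ that satisfies \eqref{weaksolution}. The key analytic step is the integral bound
\[
\int_{\Omega} G_\mu(x,y)\, dy \lesssim d(x)\, d_K(x)^{-\alpha_-}, \qquad x \in \Omega \setminus K,
\]
obtained by inserting \eqref{Greenesta} or \eqref{Greenestb} and splitting $\Omega$ into regions according to the comparative sizes of $|x-y|$, $d(x), d(y)$ and $d_K(x), d_K(y)$. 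The Dirichlet factor $|x-y|^{2-N}\min(1, d(x)d(y)/|x-y|^2)$ contributes $O(d(x))$ by the classical estimate for the Laplacian Green kernel, while the factor $\min(1, d_K(x)d_K(y)/|x-y|^2)^{-\alpha_-}$ produces the weight $d_K(x)^{-\alpha_-}$ after integration in a tubular neighbourhood of $K$. In the subcase $k=0$, $K=\{0\}$, $\mu=(N-2)^2/4$, the extra logarithmic term in \eqref{Greenestb} is supported on the set $\{y:|x-y|^2\leq d(x)d(y)\}$, on which $d(y)\sim d(x)$ and $|y|\sim |x|$, so it likewise integrates to $O\!\big(d(x)|x|^{-(N-2)/2}\big)$. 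Hence
\[
|\mathbb{G}_\mu[f](x)| \leq \|f\|_{L^\infty(\Omega)} \int_\Omega G_\mu(x,y)\, dy \lesssim \|f\|_{L^\infty(\Omega)}\, d(x)\, d_K(x)^{-\alpha_-}.
\]

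Next I would verify that $v := \mathbb{G}_\mu[f]$ is a distributional solution of $-L_\mu v = f$ on $\Omega \setminus K$ (by Fubini together with the defining relation $-L_\mu G_\mu(\cdot,y) = \delta_y$), and that $\phi_\mu^{-1}v \in H^1(\Omega;\phi_\mu^2)$. The weighted $L^2$-bound on $w:=v/\phi_\mu$ is immediate from $|v| \lesssim \phi_\mu$, using $\phi_\mu \approx d\, d_K^{-\alpha_-}$ from \eqref{eigenfunctionestimates}; for the gradient, the equation $-\mathrm{div}(\phi_\mu^2\nabla w)+\lambda_\mu \phi_\mu^2 w = \phi_\mu f$ combined with a Caccioppoli-type argument inside a smooth exhaustion of $\Omega\setminus K$ supplies the required $H^1(\Omega;\phi_\mu^2)$-bound. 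The uniqueness part of Lemma \ref{existence1} then forces $u = \mathbb{G}_\mu[f]$ and transfers \eqref{veryweakest0} from $v$ to $u$. Finally, \eqref{boundary=0} follows by inserting \eqref{veryweakest0} into the definition of $\tilde W$: near $\partial \Omega$ one has $\tilde W \equiv 1$ and $|u|\lesssim d(x)\to 0$; near $K$ in the subcritical case $\tilde W \approx d_K^{-\alpha_+}$, so $|u|/\tilde W \lesssim d(x)\, d_K^{\alpha_+-\alpha_-}\to 0$ since $\alpha_+>\alpha_-$; and in the critical case $\mu=H^2$ one has $\alpha_-=H$ and $\tilde W\approx d_K^{-H}|\ln d_K|$, giving $|u|/\tilde W \lesssim d(x)/|\ln d_K|\to 0$.

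The principal obstacle is the integral estimate $\int_\Omega G_\mu(x,y)\,dy \lesssim d(x)\, d_K(x)^{-\alpha_-}$: the Green kernel mixes three independent length scales, the sign of $-\alpha_-$ flips with the sign of $\mu$, and the critical case $k=0$, $\mu=(N-2)^2/4$ requires a separate treatment of the logarithmic correction. The second delicate point is the verification $\phi_\mu^{-1}\mathbb{G}_\mu[f] \in H^1(\Omega;\phi_\mu^2)$, since Hardy-type inequalities must be used with care near $K$ in order to absorb the $d_K^{-2}$ factor contained in $L_\mu$.
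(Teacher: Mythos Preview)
Your approach is correct but takes a genuinely different route from the paper's. The paper obtains the pointwise bound $|u|\lesssim\phi_\mu$ by a Moser iteration on the weighted quotient $v=u/\phi_\mu$, which satisfies the degenerate-elliptic equation $-\mathrm{div}(\phi_\mu^2\nabla v)+\lambda_\mu\phi_\mu^2 v=\phi_\mu f$ with bounded right-hand side; the machinery of Filippas--Moschini--Tertikas is invoked as a black box, and the identification $u=\mathbb{G}_\mu[f]$ is dismissed as ``standard''. You reverse the logic: first prove $\int_\Omega G_\mu(x,\cdot)\,dy\lesssim d(x)d_K(x)^{-\alpha_-}$ directly from the sharp Green estimates of Theorem~\ref{Greenkernel}, then identify $u$ with $\mathbb{G}_\mu[f]$ via the uniqueness clause of Lemma~\ref{existence1}. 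Your route is more explicit and in fact anticipates the decomposition the paper itself uses later in Lemma~\ref{existence2} (your argument is essentially the case $b=0$ there); the price is the extra verification that $\phi_\mu^{-1}\mathbb{G}_\mu[f]\in H^1(\Omega;\phi_\mu^2)$, which is needed to invoke that uniqueness. This step does go through by your Caccioppoli argument once one uses Proposition~\ref{test}(i) to produce cut-offs $\psi_n\in C_c^\infty(\Omega\setminus K)$ with $\psi_n\to 1$ in $H^1(\Omega;\phi_\mu^2)$, so that the bad term $\int|\nabla\psi_n|^2\phi_\mu^2$ tends to zero. One caution on the integral bound itself: your sketch treats the factor $(1\wedge d_Kd_K/|x-y|^2)^{-\alpha_-}$ as producing a weight $d_K(x)^{-\alpha_-}$, but when $\mu\le 0$ one has $\alpha_-\le 0$ and the monotonicity of this factor reverses, so the splitting near $K$ must be organised differently (the conclusion $|\mathbb{G}_\mu[f]|\lesssim d\,d_K^{-\alpha_-}$ is still correct, since for $\alpha_-\le 0$ the Green kernel is dominated by the classical one and vanishes near $K$).
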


\begin{proof}
	By Lemma \ref{existence1}, there exists a unique solution $u$ of \eqref{LE1} such that $\phi_\mu^{-1}u \in H^1(\Omega; \phi_\mu^2)$. Furthermore, by the standard argument, we can show that $u=\BBG_\mu[f]$.
	
	Next if we put $v=u/\ei$ then $v \in  H^1(\Omega; \phi_\mu^2)$ and it satisfies \eqref{lem}. Since $f \in L^\infty(\Omega)$, by  using a Moser iteration argument similar to the one in \cite[Theorem 2.12]{FMoT2} (see also \cite[Theorem 3.7]{FMoT}) we can show that there exists a positive constant $C>0$ depending on $N,\Omega,K,\mu,\| f\|_{L^\infty(\Omega)}$ such that $\sup_{x\in\xO\setminus K}|v(x)|\leq C$, which implies $u(x)|\leq C\ei(x)$ for every $x\in\Omega \setminus K$. This in turn yields \eqref{veryweakest0} due to \eqref{eigenfunctionestimates}. Combining \eqref{veryweakest0} and \eqref{W} yields \eqref{boundary=0}.
\end{proof}

\begin{remark} If we choose $f=1$ then we derive from \eqref{veryweakest0} and \eqref{eigenfunctionestimates} that
	\bel{G1<eigen} \BBG_\mu[1](x) \lesssim \phi_\mu(x) \quad \forall x \in \Omega \setminus K.
	\ee
\end{remark}

In order to treat more general data, we need the following result.

\begin{lemma} \label{anisotita}
	Let $0<\xa<\xg<N$ and $\alpha<N-k.$ Then
	\begin{equation} \label{eq:ani}
	\sup_{z \in \overline \Omega}\int_{\Omega \setminus K} |y-z|^{-N+\gamma}d_K(y)^{-\alpha}dy \lesssim 1.
	\end{equation}
	The implicit constant in \eqref{anisotita} depends on $N,\Omega,K,\alpha$ and $\gamma$.
\end{lemma}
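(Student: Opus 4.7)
The strategy is to reduce this double-singularity integral to the classical Riesz composition formula by first replacing $d_K(y)^{-\alpha}$ with an average of Riesz kernels centred on $K$, and then swapping the order of integration. One may assume $\alpha>0$, since otherwise $d_K^{-\alpha}$ is bounded on $\overline{\Omega}$ and the claim reduces to $\int_\Omega |y-z|^{\gamma-N}\,dy\lesssim 1$ (using $\gamma>0$).

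The key preliminary estimate I would establish is
\[
d_K(y)^{-\alpha}\lesssim 1+\int_K |y-\xi|^{-\alpha-k}\,d\mathcal{H}^k(\xi),\qquad y\in\Omega.
\]
This is a routine tubular-neighborhood computation based on the charts \eqref{cover}--\eqref{straigh}: for $y$ close to $K$, parameterising $K$ locally as a graph over $\R^k$ and using $|y-\xi|^2\gtrsim d_K(y)^2+|\xi'-\pi_K(y)'|^2$, the integral reduces to $\int_{\R^k}(d_K(y)^2+|x|^2)^{-(\alpha+k)/2}\,dx=C\,d_K(y)^{-\alpha}$, which converges precisely because $\alpha>0$; for $y$ away from $K$ the bound is trivial.

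Applying Tonelli then yields
\[
\int_\Omega |y-z|^{\gamma-N}d_K(y)^{-\alpha}\,dy\lesssim 1+\int_K J(z,\xi)\,d\mathcal{H}^k(\xi),\qquad J(z,\xi):=\int_\Omega|y-z|^{\gamma-N}|y-\xi|^{-\alpha-k}\,dy,
\]
and the Riesz composition estimate (applicable since $\gamma\in(0,N)$ and, using $\alpha<N-k$, also $\alpha+k\in(0,N)$) provides
\[
J(z,\xi)\lesssim \begin{cases} |z-\xi|^{\gamma-\alpha-k}, & \gamma<\alpha+k,\\ 1+\bigl|\log|z-\xi|\bigr|, & \gamma=\alpha+k,\\ 1, & \gamma>\alpha+k.\end{cases}
\]

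The final step is integration over $K$. If $\gamma>\alpha+k$ the bound is immediate; this also covers $k=0$ automatically, because then $\alpha+k=\alpha<\gamma$ and $K$ is a single point. In the remaining regimes $k\geq 1$, and setting $p:=\alpha+k-\gamma\geq 0$, the standing hypothesis $\alpha<\gamma$ forces $p<k$. Consequently $\int_K|z-\xi|^{-p}\,d\mathcal{H}^k(\xi)\lesssim 1$ uniformly in $z\in\overline{\Omega}$ by the same tubular-coordinate calculation as in the first step, and the log-critical case is handled identically via local integrability of the logarithm on a $k$-manifold. The main subtlety, and the part requiring the sharpness of the hypotheses, is this last $K$-integration: the inequality $\alpha<\gamma$ is precisely what prevents $p$ from reaching $k$ and thereby guarantees uniformity in $z$.
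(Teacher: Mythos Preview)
Your proof is correct and takes a genuinely different route from the paper's.

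The paper argues via distribution functions: for $z$ near $K$ it estimates the weighted measure $m_\lambda(z)=\int_{\{|y-z|^{\gamma-N}>\lambda\}} d_K(y)^{-\alpha}\,dy$ by splitting the level set into the regions $\{|z-y|<d_K(y)\}$ and $\{|z-y|\ge d_K(y)\}$, straightening $K$ in the second region via the charts \eqref{straigh}--\eqref{propdist}, and obtaining $m_\lambda\lesssim\lambda^{-(N-\alpha)/(N-\gamma)}$; it then invokes a Marcinkiewicz-type lemma from Bidaut-V\'eron--Vivier to pass from the weak-$L^p$ bound to the $L^1$ bound. Your approach instead represents $d_K^{-\alpha}$ as a Riesz potential supported on $K$ and reduces the problem to the classical Riesz composition estimate followed by an integration over the $k$-manifold. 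What your argument buys is a cleaner accounting of the hypotheses: $\alpha<N-k$ is exactly what makes $|y-\xi|^{-\alpha-k}$ locally integrable in $\R^N$, and $\alpha<\gamma$ is exactly what makes the final $K$-integral converge ($p=\alpha+k-\gamma<k$). The paper's approach is more self-contained in that it avoids the Riesz composition formula, but it trades that for the external reference and a somewhat less transparent use of the standing inequalities. Both arguments ultimately flatten $K$ via the same tubular charts; the difference is whether one does so once (to build the potential representation of $d_K^{-\alpha}$) or inside the level-set estimate.
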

\begin{proof} First we consider $z \in K_{\beta_1}$ where $\beta_1$ is the constant in\eqref{cover}. We write
	\begin{equation} \label{anis1} \int_{\Omega \setminus K} |y-z|^{-N+\gamma}d_K(y)^{-\alpha}dy = \int_{\Omega \setminus K_{\beta_1} } |y-z|^{-N+\gamma}d_K(y)^{-\alpha}dy + \int_{K_{\beta_1} } |y-z|^{-N+\gamma}d_K(y)^{-\alpha}dy.
	\end{equation}
	Since $\alpha>0$ and $\gamma>0$, we have
	\begin{equation} \label{anis2} \int_{\Omega \setminus K_{\beta_1}} |y-z|^{-N+\gamma}d_K(y)^{-\alpha}dy < \beta_1^{-\alpha} \int_{\Omega \setminus K_{\beta_1} } |y-z|^{-N+\gamma}dy \lesssim 1.
	\end{equation}
	Next, we deal with the second term in \eqref{anis1}. For $\lambda>0$ and $x \in K_{\beta_1}$, set
	\begin{align*}
	A_\lambda(x):=\Big\{y \in K_{\beta_1} \setminus\{x\}:\;\; |x-y|^{-N+\gamma}>\lambda \Big \},\quad
	m_{\lambda}(x):=\int_{A_\lambda(x)}d_K(y)^{-\alpha}dy.
	\end{align*}
	For $x \in K_{\beta_1}$, there is a unique $\xi \in K$  satisfies $|x-\xi|=d_K(x) \leq \beta_1$. Furthermore, there exist $C^2$ functions $\Gamma_i^\xi:\mathbb{R}^{k}\rightarrow\mathbb{R},$ $i=k+1,...,N$, such that (upon relabeling and reorienting the coordinate axes if necessary) \eqref{straigh} holds.
	
	We write
	\begin{equation} \label{m1}
	m_{\lambda}(x)=\int_{A_\xl(x)\cap\{|x-y|<d_K(y)\}}d_K(y)^{-\alpha}dy+\int_{A_\xl(x)\cap\{|x-y|\geq d_K(y)\}}d_K(y)^{-\alpha}dy.
	\end{equation}
	Since $A_\lambda(x) \subset B(x,\lambda^{-\frac{1}{N-\gamma}})$, the first term on the right hand side of \eqref{m1} is estimated as
	\begin{equation} \label{m2} \begin{aligned}
	\int_{A_\xl(x)\cap\{|x-y|<d_K(y)\}}d_K(y)^{-\alpha}dy \leq \int_{B(x,\lambda^{-\frac{1}{N-\gamma}})}|x-y|^{-\alpha}dy
	\lesssim \lambda^{-\frac{N-\alpha}{N-\gamma}}.
	\end{aligned} \end{equation}
	
	Next we treat the second term on the right hand side of \eqref{m1}. From \eqref{BinV}, if $\lambda \geq \beta_1^{-(N-\gamma)}$ then $A_\lambda(x) \subset B(x,\beta_1) \subset V(\xi,\beta_0)$. Consequently, by \eqref{propdist}, $\delta_K^\xi(y) \leq C \| K \|_{C^2}d_K(y)$ for every $y \in A_\lambda(x)$, where $\delta_K^\xi$ be defined in \eqref{dist2}. Therefore
	\bel{m3} \BAL
	\int_{A_\lambda(x)\cap\{|x-y|\geq d_K(y)\}}d_K(y)^{-\alpha}dy
	&\lesssim \int_{A_\xl(x)\cap\{|x-y|\geq d_K(y)\}}\delta_K^\xi(y)^{-\alpha}dy\\
	&\lesssim \int_{\{|\zeta''|<c \xl^{-\frac{1}{N-\xg}}\}}|\zeta''|^{-\xa}\int_{\{|\zeta'|< \xl^{-\frac{1}{N-\xg}}\}}d\zeta'd\zeta''\\
	&\lesssim \lambda^{-\frac{N-\alpha}{N-\gamma}}.
	\EAL \ee
	Here in the second estimate we have used the change of variable $\zeta'=y'$ and  $\zeta_i=y_i-\Gamma_i^\xi(y'),\;\forall i=k+1,...,N$. Combining \eqref{m1}--\eqref{m3} yields
	\begin{equation} \label{m4} m_\lambda(x) \lesssim \lambda^{-\frac{N-\alpha}{N-\gamma}}
	\end{equation}
	for all $\lambda \geq \beta_1^{-(N-\gamma)}$. Since $\alpha<N-k$, we can deduce that \eqref{m4} holds true for all $\lambda >0$.
	
	Therefore we can apply \cite[Lemma 2.4]{BVi} with $\mathcal{H}(x,y)=|x-y|^{-N+\xg}$,  $\Omega=K_{\beta_1}$, $\eta=d_K^{-\xa}$ and $\omega=\delta_z$,
	where $\delta_z$ is the Dirac measure concentrated at $z$, to obtain
	$$ \| \mathcal{H}(z,\cdot)  \|_{L_w^{\frac{N-\alpha}{N-\gamma}}(K_{\beta_1};d_K^{-\alpha}) } \lesssim 1,
	$$
	where $L_w^p$ ($p>1$) denotes the weak $L^p$ space or Marcikiewicz space (see e.g. \cite{MVbook}). Hence
	$$
	\int_{K_{\beta_1}} |y-z|^{-N+\gamma}d_K(y)^{-\alpha}dy  \lesssim 1.
	$$
	This, together with \eqref{anis1} and \eqref{anis2}, yields
	\begin{equation} \label{anis4}
	\int_{\Omega \setminus K} |y-z|^{-N+\gamma}d_K(y)^{-\alpha}dy \lesssim 1 \quad \forall z \in K_{\beta_1}.
	\end{equation}
	
	If $z\in {\overline \Omega} \setminus K_{\beta_1}$ then
	\bel{kan2} \BAL
	&\int_{\Omega \setminus K}|y-z|^{-N+\gamma}d_K(y)^{-\alpha}dy \\
%	&= \int_{K_{\frac{\beta_1}{2}}} |y-z|^{-N+\gamma}d_K(y)^{-\alpha}dy + \int_{\Omega %\setminus K_{\frac{\beta_1}{2}}} |y-z|^{-N+\gamma}d_K(y)^{-\alpha}dy \\
	&\leq \left(\frac{2}{\beta_1}\right)^{N-\gamma} \int_{K_{\frac{\beta_1}{2}}}d_K(y)^{-\alpha}dy +  \left(\frac{2}{\beta_1}\right)^\xa\int_\xO|y-z|^{-N+\xg}dx \lesssim 1.
	\EAL \ee
 Combining \eqref{anis4} and \eqref{kan2} leads to \eqref{eq:ani}.
\end{proof}

\begin{lemma} \label{existence2} Let $0<b<\ap+2$ and $f\in L^\infty(\xO)$. Then there exists a unique solution $u\in H^1_{loc}(\xO\setminus K)\cap C(\xO\setminus K)$ satisfying \eqref{boundary=0} of
	\bel{dKb}  -L_\xm u=fd_K^{-b} \quad \text{in } \Omega \setminus K,
	\ee
	in the sense of Definition \ref{weaksolOK}. Moreover, for any $\xg\in[\am,\infty)\cap(b-2,\infty)$, there holds
	\bel{veryweakest}
	|u(x)|\lesssim ||f||_{L^\infty(\xO)}d(x)d_K(x)^{-\xg},\quad x\in \xO\setminus K.
	\ee
	Here the implicit constant in \eqref{veryweakest} depends on $N,k,\xm,\xO,K,b$.

\end{lemma}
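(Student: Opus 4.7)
My plan is to take $u:=\BBG_\mu[fd_K^{-b}]$ as the candidate solution. The argument splits into three stages: (i) establish the pointwise bound \eqref{veryweakest} directly from the two-sided Green kernel estimates of Theorem \ref{Greenkernel}; (ii) use \eqref{veryweakest} to show that $u$ belongs to $H^1_{\mathrm{loc}}(\Omega\setminus K)\cap C(\Omega\setminus K)$, is a weak solution of \eqref{dKb} in the sense of Definition \ref{weaksolOK}, and satisfies the boundary condition \eqref{boundary=0}; and (iii) deduce uniqueness from the maximum principle Lemma \ref{comparison}.

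The pointwise estimate is the main technical step. I focus on the subcritical case $\mu<H^2$; the critical case $\mu=H^2$, $k=0$, $K=\{0\}$ is analogous, with the logarithmic summand in \eqref{Greenest1b} absorbed by the choice $\gamma=H$. Starting from \eqref{Greenest1a}, I would (a) factor out $d(x)$ using $1\wedge\frac{d(x)d(y)}{|x-y|^2}\lesssim \frac{d(x)}{d(x)+|x-y|}$ (which follows from $d(y)\leq d(x)+|x-y|$); (b) expand the Hardy brackets by $\left(\frac{|x-y|}{d_K(\cdot)}+1\right)^{\am}\lesssim (d_K(\cdot)^{\am}+|x-y|^{\am})d_K(\cdot)^{-\am}$; and (c) split the $y$-integration at $|x-y|=\tfrac12 d_K(x)$. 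On the inner region, $d_K(y)\approx d_K(x)$ and the second Hardy bracket equals $1$; polar-coordinate integration, combined with the fact that $d_K(x)$ small forces $d(x)\gtrsim \mathrm{dist}(K,\partial\Omega)$ since $K\Subset\Omega$, produces a contribution absorbed by $d(x)d_K(x)^{-\gamma}$ as soon as $\gamma>b-2$. On the outer region, the resulting integrals are of the form $\int_{\Omega\setminus K}|x-y|^{-N+\sigma}d_K(y)^{-\rho}\,dy$ with $\rho\in\{b,b+\am\}$ and are controlled by Lemma \ref{anisotita}; its admissibility condition $\rho<N-k$ at the extreme value $\rho=b+\am$ reduces to $b<\alpha_++2$, which is exactly our hypothesis. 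The leftover factor $d_K(x)^{-\am}$ produced by the Hardy bracket evaluated at $x$ is absorbed by $d_K(x)^{-\gamma}$ precisely when $\gamma\geq\am$, matching the other constraint in the statement.

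With \eqref{veryweakest} in hand, the remaining arguments are standard. Setting $f_n:=(fd_K^{-b})\wedge n\in L^\infty(\Omega)$ (split into positive/negative parts for general $f$) and applying Lemma \ref{existence1b}, one gets weak solutions $u_n:=\BBG_\mu[f_n]\in H^1_{\mathrm{loc}}(\Omega\setminus K)\cap C(\Omega\setminus K)$. Passing to the limit using the uniform bound $|u_n|\leq\BBG_\mu[|f|d_K^{-b}]\lesssim d(x)d_K(x)^{-\gamma}$ together with dominated convergence (on the support of any test function from Definition \ref{weaksolOK}, $d_K\geq\varepsilon>0$, so $f_n$ converges uniformly to $fd_K^{-b}$) yields that $u=\BBG_\mu[fd_K^{-b}]$ lies in $H^1_{\mathrm{loc}}(\Omega\setminus K)\cap C(\Omega\setminus K)$ and solves \eqref{dKb} in the sense of Definition \ref{weaksolOK}. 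Choosing $\gamma\in[\am,\ap)\cap(b-2,\infty)$ (non-empty since $b<\alpha_++2$ and $\am<\ap$ in the subcritical case; one takes $\gamma=H$ in the critical case), the pointwise estimate together with the asymptotics $\tilde W\approx d_K^{-\alpha_+}$ near $K$ (respectively $\tilde W\approx d_K^{-H}|\ln d_K|$ in the critical case) and $\tilde W\approx 1$ near $\partial\Omega$ yields $u(x)/\tilde W(x)\to 0$ as $\mathrm{dist}(x,F)\to 0$ for any compact $F\subset\partial\Omega\cup K$, establishing \eqref{boundary=0}. Uniqueness is then immediate: the difference of two solutions is $L_\mu$-harmonic and satisfies \eqref{boundary=0}, so Lemma \ref{comparison} (applicable since $\lambda_\mu>0$) forces it to vanish.

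The main obstacle is the pointwise estimate: it requires careful bookkeeping of the three length scales $d(y)$, $d_K(y)$, $|x-y|$, of the multiple singular exponents produced by the product form of the Green kernel bound, and of the interaction between the two natural splitting thresholds (inner/outer relative to $d_K(x)$ and interior/near-boundary relative to $d(x)$). The sharpness of the two conditions $\gamma\geq\am$ and $\gamma>b-2$ reflects the two distinct sources of singularity---the intrinsic Hardy scaling of $L_\mu$, carried by $\am$, and the explicit $d_K^{-b}$ on the right-hand side---and the argument must keep them in balance.
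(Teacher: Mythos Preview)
Your proposal is correct and follows essentially the same strategy as the paper: both prove the pointwise bound \eqref{veryweakest} from the Green estimates of Theorem \ref{Greenkernel} together with Lemma \ref{anisotita}, construct the solution by truncating the right-hand side and passing to the limit via Lemma \ref{existence1b}, and deduce uniqueness and \eqref{boundary=0} from Lemma \ref{comparison} after choosing $\gamma\in[\am,\ap)\cap(b-2,\ap)$. The only tactical differences are that the paper drops the factor $1\wedge\frac{d(x)d(y)}{|x-y|^2}$ by first working in $K_{\beta_0}$ (where $d\approx 1$) and then upgrading to the global estimate, whereas you extract the factor $d(x)$ directly via $1\wedge\frac{d(x)d(y)}{|x-y|^2}\lesssim\frac{d(x)}{d(x)+|x-y|}$; and the paper splits according to $d_K(y)\lessgtr\tfrac14 d_K(x)$ after expanding the Hardy brackets into four terms $J_1,\dots,J_4$, while you split according to $|x-y|\lessgtr\tfrac12 d_K(x)$.
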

\begin{proof} We consider only the case  $0<\mu<H^2$ since the proof in other cases is similar with some minor modifications.  In this case $\am>0$.
	
	Assume that $f\geq0.$ Set $f_n=\min\{fd_K^{-b},n\}$. Then by Lemma \ref{existence1b}, there exists a unique solution $u_n$ of $-L_\xm v=f_n$ in $\xO\setminus K$ satisfying \eqref{boundary=0}. Morever this solution can be written as $u_n=\BBG_\mu[f_n]$. 	
	Let $x \in K_{\beta_0}$ where $\beta_0$ is the constant \eqref{straigh}. By \eqref{Greenest1a}, we have
	\begin{align*}
	u_n(x)
	&\lesssim \int_{\Omega \setminus K} |x-y|^{-N+2}\left(\left(\frac{|x-y|}{d_K(x)}\right)^\am+1\right)
	\left(\left(\frac{|x-y|}{d_K(y)}\right)^\am+1\right) f_n(y)dy\\
	&= J_1 + J_2 + J_3 + J_4,\\
	\end{align*}
	where
	\begin{align*}
	J_1 &= d_K(x)^{-\am}\int_{\Omega \setminus K} |x-y|^{-N+2+2\am}d_K(y)^{-\am} f_n(y)dy, \\
	J_2 &= d_K(x)^{-\am}\int_{\Omega \setminus K} |x-y|^{-N+2+\am} f_n(y)dy, \\
	J_3 &= \int_{\Omega \setminus K} |x-y|^{-N+2+\am}d_K(y)^{-\am} f_n(y)dy, \\
	J_4 &= \int_{\Omega \setminus K} |x-y|^{-N+2}f_n(y)dy.
	\end{align*}
	
	First we note that if $d_K(y)\leq \frac{1}{4}d_K(x)$ then $|x-y|\geq\frac{3}{4}d_K(x).$ Thus, since $0<b<\ap +2$, we have, for any $\gamma \in [\am,\infty) \cap (b-2,\infty)$,
	\begin{align*}
	J_1 &\leq ||f||_{L^\infty(\Omega)}d_K(x)^{-\am} \int_{\Omega \setminus K} |x-y|^{-N+2+2\am}d_K(y)^{-\am-b}dy\\
	&\lesssim ||f||_{L^\infty(\Omega)} d_K(x)^{-\xg}\int_{(\Omega \setminus K)  \cap\{d_K(y)\leq \frac{1}{4}d_K(x)\}} |x-y|^{-N+2+\am+\xg}d_K(y)^{-\am-b}dy\\
	&\quad + ||f||_{L^\infty(\Omega)} d_K(x)^{-\xg}\int_{(\Omega \setminus K)  \cap\{d_K(y)\geq \frac{1}{4}d_K(x)\}} |x-y|^{-N+2+2\am}d_K(y)^{-2\am-b+\xg}dy\\
	&\lesssim ||f||_{L^\infty(\Omega)} d_K(x)^{-\xg},
	\end{align*}
	where in the last inequality we have used Lemma \ref{anisotita}.
	
	Similarly, since $0<b<\ap +2$, for any $\gamma \in [\am,\infty) \cap (b-2,\infty)$, we can show that
	\begin{align*}
	J_2 + J_3 + J_4 \lesssim d_K(x)^{-\gamma}.
	\end{align*}

	Combining the above estimates, we deduce that for any $\gamma\in[\alpha_+,\infty)\cap(b-2,\infty)$,
	\bel{veryweakest2a}
	0 \leq u_n(x) \lesssim ||f||_{L^\infty(\Omega)}d_K(x)^{-\xg},\quad \forall x\in K_{\beta_0}.
	\ee
	This implies that
	\be
	0 \leq u_n(x) \lesssim ||f||_{L^\infty(\Omega)}d(x)d_K(x)^{-\xg},\quad \forall x \in \xO\setminus K,\label{veryweakest2}
	\ee
	where the implicit constant depends on $N,\mu,\Omega,K,b,\gamma$.
	
	By \eqref{veryweakest2} and Lemma \ref{comparison}, $u_n\nearrow u$ locally uniformly in $\xO\setminus K$ and in $H^1_{loc}(\xO\setminus K).$ Furthermore, by the standard elliptic theory $u\in C^1(\xO\setminus K),$ and satisfies
	\be\label{veryweakest3}
	0 \leq u(x) \lesssim ||f||_{L^\infty(\xO \setminus K)}d(x)d_K(x)^{-\gamma} \quad \forall x\in \xO\setminus K.
	\ee
	This implies
	$$ 0 \leq \frac{u(x)}{\tilde W(x)} \lesssim \| f \|_{L^\infty(\Omega)}d(x)d_K(x)^{\ap - \gamma} \quad \forall x\in \xO\setminus K.
	$$
	Therefore, by choosing $\xg\in [\am,\ap)\cap(b-2,\ap)$, we derive \eqref{boundary=0}.
	
	For the general case, let $u_1$ and $u_2$ are solutions of \eqref{dKb} in $\Omega \setminus K$ with $f$ replaced by $f_-$ and $f_+$ respectively. Then $u_1$ (resp. $u_2$) satisfies \eqref{boundary=0} and  \eqref{veryweakest} with $f$ replaced by $f_-$ (resp. by $f_+$). Set $u=u_2 -u_1$ then  $u$ is a solution of \eqref{dKb} and satisfies \eqref{boundary=0}, \eqref{veryweakest}.
	
	The uniqueness follows from \eqref{boundary=0} and Lemma \ref{comparison}. Thus the proof in the case $0<\mu<H^2$ is completed.
\end{proof}

The following lemma is the main result of this subsection.
\begin{lemma} \label{mainlemma1}
	For any  $h \in C(\partial \Omega \cup K)$,
	there exists a unique $L_{\xm}$-harmonic function $u \in H^1_{loc}(\xO\setminus K)\cap C(\xO\setminus K)$  satisfying
	\bel{bdrcond2} \lim_{x\in\Omega \setminus K,\;x\rightarrow y\in\partial \Omega \cup  K}\frac{u(x)}{\tilde W(x)}=h(y)\qquad\text{uniformly w.r.t. } y\in\partial\xO\cup K.
	\ee
	Furthermore,
	\bel{ektimisi}
	\left|\!\left|\frac{u}{\tilde W}\right|\!\right|_{L^\infty(\Omega)}\lesssim ||h||_{C( \partial\Omega \cup K)}.
	\ee
\end{lemma}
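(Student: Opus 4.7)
The plan is to prove the lemma in four stages: uniqueness, existence for smooth boundary data (by reduction to the zero-trace problem solved in Lemma \ref{existence2}), the $L^\infty$ estimate \eqref{ektimisi} via a constant-datum comparison, and extension to continuous data by density. Uniqueness is immediate from Lemma \ref{comparison}: if $u_1, u_2$ both solve the problem, their difference is $L_\mu$-harmonic in $\Omega \setminus K$ with $(u_1 - u_2)/\tilde W \to 0$ uniformly on $\partial\Omega \cup K$, and applying Lemma \ref{comparison} to $\pm(u_1 - u_2)$ forces $u_1 \equiv u_2$.

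\textbf{Existence for smooth data.} First I assume $h$ is the restriction to $\partial\Omega \cup K$ of some $\tilde h \in C^2(\overline\Omega)$; this is possible whenever $h \in C^2(\partial\Omega \cup K)$ by a partition of unity, since $\partial\Omega$ and $K$ are disjoint $C^2$ manifolds. Set $v := \tilde h\, \tilde W$, so that $v/\tilde W = \tilde h$ already realizes the boundary trace. Using the product rule, \eqref{laplaciand}, \eqref{lum1}, and the defining identity $\ap^2 - 2H\ap + \mu = 0$, a direct computation shows that in $K_{\beta_0/4}$
\[
L_\mu v = d_K^{-\ap}\Delta \tilde h - 2\ap\, d_K^{-\ap-1}\nabla \tilde h \cdot \nabla d_K - \ap\, g\, \tilde h\, d_K^{-\ap-1}
\]
in the subcritical case (with an additional factor $|\ln d_K|$ in the critical case that can be absorbed into $d_K^{-\varepsilon}$ for arbitrarily small $\varepsilon > 0$). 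On $\Omega \setminus K_{\beta_0/4}$, $\tilde W$ is smooth and bounded so $L_\mu v$ is bounded. Hence $-L_\mu v = f\, d_K^{-b}$ with $b := \ap + 1$ (or $\ap + 1 + \varepsilon$ in the critical case), still satisfying $b < \ap + 2$, and $f \in L^\infty(\Omega)$ with $\|f\|_\infty \lesssim \|\tilde h\|_{C^2(\overline\Omega)}$. Lemma \ref{existence2} then provides $w \in H^1_{loc}(\Omega \setminus K) \cap C(\Omega \setminus K)$ satisfying $-L_\mu w = f\, d_K^{-b}$ with zero boundary trace and $|w| \lesssim d\, d_K^{-\gamma}$ for any $\gamma$ in the nonempty interval $[\am,\ap) \cap (b-2,\ap)$. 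Setting $u := v - w$, I obtain $L_\mu u = 0$ in the distributional sense, and $u/\tilde W = \tilde h - w/\tilde W \to h$ uniformly on $\partial\Omega \cup K$.

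\textbf{$L^\infty$ bound via the constant-datum solution.} Applying the construction above to $h \equiv 1$ produces $u_1 = \tilde W - w_1$ with $|w_1/\tilde W| \lesssim d\, d_K^{\ap - \gamma}$, a function bounded on $\Omega \setminus K$. Therefore $\|u_1/\tilde W\|_{L^\infty(\Omega)} \le C_0$ for a universal constant $C_0 = C_0(N,\Omega,K,\mu)$. Moreover $u_1 \ge 0$: applying Lemma \ref{comparison} to $-u_1$ (which is $L_\mu$-subharmonic with $-u_1/\tilde W \to -1 \le 0$) yields $-u_1 \le 0$. By uniqueness and linearity, for any $h \in C^2(\partial\Omega \cup K)$ the constant-datum solution with value $h_0 := \|h\|_{L^\infty}$ equals $h_0\, u_1$. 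Since $h_0 \pm h \ge 0$ on $\partial\Omega \cup K$, the same comparison applied to $-u_{h_0 \pm h}$ gives $u_{h_0 \pm h} \ge 0$, equivalently $\pm u_h \le h_0 u_1$. Combining yields $|u_h| \le h_0 u_1 \le C_0\|h\|_\infty \tilde W$, which is \eqref{ektimisi} for $h \in C^2(\partial\Omega \cup K)$.

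\textbf{Density extension and main obstacle.} For $h \in C(\partial\Omega \cup K)$, approximate $h$ uniformly by restrictions $\{h_n\}$ of $C^2(\overline\Omega)$ functions (Tietze extension plus mollification on each of the disjoint components $\partial\Omega$ and $K$). By \eqref{ektimisi} applied to $u_{h_n} - u_{h_m} = u_{h_n - h_m}$, the sequence $\{u_{h_n}/\tilde W\}$ is Cauchy in $L^\infty(\Omega)$; let $\phi$ be its uniform limit and set $u := \phi\, \tilde W$. Locally uniform convergence $u_{h_n} \to u$ on compact subsets of $\Omega \setminus K$, combined with standard interior elliptic regularity, passes $L_\mu u_{h_n} = 0$ to the limit so that $u \in H^1_{loc}(\Omega \setminus K) \cap C(\Omega \setminus K)$ and $L_\mu u = 0$ distributionally. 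A $3\varepsilon$-argument combining the uniform convergences $u_{h_n}/\tilde W \to \phi$ on $\Omega$, $u_{h_n}/\tilde W \to h_n$ at $\partial\Omega \cup K$, and $h_n \to h$ confirms the boundary condition \eqref{bdrcond2} for $u$, and the estimate \eqref{ektimisi} passes to the limit. The main obstacle is the careful bookkeeping of the admissible exponent $\gamma$ in Lemma \ref{existence2}, especially in the critical case where the logarithmic factor forces a slight increase of $b$; nonemptiness of $[\am,\ap) \cap (b-2,\ap)$ is precisely what allows the decomposition $u = v - w$ to deliver a function with the required boundary trace.
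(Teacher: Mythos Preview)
Your proof follows essentially the same route as the paper: uniqueness via Lemma~\ref{comparison}, reduction of the smooth-data case to Lemma~\ref{existence2} by subtracting $\tilde h\,\tilde W$, and a density/$3\varepsilon$ argument for continuous $h$. Your $L^\infty$ step via the constant-datum solution $u_1$ and comparison makes explicit what the paper compresses into the single line ``by \eqref{tofragma} and Lemma~\ref{comparison}''; both arguments are the same in substance.

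One slip to flag: in the critical case $\mu=H^2$ you have $\am=\ap=H$, so the interval $[\am,\ap)\cap(b-2,\ap)$ you invoke in the final paragraph is \emph{empty}, not nonempty. The repair is already in the statement of Lemma~\ref{existence2}: the estimate \eqref{veryweakest} is valid for every $\gamma\in[\am,\infty)\cap(b-2,\infty)$, so with $b=H+1+\varepsilon$ you may simply take $\gamma=H$. Then near $K$,
\[
\frac{|w|}{\tilde W}\lesssim \frac{d\,d_K^{-H}}{d_K^{-H}|\ln d_K|}=\frac{d}{|\ln d_K|}\to 0,
\]
and the boundary-trace conclusion survives. (The paper is equally terse here, writing only $b=\ap+1$; your explicit mention of the logarithmic correction is in fact more careful.)
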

\begin{proof}
	The uniqueness is a consequence of Lemma \ref{comparison}. \smallskip
	
	\emph{Existence.} First we assume that $h\in C^2(\overline{\xO})$. If $u\in  C^2(\Omega \setminus K)$ is an $L_\mu$-harmonic function in $\Omega \setminus K$ then $v=u-\tilde W h$ satisfies
	\be \label{10}
	-L_\xm v=-L_\xm (\tilde Wh)=h(-L_\xm\tilde W) -2\nabla \tilde W\nabla h-\tilde W\xD h \quad \text{in } \Omega \setminus K.
	\ee
	Thus it is enough to find a solution of \eqref{10}.
	
	In view of the proof of Lemma \ref{subsup}, we derive
	$|L_\xm \tilde W|\lesssim d_K^{-1}\tilde W$ in $\Omega \setminus K$.
	Hence we can write \eqref{10} as follows
	\bel{10'}
	-L_\mu v = fd_K^{-(\ap+1)} \quad \text{in } \Omega \setminus K,
	\ee
	for some $f \in L^\infty(\Omega)$ with $\| f\|_{L^\infty(\Omega)} \lesssim \|h\|_{C^2(\overline \Omega)}$. By Lemma \ref{existence2}, there exists a unique solution $v$ of \eqref{10} that satisfies
	\be\label{veryweakest5}
	|v(x)|\lesssim ||h||_{C^2(\overline{\xO})}d(x)d_K(x)^{-\xg} \quad \forall x\in \xO\setminus K
	\ee
	for any $\gamma \in [\am,\ap) \cap (\ap-1,\ap)$.
	Thus
	\be \label{tofragma}
	\left|\frac{u(x)}{\tilde W(x)}-h(x)\right|\lesssim ||h||_{C^2(\overline{\xO})}d(x)d_K(x)^{\ap-\xg} \quad \forall x\in \xO\setminus K.
	\ee
	This implies \eqref{bdrcond2} and \eqref{ektimisi}.
	
	If $h\in C(\partial\xO\cup K)$ then we can find a sequence $\{h_n\}_{n=1}^\infty$ of smooth functions in $\partial\xO\cup K$ such that $h_n\rightarrow h$ in $L^\infty(\partial\xO\cup K).$ Then we can find a function $H_m\in C^2(\overline{\xO})$
	with value $h_n$ on $\partial\xO\cup K,$ and $||H_n||_{L^\infty(\overline{\xO})}\leq C||h_n||_{L^\infty(\partial\xO\cup K)}$ for some constant $C$ independent of $n$. By the previous case there exists a unique $L_\mu$-harmonic function $u_n$  satisfying
	\begin{align}
	\left|\frac{u_n(x)}{\tilde W(x)}-H_n(x)\right|\lesssim ||H_n||_{C^2(\overline{\xO})}d(x)d_K(x)^{\ap-\xg} \quad\forall x\in \xO\setminus K,\label{tofragma2}
	\end{align}
	where the implicit constant is independent of $n$. Thus $u_n\rightarrow u$ locally uniformly in $C^2(\xO\setminus K).$
	
	By \eqref{tofragma} and Lemma \ref{comparison}, we can easily show that
	$$\left| \frac{u_n(x)-u_m(x)}{\tilde W(x)}\right|\lesssim ||h_n-h_m||_{L^\infty(\partial\xO\cup K)} \quad \forall x \in \Omega \setminus K.$$
	Now, let $y\in \partial\xO\cup K.$ Then
	\begin{align*}
	\left|\frac{u(x)}{\tilde W(x)}-h(y)\right|
	\leq \left|\frac{u(x)-u_n(x)}{\tilde W(x)}\right|+\left|\frac{u_n(x)}{\tilde W(x)}-h_n(y)\right|+\left|h_n(y)-h(y)\right|.
	\end{align*}
	The result follows by letting successively $x\to y$ and $n\to\infty$.
\end{proof}

%%%%%%%%%%%%%%%%%%%%%%%%%%%%%%%%%%%%%%%%%%%%%
%%%%%%%%%%%%%%%%%%%%%%%%%%%%%%%%%%%%%%%%%%%%%

\section{Martin kernel}
In this section, several results can be obtained by using similar arguments as in \cite{GkV,caffa,hunt} with minor modifications, hence we will point out only precise references where the arguments can be found instead of providing detailled proofs. When the adaptation is not trivial, we offer detailled demonstration.

\subsection{$L_\mu$-harmonic measure}

Let $x_0\in\Omega \setminus K$ be a fixed reference point and $x \in \Omega \setminus K$. Let $\omega^{x_0}$ and $\omega^x$ the $L_\mu$-harmonic measures in $\partial \Omega \cup K$ relative to $x_0$ and $x$ respectively (the definition of $L_\mu$-harmonic measure is given after Definition \ref{kernel}). Thanks to the Harnack inequality, the measures $\xo^x$ and $\xo^{x_0}$, where $x_0,\;x\in \xO \setminus K$, are mutually absolutely continuous. For every fixed $x \in \Omega \setminus K$, we denote the Radon-Nikodyn derivative by $K_\mu(x,y)$ as in \eqref{RN-derivative}.

Let $\xi\in\partial\xO\cup K.$ We set $\xD_r(\xi)=(\partial\xO\cup K)\cap B_r(\xi)$ and
$x_r=x_r(\xi)\in\xO \setminus K$ such that $d(x_r)=|x_r-\xi|=r$ if $\xi\in\partial\xO$ or $d_K(x_r)=|x_r-\xi|=r$ if $\xi\in K.$ Also, if $\xi\in \partial\xO$ then $x_{r}(\xi)=\xi-r{\bf n}_{\xi}$ where $\bf n_{\xi}$ is the unit outward normal vector to $\prt\Gw$ at $\xi$. We recall that $\beta_0>0$ is the constant given in \eqref{straigh}.

\begin{lemma}\label{lem2.1}
	For any $0<r\leq \frac{\beta_0}{4}$ and $\xi\in\partial \Omega \cup K$, there holds
	\bel{00}
	\frac{\omega^x(\xD_r(\xi))}{\tilde W(x)}\gtrsim 1\qquad\forall x\in(\Omega \setminus K)\cap B_{\frac{r}{2}}(\xi).
	\ee
%	The implicit constant depends on $\Omega,K,\mu$.
\end{lemma}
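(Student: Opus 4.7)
The approach is to bound $\omega^x(\Delta_r(\xi))$ from below by the value at $x$ of an $L_\mu$-harmonic function $v_\phi$ with a convenient continuous boundary datum, and then show $v_\phi/\tilde W \gtrsim 1$ on $B_{r/2}(\xi) \cap (\Omega\setminus K)$ by combining the dynamic boundary condition with interior Harnack propagation.

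First I would choose $\phi \in C(\partial\Omega \cup K)$ with $0 \leq \phi \leq \chi_{\Delta_r(\xi)}$ and $\phi \equiv 1$ on $\Delta_{3r/4}(\xi)$. Lemma \ref{mainlemma1} produces a unique $L_\mu$-harmonic $v = v_\phi$ satisfying $v(x)/\tilde W(x) \to \phi(y)$ uniformly as $x \to y \in \partial\Omega \cup K$. Positivity of $\omega^x$ together with $\phi \leq \chi_{\Delta_r(\xi)}$ gives
\[
v(x) = \int \phi\,d\omega^x \leq \omega^x(\Delta_r(\xi)),
\]
reducing the task to showing $v(x)/\tilde W(x) \gtrsim 1$ on $B_{r/2}(\xi) \cap (\Omega\setminus K)$.

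Next I split the half-ball into a near-boundary shell and an interior bulk. For any $x \in B_{r/2}(\xi) \cap (\Omega\setminus K)$, its nearest boundary point $y$ satisfies $|y-\xi| \leq 2|x-\xi| < r$; if additionally $\dist(x,\partial\Omega\cup K) < r/8$, then $|y-\xi| < 3r/4$, forcing $\phi(y) = 1$. The uniform convergence $v/\tilde W \to \phi$ thus yields $\delta_0 \in (0, r/8)$ such that $v(x)/\tilde W(x) \geq 1/2$ for every $x \in B_{r/2}(\xi) \cap (\Omega\setminus K)$ with $\dist(x,\partial\Omega\cup K) < \delta_0$. On the complementary compact set $F := \{x \in B_{r/2}(\xi) \cap (\Omega\setminus K) : \dist(x,\partial\Omega\cup K) \geq \delta_0\}$, the potential $\mu/d_K^2$ is bounded, $L_\mu$ is uniformly elliptic on a neighborhood of $F$, and interior Harnack chained from a reference point $x^*$ on the inner edge of the $\delta_0$-shell (where $v(x^*) \geq \tilde W(x^*)/2 \gtrsim 1$) to an arbitrary $x \in F$ yields $v \gtrsim 1 \gtrsim \tilde W$ on $F$, completing the desired lower bound.

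The main obstacle is ensuring the implicit constant depends only on $N,\Omega,K,\mu$: both $\delta_0$ and the Harnack chain length a priori depend on $r$ and $\xi$. To achieve uniformity, I would rescale by $r$ near $\xi$ using the $C^2$-flattening \eqref{straigh} of $K$ (when $\xi \in K$) or a local graph parametrization of $\partial\Omega$ (when $\xi \in \partial\Omega$); the operator transforms to one of the same form on a unit-sized model domain with lower-order corrections controlled uniformly by the bound \eqref{supGamma} on $\|K\|_{C^2}$. The Harnack inequality and the quantitative form of the dynamic boundary condition from Lemma \ref{mainlemma1} then have scale-invariant constants depending only on the allowed parameters. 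For $\xi \in \partial\Omega$ with $r$ smaller than $\dist(\xi, K)$, the argument reduces to the classical lower bound for harmonic measure of uniformly elliptic operators in $C^2$ domains (since $\tilde W \equiv 1$ and the potential is bounded on $B_r(\xi)$).
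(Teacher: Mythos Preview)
Your reduction---choosing a cutoff $\phi$ supported in $\Delta_r(\xi)$ with $\phi\equiv 1$ on $\Delta_{3r/4}(\xi)$ and bounding $\omega^x(\Delta_r(\xi))$ below by $v_\phi(x)$---is exactly how the paper begins. The difficulty, which you correctly identify, is the uniformity of the constant in $r$ and $\xi$.

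Your proposed fix via rescaling has a real gap. The function $v_\phi$ is determined by a \emph{global} boundary value problem on $\Omega\setminus K$; restricting to $B_r(\xi)$ and rescaling leaves you with uncontrolled data on the lateral piece $\partial B_r(\xi)\cap(\Omega\setminus K)$, so you do not obtain a self-contained model problem on which Lemma~\ref{mainlemma1} could be reapplied. Moreover, in the critical case $\mu=H^2$ one has $\tilde W=d_K^{-H}|\ln d_K|$, which is not homogeneous under dilation: the logarithm produces an additive shift $|\ln r|$ that must be tracked, and your sketch does not address this. Consequently your $\delta_0$ genuinely depends on $r$ and the Harnack chain length can blow up as $r\to 0$.

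The paper avoids rescaling altogether by working with the \emph{difference} $v_1-v_\phi$, where $v_1$ solves the problem with datum $\equiv 1$. This function is nonnegative, $L_\mu$-harmonic, and satisfies $(v_1-v_\phi)/\tilde W\to 0$ on $(\partial\Omega\cup K)\cap B_{3r/4}(\xi)$, so the boundary Harnack inequality \eqref{harnack} of Lemma~\ref{lemharnack} applies with a constant depending only on $N,\Omega,K,\mu$ (not on $r,\xi$): for all $x,y\in B_{r/2}(\xi)\cap(\Omega\setminus K)$,
\[
\phi_\mu(x)^{-1}\bigl(v_1(x)-v_\phi(x)\bigr)\ \lesssim\ \phi_\mu(y)^{-1}\bigl(v_1(y)-v_\phi(y)\bigr).
\]
Fixing $y$ with $d_K(y)\approx r$ and using the quantitative bound $v_1/\tilde W\geq\tfrac12$ on a fixed neighbourhood of $K$ (from estimate \eqref{tofragma} in the proof of Lemma~\ref{mainlemma1}), one deduces $v_\phi/\tilde W\geq\tfrac14$ on the layer $\{d_K<r/D_0\}\cap B_{r/2}(\xi)$ with $D_0$ independent of $r$. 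In the critical case this step requires comparing $|\ln d_K(y)|/|\ln d_K(x)|$, which is where the logarithm is handled explicitly. The remaining region $\{d_K\geq r/D_0\}\cap B_{r/2}(\xi)$ is then covered by a Harnack chain of length bounded by $D_0$, giving the uniform constant. This use of the boundary Harnack for $v_1-v_\phi$ is the missing ingredient in your argument.
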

\begin{proof}
We consider only the case  $\mu=H^2$ and $\xi\in K$ since the proof in the other cases is very similar. Let $0\leq h\leq1$ be a smooth function with compact support in $\xD_r(\xi)$ such that $h=1$ on $\overline{\xD_\frac{3r}{4}}(\xi)$. Let $v_h$ be the unique solution of \eqref{linear} and $v_1$ is the solution with $h=1$. Then $v_1\geq v_h$  and
	$$\lim_{x\in\xO\setminus K,\;x\rightarrow x_0}\frac{v_1(x)-v_h(x)}{\tilde W(x)}=0\qquad\forall x_0\in(\Omega \setminus K)\cap B_{\frac{3r}{4}}(\xi).$$
	By Lemma \ref{lemharnack} and the fact that $\phi_{\mu }\approx d_K^{-\am}$, it follows
	$$d_K(x)^{\am}(v_1(x)-v_h(x))\lesssim d_K(y)^{\am}(v_1(y)-v_h(y))\qquad\forall x,y\in (\xO\setminus K)\cap \overline{B_{\frac{r}{2}}(\xi)}.$$
	
	By \eqref{ektimisi}, we have
	$$0\leq d_K^{H}(x)(v_1(x)-v_h(x))\lesssim d_K^{H}(y)(v_1(y)-v_h(y))\lesssim |\ln d_K(y) |,\forall x,y\in (\Omega \setminus K)\cap \overline{B_{\frac{r}{2}}(\xi)}.$$
	Thus, combining the above estimates, we have that
	$$\frac{d_K^{H}(x)v_1(x)}{|\ln d_K(x)|}-\frac{1}{c'}\frac{|\ln d_K(y) |}{|\ln d_K(x) |}\leq \frac{d_K^H(x) v_h(x)}{|\ln d_K(x)|}.$$
	Now in view of the proof of Lemma \ref{mainlemma1}, there exists $\xe_0>0$ such that $$\frac{d_K^{H}(x)v_1(x)}{|\ln d_K(x)|}>\frac{1}{2}\qquad\forall x\in K_{\xe_0} .$$
	Thus if we choose $y$ such that $d(y)=\frac{r}{4},$ there exists a constant $D_0=D_0(\beta_0,c',\varepsilon_0)>0$ such that
	$$\frac{1}{c'}\frac{|\ln d_K(y) |}{|\ln d_K(x) |}= \frac{1}{c'}\frac{|\ln \frac{r}{4} |}{|\log d_K(x) |}\leq \frac{1}{c'}\frac{|\ln \frac{r}{4} |}{|\ln \frac{r}{D_0} |}
	\leq \frac{1}{4}\qquad\forall x\in K_{\frac{r}{D_0}}$$
	and
	\bel{111}
	d_K^{H}(x)\frac{v_h(x)}{|\ln d_K(x)|}\geq\frac{1}{4}\qquad\forall x\in \overline{B_{\frac{r}{2}}}(\xi)\cap K_{\frac{r}{D_0}}.
	\ee
	In particular
	\bel{112}
	(a^*r)^H\frac{v_h(x_{a^*r}(\xi))}{|\ln (a^*r)|}\geq\frac{1}{4},
	\ee
	where $a^*=(\max\{2,D_0\})^{-1}$. If $D_0\leq 2$ we obtain the claim. If $D_0> 2$, set $k^*=\BBE[\frac{D_0}{2}]+1$ (we recall that $\BBE[x]$ denotes the largest integer less than or equal to $x$). If $x\in \overline{B_{\frac{r}{2}}}(\xi)\cap (K_{\frac{r}{D_0}})^c$ there exists a chain of at most $4k^*$ points $\{z_j\}_{j=0}^{j_0}$ such that $z_j\in\overline{B_{\frac{r}{2}}}(\xi)\cap\Gw$, $d_K(z_j)\geq a^*r$, $ z_0=x_{a^*r}(\xi)$, $z_{j_0}=x$ and  $|z_j-z_{j+1}|\leq \frac{a^*r}{4}$. By Harnack inequality (applied $j_0$ times)
	\be\label{113}
	v_h(x_{a^*r}(\xi))\leq c v_h(x).
	\ee
	Since $d_K(x_{a^*r}(\xi))=a^*r\leq d_K(x)$,
	we obtain finally
	\bel{114}
	\frac{1}{4}\leq (a^*r)^H\frac{v_h(x_{a^*r}(\xi))}{|\ln (a^*r)|}\leq c \frac{\gw^x(\Gd_r(\xi))}{\tilde W(x)}\qquad
	\forall x\in (\Gw\setminus K)\cap B_{\frac{r}{2}}(\xi).
	\ee
\end{proof}

\begin{lemma}\label{lemharn}
	For any $\xi\in\prt\Gw\cup K$ and $0<r\leq s\leq \frac{\beta_0}{4}$,
	\be\label{carle}
	\frac{\omega^x(\Delta_{r}(\xi))}{\tilde W(x)}\lesssim \frac{ \omega^{x_{s}(\xi)}(\Delta_{r}(\xi))}{\tilde W(x_{s}(\xi))}\qquad\forall x\in (\Omega \setminus K)\setminus B_{s}(\xi).
	\ee
	%The implicit constant depends on $\Omega, K,\mu$.
\end{lemma}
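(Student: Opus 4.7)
The strategy is the classical Carleson/Hunt boundary estimate, adapted to the singular setting where the full boundary $\partial\Omega\cup K$ has two strata of different codimension.

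Fix $\xi\in\partial\Omega\cup K$ and $0<r\le s\le\beta_0/4$, and set $v(x):=\omega^{x}(\Delta_{r}(\xi))$. From the construction of the $L_\mu$-harmonic measure through Lemma \ref{mainlemma1} (apply it to a monotone sequence of continuous boundary data approximating $\mathbf{1}_{\Delta_r(\xi)}$), $v$ is a nonnegative $L_\mu$-harmonic function in $\Omega\setminus K$ which vanishes in the $\tilde W$-normalized sense on $(\partial\Omega\cup K)\setminus\overline{\Delta_{r}(\xi)}$; in particular, since $r\le s$,
\[
\lim_{\mathrm{dist}(x,F)\to 0}\frac{v(x)}{\tilde W(x)}=0\quad\text{for every compact } F\subset(\partial\Omega\cup K)\setminus B_{s}(\xi).
\]

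\textbf{Step 1 (comparison on the inner sphere).} I would prove the intermediate estimate
\[
\frac{v(y)}{\tilde W(y)}\ \lesssim\ \frac{v(x_{s}(\xi))}{\tilde W(x_{s}(\xi))}\qquad\forall\,y\in\partial B_{s}(\xi)\cap(\Omega\setminus K),
\]
with an implicit constant independent of $r,s,\xi$. Cover $\partial B_{s}(\xi)\cap(\Omega\setminus K)$ by finitely many (uniformly in $s$) balls of radius $cs$ and connect them to $x_{s}(\xi)$ by a Harnack chain. For points $y$ at distance $\gtrsim s$ from $\partial\Omega\cup K$, the interior Harnack inequality gives $v(y)\approx v(x_{s}(\xi))$ and, by construction of $\tilde W$ in \eqref{tildeW} together with the eigenfunction bound \eqref{eigenfunctionestimates}, $\tilde W(y)\approx\tilde W(x_{s}(\xi))$, so the ratios are comparable. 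For $y$ close to some $\xi'\in\partial\Omega\cup K$ with $|\xi'-\xi|\geq s/2$, the ball $B_{cs}(\xi')$ is disjoint from $\overline{\Delta_{r}(\xi)}$ (since $r\le s$) and $v$ satisfies \eqref{bdrcond} there; thus the boundary Harnack inequality \eqref{harnack} yields $v(y)/\phi_{\mu}(y)\lesssim v(y^{*})/\phi_{\mu}(y^{*})$ for an interior reference point $y^{*}\in B_{cs}(\xi')$ at distance $\approx s$ from $\partial\Omega\cup K$, which in turn can be chained to $x_{s}(\xi)$ by interior Harnack. Converting between $\phi_{\mu}$ and $\tilde W$ on this portion is achieved via the explicit asymptotics $\phi_{\mu}\approx d\,d_{K}^{-\am}$ globally together with $\tilde W\approx d_{K}^{-\ap}$ near $K$ (and $\tilde W\approx 1$ near $\partial\Omega$); on the specific scale $B_{cs}(\xi')$ the ratio $\phi_{\mu}/\tilde W$ is comparable at $y$ and at $y^{*}$ up to a constant depending only on the geometry.

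\textbf{Step 2 (extension via the maximum principle).} Set $D:=(\Omega\setminus K)\setminus\overline{B_{s}(\xi)}$. On $D$, $v$ is $L_\mu$-harmonic; its boundary (in the sense of Lemma \ref{comparison}) consists of $\partial B_{s}(\xi)\cap(\Omega\setminus K)$, where Step 1 applies, and $(\partial\Omega\cup K)\setminus B_{s}(\xi)$, where $v/\tilde W\to 0$. Let $M:=v(x_{s}(\xi))/\tilde W(x_{s}(\xi))$. I would consider the auxiliary function $u:=v-c\,M\,\tilde W$ (with $c$ the constant from Step 1) and verify that $u$ is $L_\mu$-subharmonic in $D$ and satisfies $\limsup u/\tilde W\le 0$ on $\partial D$. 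Lemma \ref{comparison} then gives $u\le 0$ in $D$, proving \eqref{carle}. Since $\tilde W$ is not strictly $L_\mu$-superharmonic globally, in this step $\tilde W$ must in fact be replaced by a genuine $L_\mu$-superharmonic majorant of comparable size, built from the barriers $\eta_{\ap,\varepsilon}$ and $\zeta_{\scaleto{-}{3pt},\varepsilon}$ of Lemma \ref{subsup} (patched with the constant $1$ outside $K_{\beta_0/2}$ via $\eta_{\beta_0}$, exactly as in \eqref{tildeW}), so that $-L_\mu\tilde W\geq -c\,d_K^{-1}\tilde W$ is controlled; this requires $\beta_0$ and $\varepsilon$ to be chosen small enough in terms of $\mu$.

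\textbf{Main obstacle.} The delicate step is Step 1: the natural normalization provided by the boundary Harnack estimate \eqref{harnack} is by the eigenfunction $\phi_{\mu}$, whereas the statement involves the quite different function $\tilde W$. The two functions have genuinely different scaling near $K$ (where $\phi_{\mu}\approx d\,d_K^{-\am}$ while $\tilde W\approx d_K^{-\ap}$) versus near $\partial\Omega$ (where $\phi_{\mu}\approx d$ while $\tilde W\approx 1$), and the spherical shell $\partial B_{s}(\xi)\cap(\Omega\setminus K)$ typically straddles both regimes. Bookkeeping the ratio $\phi_{\mu}/\tilde W$ along each Harnack chain, and verifying that $\tilde W$ can be replaced by a true $L_\mu$-superharmonic function in Step 2, is the technical heart of the proof.
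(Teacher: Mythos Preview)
Your overall two-step strategy (Carleson estimate on the inner sphere, then maximum principle to propagate outward) is exactly the paper's, but the paper bypasses your ``main obstacle'' with a trick you are missing. Rather than working with $\tilde W$, which as you note is neither $L_\mu$-harmonic nor $L_\mu$-superharmonic, the paper replaces $\tilde W$ by $v_1$, the unique $L_\mu$-harmonic function with $\lim v_1/\tilde W=1$ on $\partial\Omega\cup K$ furnished by Lemma~\ref{mainlemma1}. Since $v_1\approx\tilde W$ globally (by \eqref{glob} and \eqref{glob00}), the inequality \eqref{carle} is equivalent to the same estimate with $v_1$ in place of $\tilde W$. But now the ratio $w_h:=v_h/v_1$ (with $v_h$ the solution for a continuous $h$ supported in $\Delta_r(\xi)$) satisfies the divergence-form equation $-\div(v_1^{2}\nabla w_h)=0$ in $(\Omega\setminus K)\setminus\overline{B_s(\xi)}$, is globally bounded with $0\le w_h\lesssim 1$, and extends continuously to $\overline\Omega$ with $w_h=0$ on $(\partial\Omega\cup K)\setminus\overline{B_s(\xi)}$. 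The ordinary maximum principle for this equation then gives immediately that the supremum of $w_h$ over the exterior domain is attained on $(\Omega\setminus K)\cap\partial B_s(\xi)$, which is your Step~2 without any need to manufacture an $L_\mu$-superharmonic substitute for $\tilde W$.

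This same substitution also cleans up Step~1: one is reduced to the Carleson estimate $w_h(x)\le C\,w_h(x_s(\xi))$ on the sphere for a bounded solution of $-\div(v_1^{2}\nabla w_h)=0$ vanishing continuously on the adjacent boundary portion, which is the classical Carleson argument (dispatched in the paper by reference to \cite[Lemma~2.20]{GkV}). The bookkeeping you describe, shuttling between the $\phi_\mu$-normalization of \eqref{harnack} and the $\tilde W$-normalization of the statement, becomes unnecessary. Your approach could in principle be pushed through, but the barrier construction you sketch for Step~2 is left vague and would have to be carried out carefully across the patching zone of $\eta_{\beta_0}$; the $v_1$ substitution is both simpler and what the paper actually does.
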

\begin{proof}

	Let $h\in C(\partial\Omega \cup K)$ with compact support in $\xD_r(\xi),$ $h=1$ in $\overline{\Delta}_\frac{r}{2}(\xi)$ and $0\leq h\leq 1$. Let $v_h$ be the unique solution of \eqref{linear}. By Lemma \ref{mainlemma1},  for any  $0<r<\gb_0$,
	\bel{glob}
	\frac{v_h(x)}{\tilde W(x)}\leq\frac{\omega^x(\Delta_r(\xi))}{\tilde W(x)}\leq \frac{\omega^x(\prt\Gw \cup K)}{\tilde W(x)}\lesssim 1 \qquad\forall x\in \Omega \setminus K.
	\ee
	By Lemma \ref{mainlemma1} , there holds
	\begin{align}\label{glob00}
	\lim_{\min\{d(x),d_K(x) \} \to 0}\frac{v_1(x)}{\tilde W(x)}&=1.
	\end{align}
	Thus we can replace $\tilde W$ by $v_1$ (the unique solution of \eqref{linear} with $h \equiv 1$) in \eqref{carle}. Since $w_h=\frac{v_h(x)}{v_1(x)}$ is H\"older continuous in $\overline\Gw$ and  satisfies
	\bel{glob01} \left\{ \BAL
	-\div (v_1^2\nabla w_h)&=0\qquad &&\text{in }(\Gw\setminus K)\setminus \overline {B_{s}}(\xi)\\
	0\leq w_h &\leq 1 &&\text{in }(\Gw\setminus K)\setminus \overline {B_{s}}(\xi)\\
	w_h&=0\qquad \qquad &&\text{in }(\prt\Gw\cup K)\setminus \overline {B_{s}}(\xi),
	\EAL \right. \ee
	the maximum of $w_h$ is achieved on $(\Gw\setminus K) \cap\prt B_{s}(\xi)$, therefore it is sufficient to prove the Carleson estimate
	$$ w_h(x)\leq C w_h(x_{s}(\xi)) \quad  \forall x\in (\Omega \setminus K) \cap\prt B_{s}(\xi).$$
	If $x$ such that $|x-\xi|=s$ is ``far" from  $\prt\Gw\cup K$, $w_h(x)$ is ``controlled" by $w_h(x_{s}(\xi))$ thanks to Harnack inequality, while
	if it is close to $\prt\Gw\cup K$, $w_h(x)$ is ``controlled'' by the fact that it vanishes on $(\prt\xO\cup K) \cap\prt B_{s}(\xi)$. \smallskip

The rest of the proof is very similar to the proof of \cite[Lemma 2.20]{GkV} and we omit it.
	
\end{proof}
\begin{theorem}\label{lem2.2*} Assume $\mu \leq H^2$.
	For any $0<r\leq\frac{\beta_0}{4}$ and $\xi\in K$, the followings hold.
	
	(i) If $\mu=H^2$ and $\xi\in K$ then
	\bel{Rep0}\BAL
	\omega^x(\xD_r(\xi))\approx  r^{N-2-H}|\ln r|G_{H^2}(x_r(\xi),x) \quad \forall x\in(\Omega \setminus K)\setminus B_{4r}(\xi).
	\EAL
	\ee
	
	(ii) If $\xm<H^2$ and $\xi\in K$ then
	\be\label{Rep0b}\BAL
	\omega^x(\xD_r(\xi))\approx  r^{N-2-\ap}G_{\mu}(x_r(\xi),x) \quad \forall x\in(\Omega \setminus K)\setminus B_{4r}(\xi).
	\EAL
	\ee

    (iii)  If $\xm\leq H^2$ and $\xi\in \partial\xO$ then
    $$
    \xo^x(\xD_r(\xi))\approx r^{N-2} G_{\xm}(x_r(\xi),x) \quad \forall x\in(\xO\setminus K)\setminus B_{4r}(\xi).
    $$	
%The similarity constants depend on $\Omega,K$ and $\mu$.
\end{theorem}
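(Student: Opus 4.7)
The plan is to prove all three estimates by a common comparison argument on the annulus $D:=(\Omega\setminus K)\setminus\overline{B_{2r}(\xi)}$. Set
\[
u(x):=\omega^x(\Delta_r(\xi)),\qquad v(x):=G_\mu(x_r(\xi),x),
\]
so that $u$ and $v$ are positive $L_\mu$-harmonic on $D$ (since $x_r(\xi)\in B_{2r}(\xi)$), and both have vanishing $\tilde W$-normalised trace on $(\partial\Omega\cup K)\setminus\overline{B_{2r}(\xi)}$. For $u$ this follows from realizing it as the $L_\mu$-harmonic extension of (an approximation of) $\chi_{\Delta_r(\xi)}$ via Lemma~\ref{mainlemma1} together with the maximum principle of Lemma~\ref{comparison}; for $v$ it can be read off directly from the two-sided estimates of Theorem~\ref{Greenkernel}, which show $G_\mu(x_r(\xi),x)/\tilde W(x)\to 0$ as $x$ approaches any part of $\partial\Omega\cup K$ outside a neighbourhood of $\xi$.

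The heart of the proof is to reduce the comparison on all of $D$ to one at the single reference point $x_{2r}(\xi)$. I first show that on the inner sphere $\partial B_{2r}(\xi)\cap(\Omega\setminus K)$ both $u/u(x_{2r}(\xi))$ and $v/v(x_{2r}(\xi))$ are $\approx 1$, by connecting any point of this sphere to $x_{2r}(\xi)$ through a Harnack chain of $O(1)$ balls of radius $\approx r$: interior Harnack is used when the ball lies at distance $\gtrsim r$ from $\partial\Omega\cup K$, and the boundary Harnack inequality \eqref{harnack} of Lemma~\ref{lemharnack} when it does not. Setting $\lambda(r):=u(x_{2r}(\xi))/v(x_{2r}(\xi))$, I deduce $c_1\lambda(r) v\le u\le c_2\lambda(r) v$ on $\partial B_{2r}(\xi)\cap(\Omega\setminus K)$. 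The functions $u-c_2\lambda(r)v$ and $c_1\lambda(r)v-u$ are then $L_\mu$-subharmonic on $D$, nonpositive on that sphere, and have nonpositive $\tilde W$-trace on $(\partial\Omega\cup K)\setminus\overline{B_{2r}(\xi)}$, so Lemma~\ref{comparison} applied in the subdomain $D$ yields $u(x)\approx\lambda(r)v(x)$ for every $x\in D\supset(\Omega\setminus K)\setminus B_{4r}(\xi)$.

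It only remains to compute $\lambda(r)$ in each configuration. For the numerator I combine $u(y)\gtrsim\tilde W(y)$ for $y\in B_{r/2}(\xi)$ from Lemma~\ref{lem2.1}, transported to $x_{2r}(\xi)$ by a short Harnack chain, with the trivial upper bound $u(x)\le v_1(x)\lesssim\tilde W(x)$ coming from Lemma~\ref{mainlemma1}; this gives $u(x_{2r}(\xi))\approx\tilde W(x_{2r}(\xi))$. For the denominator I substitute $|x_r(\xi)-x_{2r}(\xi)|\approx r$, $d_K(x_r(\xi))=r$, $d_K(x_{2r}(\xi))=2r$, and $d(x_r(\xi)),d(x_{2r}(\xi))\gtrsim 1$ (or the analogous distances if $\xi\in\partial\Omega$) directly into Theorem~\ref{Greenkernel}. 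A short case-by-case calculation then produces the prefactors $r^{N-2-H}|\ln r|$, $r^{N-2-\ap}$, and $r^{N-2}$ asserted in (i), (ii), and (iii) respectively.

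The principal obstacle is carrying out the Harnack-chain argument uniformly at scale $r$ in the presence of the inverse-square potential $\mu/d_K^2$: the chain must remain at distance $\approx r$ from $\partial\Omega\cup K$ and invoke \eqref{harnack} with a constant independent of $r$, and the application of Lemma~\ref{comparison} has to be adapted from the full $\Omega\setminus K$ to the annular subdomain $D$. In the critical case $\mu=H^2$ the weight $\tilde W$ carries an additional $|\ln d_K|$ factor which must be propagated through every comparison; this is precisely the source of the $|\ln r|$ in case (i).
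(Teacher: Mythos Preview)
Your overall strategy matches what the paper invokes (it defers to \cite[Theorem~2.22]{GkV}): compare $\omega^{\cdot}(\Delta_r(\xi))$ and $G_\mu(x_r(\xi),\cdot)$ on the annulus via boundary Harnack plus the maximum principle, then read off the scaling constant at a reference point. However, your key intermediate claim --- that on $\partial B_{2r}(\xi)\cap(\Omega\setminus K)$ each of $u/u(x_{2r}(\xi))$ and $v/v(x_{2r}(\xi))$ is $\approx 1$ --- is false. The sphere $\partial B_{2r}(\xi)$ meets $\partial\Omega\cup K$ whenever $\xi\in\partial\Omega$, and whenever $\xi\in K$ with $k\ge 1$ (since $K$ is a $k$-submanifold through $\xi$). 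Near those intersection points $u$ and $v$ behave like $\phi_\mu\approx d_K^{-\am}$, not like constants, so for instance $u(x)/u(x_{2r}(\xi))\approx (r/d_K(x))^{\am}$, which is unbounded on the sphere. The inequality \eqref{harnack} you invoke says that $u/\phi_\mu$ is approximately constant in small boundary balls, not $u$ itself; a Harnack chain built from it cannot give $u(x)\approx u(x_{2r}(\xi))$.

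The repair is immediate and is the standard move: what one actually establishes on the inner sphere is that the \emph{ratio} $u/v$ is $\approx\lambda(r)$. Apply \eqref{harnack} to $u$ and to $v$ separately near each point $\xi'\in(\partial\Omega\cup K)\cap\partial B_{2r}(\xi)$ (both vanish on $(\partial\Omega\cup K)\cap B_{r/2}(\xi')$ since $B_{r/2}(\xi')\cap\overline{B_r(\xi)}=\emptyset$), obtaining $u/\phi_\mu$ and $v/\phi_\mu$ each approximately equal to their value at $x_{2r}(\xi)$, with constants independent of $r$. Dividing gives $u/v\approx\lambda(r)$ on $\partial B_{2r}(\xi)\cap(\Omega\setminus K)$, and your maximum-principle step on $D$ then applies verbatim to $u-c_2\lambda(r)v$ and $c_1\lambda(r)v-u$. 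This is precisely the Carleson-type comparison encoded in Lemma~\ref{lemharn}, and with this correction your computation of $\lambda(r)$ in each case goes through as written.
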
  %%%%%%%%%%%%%%%%%%%%%%%%%%%%%%%%%%%%%%%%%%%%%%%%%%%%%%%%%%%%%%PROOF%%%%%%%%%%%%%%%%%%%%%%%%%%%%%%%%%%%%%%%%%%%%%%%%%%%%%%%%%%%%%%%%%%%%%%%%%%%%%%%%%%%%%%%%%%%%%%%%%
\begin{proof}
The proof can be proceeded as in the proof of \cite[Theorem 2.22]{GkV} with minor modifications, hence we omit it.
\end{proof}\medskip

As a consequence of Theorem \ref{lem2.2*} and the Harnack inequality, $L_\mu$-harmonic measures possess the doubling property.
%%%%%%%%%%%%%%%%%%%%%%%%%%%%%%%%%%%%%%%%%%%%%%%%%%%%%%%%%%%%%%%%%%%%%%%%%%%%%%%%%%%%%%%%%%%%%%%%%%%%%%%%%%%%%%%%%%%%%%%%%%%%%%%%%%%%%%%%%%%%%%%%%%%%%%%%%%%%%%%%%%%%%%%%%%%%%%
\begin{theorem}\label{Th10}
	Let $\xm \leq H^2$.  For any $0<r\leq \frac{\gb_0}{4}$, there holds
	$$\xo^x(\xD_{2r}(\xi))\lesssim \xo^x(\xD_{r}(\xi))\qquad\forall x\in(\xO\setminus K)\setminus B_{4r}(\xi).$$
	%The implicit constant depends only on $\xO,K$ and $\mu$.
\end{theorem}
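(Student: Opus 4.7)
The plan is to deduce the doubling property from the Green-function representation of the $L_\mu$-harmonic measure in Theorem~\ref{lem2.2*}, combined with Harnack inequality applied to $y\mapsto G_\mu(y,x)$ along a short chain joining $x_r(\xi)$ to $x_{2r}(\xi)$.

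First I would apply Theorem~\ref{lem2.2*} at radii $r$ and $2r$ and form the ratio. In each of the three cases listed there, the ratio takes the form
\[
\frac{\omega^x(\Delta_{2r}(\xi))}{\omega^x(\Delta_r(\xi))} \approx S(r)\,\frac{G_\mu(x_{2r}(\xi),x)}{G_\mu(x_r(\xi),x)},
\]
with $S(r)=2^{N-2-\ap}$ when $\mu<H^2$ and $\xi\in K$, $S(r)=2^{N-2}$ when $\xi\in\partial\Omega$, and $S(r)=2^{N-2-H}\,|\ln(2r)|/|\ln r|$ in the critical case $\mu=H^2$, $\xi\in K$. For $r\in(0,\beta_0/4]$, the quotient $|\ln(2r)|/|\ln r|=1-(\ln 2)/|\ln r|$ (valid for $r\leq 1/2$) is bounded above by a constant depending only on $\beta_0$; it tends to $1$ as $r\downarrow 0$ and stays bounded on any compact subinterval avoiding $r=1$. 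Hence $S(r)$ is bounded by a constant depending only on $N,\mu,k,\beta_0$, and the problem is reduced to establishing
\[
G_\mu(x_{2r}(\xi),x) \lesssim G_\mu(x_r(\xi),x) \qquad \forall\, x\in(\Omega\setminus K)\setminus B_{4r}(\xi).
\]

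To prove this bound I would argue by a Harnack chain. The function $u(\cdot):=G_\mu(\cdot,x)$ is positive and $L_\mu$-harmonic on $(\Omega\setminus K)\setminus\{x\}$. I join $x_r(\xi)$ to $x_{2r}(\xi)$ by a straight segment of length at most $r$: along the outward normal $-\mathbf{n}_\xi$ when $\xi\in\partial\Omega$ (both endpoints are of the form $\xi-t\mathbf{n}_\xi$, $t\in[r,2r]$), and along a common direction normal to $K$ at $\xi$ when $\xi\in K$. Every point $y$ of this segment lies in $B_{3r}(\xi)$, satisfies $d_K(y)\gtrsim r$ (resp. $d(y)\gtrsim r$), and since $x\notin B_{4r}(\xi)$ also satisfies $|y-x|\geq r$. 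I cover the segment by an $r$-independent number of balls of radius $\rho\sim r$, each contained in $\{d_K\geq r/2\}\cap\{|y-x|\geq r/2\}$. On each such ball the potential $\mu/d_K^2$ is bounded by $4\mu/r^2$, so that $\|\mu/d_K^2\|_{L^\infty}\,\rho^2$ is bounded by a constant depending only on $\mu$; hence the interior Harnack inequality \cite[Theorem 3.7]{FMoT} applies to $u$ with a scale-invariant constant on each ball. Iterating a bounded number of times yields the required bound, and combining with the first step completes the proof.

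The main point requiring care is the scale-invariance of the Harnack constant: one must arrange each ball of the chain to have radius comparable to the local value of $d_K$, so that the quantity $\|\mu/d_K^2\|_{L^\infty}\cdot(\text{radius})^2$ is controlled uniformly in $r$. Once this is arranged, both the number of iterations and the per-ball Harnack constant are independent of $r$ and $\xi$, producing a doubling constant that depends only on $N,\Omega,K,\mu$.
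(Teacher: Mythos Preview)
Your approach is exactly what the paper indicates: it states that Theorem~\ref{Th10} is ``a consequence of Theorem~\ref{lem2.2*} and the Harnack inequality,'' and you have spelled out precisely that argument. One minor technical point you should address: applying Theorem~\ref{lem2.2*} at radius $2r$ literally requires $x\notin B_{8r}(\xi)$ and $2r\le\beta_0/4$, whereas the hypothesis only gives $x\notin B_{4r}(\xi)$ and $r\le\beta_0/4$; the remaining shell $B_{8r}(\xi)\setminus B_{4r}(\xi)$ is handled by one more boundary Harnack step (both $x\mapsto\omega^x(\Delta_r(\xi))$ and $x\mapsto\omega^x(\Delta_{2r}(\xi))$ are positive $L_\mu$-harmonic and vanish, in the normalized sense, on $(\partial\Omega\cup K)\cap(B_{8r}\setminus\overline{B_{2r}})$, so Lemma~\ref{lemharnack} compares their ratio there with its value at a reference point on $\partial B_{8r}(\xi)$, where your argument already applies).
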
\smallskip

\begin{lemma}\label{Lemm10}
	Let $0<r\leq \gb_0$ and $\xi \in \partial \Omega \cup K$. Assume $u$ is a positive $L_{\mu }$-harmonic function in $\Omega \setminus K$ such that $\frac{u}{
	\tilde W}$ can be extended as a continuous function on $C(\overline{\Omega \setminus B_r(\xi)})$ and
	$$\lim_{x \in \Omega \setminus K, x \to z}\frac{u(x)}{\tilde W(x)}=0 \quad \text{uniformly with respect to } z \in(\partial\Omega \cup K)\setminus \overline{B_{r}(\xi)}. $$

	Then
	$$u(x)\approx \frac{u(x_r(\xi))}{\tilde W(x_r(\xi))}\omega^x(\xD_r(\xi))
	\qquad\forall x\in(\Omega \setminus K)\setminus\overline{B_{2r}(\xi)}. $$
%	The similarity constant depends only on $\xO,K$ and $\mu$.
\end{lemma}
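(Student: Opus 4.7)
The strategy is to compare $u$ with the harmonic-measure extension $v(x) := \omega^x(\Delta_r(\xi))$ on the subdomain $G := (\Omega \setminus K) \setminus \overline{B_{2r}(\xi)}$. Both $u$ and $v$ are positive $L_\mu$-harmonic in $\Omega \setminus K$, and both $u/\tilde W$ and $v/\tilde W$ vanish continuously on $(\partial\Omega \cup K) \setminus \overline{B_r(\xi)}$. As a preliminary step I would establish that $v(x_r(\xi)) \approx \tilde W(x_r(\xi))$: the upper bound $v \lesssim \tilde W$ throughout $\Omega \setminus K$ is exactly \eqref{glob} from the proof of Lemma \ref{lemharn}, while Lemma \ref{lem2.1} gives $v(y) \gtrsim \tilde W(y)$ for every $y \in B_{r/2}(\xi) \cap (\Omega \setminus K)$; picking such a $y$ with $d(y,\partial\Omega \cup K) \approx r$ and propagating through a Harnack chain (Lemma \ref{lemharnack}) of uniformly bounded length yields the matching lower bound at $x_r(\xi)$.

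The heart of the proof is the two-sided estimate
\[
u(x) \;\approx\; \frac{u(x_r(\xi))}{v(x_r(\xi))}\, v(x) \qquad \forall\, x \in G,
\]
obtained by a maximum-principle argument. For the upper half, set $M := C_0\,u(x_r(\xi))/v(x_r(\xi))$ with $C_0$ to be fixed, and let $w := Mv - u$ on $G$: this is $L_\mu$-harmonic, and $w/\tilde W \to 0$ along $(\partial\Omega \cup K) \setminus \overline{B_{2r}(\xi)}$. It remains to check that $w \geq 0$ on the inner boundary $\Sigma := \partial B_{2r}(\xi) \cap (\Omega \setminus K)$, after which a version of Lemma \ref{comparison} adapted to $G$ (applied to $\max(u-Mv,0)$ extended by $0$ into $\overline{B_{2r}(\xi)} \cap (\Omega \setminus K)$; valid because $\lambda_\mu > 0$) delivers $w \geq 0$ throughout $G$. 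I split $\Sigma$ into two parts. If $d(x,\partial\Omega \cup K) \geq \theta r$ for a fixed small $\theta$, a Harnack chain of uniformly bounded length linking $x$ to $x_r(\xi)$ through points of comparable distance from the boundary gives $u(x) \lesssim u(x_r(\xi))$ and $v(x) \gtrsim v(x_r(\xi))$. If $d(x,\partial\Omega \cup K) < \theta r$, the nearest point $\xi_x \in \partial\Omega \cup K$ satisfies $|\xi_x - \xi| \geq (2-\theta)r > r$, so both $u$ and $v$ vanish on $(\partial\Omega \cup K) \cap B_{(1-\theta)r}(\xi_x)$. The Carleson-type argument in the proof of Lemma \ref{lemharn} uses only positive $L_\mu$-harmonicity and vanishing of $\cdot/\tilde W$ on the relevant boundary patch, hence applies to both $u$ and $v$; it yields $u(x)/v(x) \lesssim u(y_x)/v(y_x)$ for a suitable interior reference point $y_x$ near $\xi_x$, and a further Harnack chain connecting $y_x$ to $x_r(\xi)$ gives the desired bound. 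Choosing $C_0$ large enough then yields $w \geq 0$ on $\Sigma$. The matching lower bound follows by the same argument with the roles of $u$ and $v$ exchanged.

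The main obstacle is the second case in the comparison on $\Sigma$: one must bound $u$ from above and $v$ from below \emph{simultaneously} near a point of $(\partial\Omega \cup K) \setminus \overline{B_r(\xi)}$. This is precisely the boundary Harnack principle that underlies the Carleson estimate of Lemma \ref{lemharn}, here invoked for two functions at once. Combining the two-sided estimate with the preliminary step gives $u(x) \approx (u(x_r(\xi))/\tilde W(x_r(\xi)))\,\omega^x(\Delta_r(\xi))$ on $G$, which is the claimed statement.
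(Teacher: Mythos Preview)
Your proposal is correct and follows essentially the same approach as the paper. The paper's proof is more condensed: it invokes the boundary Harnack inequality \eqref{harnack} of Lemma \ref{lemharnack} directly (applied to both $u$ and $v=\omega^{\,\cdot}(\Delta_r(\xi))$ against $\phi_\mu$) to obtain $u(x)/v(x)\approx u(x_{2r}(\xi))/v(x_{2r}(\xi))$ on $(\Omega\setminus K)\cap\partial B_{2r}(\xi)$ in one stroke, then passes to $x_r(\xi)$ by interior Harnack, establishes $v(x_r(\xi))\approx\tilde W(x_r(\xi))$ exactly as you do (via Lemma \ref{lem2.1} and \eqref{glob}), and finally propagates the estimate to the full exterior by the maximum-principle argument from the proof of Lemma \ref{lemharn}. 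Your case split on $\Sigma$ into ``far from'' and ``near'' $\partial\Omega\cup K$ is precisely the content of that boundary Harnack step, just written out explicitly rather than cited.
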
  %%%%%%%%%%%%%%%%%%%%%%%%%%%%%%%%%%%%%%%%%%%%%%%%%%%%%%%%%%%%%%PROOF%%%%%%%%%%%%%%%%%%%%%%%%%%%%%%%%%%%%%%%%%%%%%%%%%%%%%%%%%%%%%%%%%%%%%%%%%%%%%%%%%%%%%%%%%%%%%%%%%
\begin{proof}
	It follows from Lemma \ref{lemharnack} that
	$$ \frac{u(x)}{\omega^{x}(\xD_r(\xi))} \approx \frac{u(x_{2r}(\xi))}{\omega^{x_{2r}(\xi)}(\xD_r(\xi))}\qquad\forall x\in(\xO\setminus K) \cap \partial B_{2r}(\xi).$$
	Applying Harnack inequality between $x_{2r}(\xi)$ and $x_{r}(\xi)$ we obtain
	$$ \frac{u(x)}{\omega^{x}(\xD_r(\xi))}\approx \frac{u(x_{r}(\xi))}{\omega^{x_{r}(\xi)}(\xD_r(\xi))} \qquad\forall x\in (\xO\setminus K)\cap \partial B_{2r}(\xi).$$
	Also by Harnack inequality we have that
	$\omega^{x_{r}(\xi)}(\xD_r(\xi))\gtrsim  \omega^{x_{\frac{r}{2}}(\xi)}(\xD_r(\xi)) \gtrsim \tilde W(x_r(\xi))$
	where in the last inequality above we have used Lemma \ref{lem2.1}.
	
	On the other hand, from \eqref{glob}, we have $\omega^{x_{r}(\xi)}(\xD_r(\xi))\lesssim \tilde W(x_r(\xi))$.
	Combining the above inequalities,  we derive
	$$u(x)\approx \frac{u(x_r(\xi))}{\tilde W(x_r(\xi))}\omega^x(\xD_r(\xi))\qquad\forall x\in (\xO\setminus K)\cap \partial B_{2r}(\xi).$$
	The result follows by an argument similar to step 3 in Lemma \ref{lemharn}.
\end{proof}

\subsection{Martin kernel of $-L_{\mu}$} We first give the existence and uniqueness of the kernel function of $-L_\mu$ which is defined in Definition \ref{kernel}.
\begin{proposition} \label{uniq}
	There exists one and only one kernel function for $-L_{\xm }$  with pole at $\xi$ and with basis at $x_0$.
\end{proposition}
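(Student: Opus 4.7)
I would construct $\mathcal{K}(\cdot,\xi)$ as a normalised limit of Green functions. Pick a sequence $\{y_n\}\subset \Omega\setminus K$ with $y_n\to\xi$ and set
$$\mathcal{K}_n(x):=\frac{G_\mu(x,y_n)}{G_\mu(x_0,y_n)},\qquad x\in\Omega\setminus K\setminus\{y_n\}.$$
Each $\mathcal{K}_n$ is positive and $L_\mu$-harmonic in $\Omega\setminus K\setminus\{y_n\}$ with $\mathcal{K}_n(x_0)=1$. Since $y_n$ eventually leaves every compact $E\Subset\Omega\setminus K$, Harnack's inequality gives $\mathcal{K}_n\approx 1$ on $E$ uniformly in $n$, and standard elliptic regularity produces a subsequence converging locally uniformly (and weakly in $H^1_{\mathrm{loc}}(\Omega\setminus K)$) to a positive $L_\mu$-harmonic function $\mathcal{K}$ with $\mathcal{K}(x_0)=1$, establishing (i) and (iii).

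\textbf{Verification of (ii).} Fix $P\in(\partial\Omega\cup K)\setminus\{\xi\}$ and $r\in(0,\beta_0/4)$ with $B_{4r}(P)\cap B_{4r}(\xi)=\emptyset$. For $n$ large, $y_n\notin B_{4r}(P)$, so the two-sided bounds of Theorem \ref{Greenkernel} control $G_\mu(\cdot,y_n)$ on $B_r(P)\cap(\Omega\setminus K)$ uniformly in $n$. A case analysis on the nature of $P$: if $P\in\partial\Omega$, the factor $d(x)d(y_n)/|x-y_n|^N$ together with $\tilde W(x)\approx 1$ near $P$ produces $\mathcal{K}_n(x)/\tilde W(x)\lesssim d(x)$; if $P\in K$ (subcritical case), the factor $d_K(x)^{-\am}$ combined with $\tilde W(x)\approx d_K(x)^{-\ap}$ yields $\mathcal{K}_n(x)/\tilde W(x)\lesssim d_K(x)^{\ap-\am}$; the critical Hardy case is handled analogously using \eqref{Greenestb} and its logarithmic weight. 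The denominator $G_\mu(x_0,y_n)$ can be compared to $G_\mu(x_r(\xi),y_n)$ by Harnack along a chain avoiding $y_n$, keeping all implicit constants independent of $n$. Passing to the limit, and then invoking Lemma \ref{lemharnack} to upgrade pointwise vanishing to a continuous extension of $\mathcal{K}/\tilde W$, establishes (ii).

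\textbf{Uniqueness and main obstacle.} Let $\mathcal{K}_1,\mathcal{K}_2$ be two kernel functions at $\xi$ based at $x_0$ and set $\alpha:=\sup_{\Omega\setminus K}\mathcal{K}_1/\mathcal{K}_2$; its finiteness follows from the boundary Harnack principle near $\xi$ combined with Lemmas \ref{lemharnack} and \ref{Lemm10}. Normalisation at $x_0$ gives $\alpha\geq 1$. Consider $u:=\alpha\mathcal{K}_2-\mathcal{K}_1$, which is nonnegative, $L_\mu$-harmonic in $\Omega\setminus K$, and satisfies $u/\tilde W\to 0$ on $(\partial\Omega\cup K)\setminus\{\xi\}$. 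If $u\not\equiv 0$, then Harnack forces $u>0$ throughout $\Omega\setminus K$, so $\inf(u/\mathcal{K}_2)>0$; but by definition of $\alpha$, $\inf(u/\mathcal{K}_2)=\alpha-\sup(\mathcal{K}_1/\mathcal{K}_2)=0$, and the strong minimum principle combined with a boundary Harnack comparison of $u$ and $\mathcal{K}_2$ at $\xi$ rules out the infimum being attained only asymptotically as $x\to\xi$. The contradiction forces $u\equiv 0$, and evaluating at $x_0$ yields $\alpha=1$, i.e. $\mathcal{K}_1\equiv\mathcal{K}_2$. The hardest step is the boundary Harnack principle at the singular pole $\xi$: it simultaneously ensures finiteness of $\alpha$ and non-degeneracy of $u/\mathcal{K}_2$ as $x\to\xi$, and in the critical case $\mu=H^2$ the logarithmic weight in $\tilde W$ demands additional care in the comparison arguments.
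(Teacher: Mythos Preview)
Your proposal is essentially correct and in the same spirit as the argument the paper defers to (it simply cites \cite[Theorem 3.1]{caffa}), but the uniqueness paragraph contains a genuine logical slip worth fixing.

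\medskip

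\noindent\textbf{Existence.} Your construction via normalised Green quotients is a legitimate alternative to the harmonic-measure-quotient construction implicit in \cite{caffa}; both rely on the same boundary Harnack machinery. Using the explicit Green estimates of Theorem~\ref{Greenkernel} to verify property (ii) is more computational than the CFMS route (which would instead invoke Lemma~\ref{Lemm10} directly), but it works and is logically available at this point in the paper.

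\medskip

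\noindent\textbf{Uniqueness.} The sentence ``Harnack forces $u>0$ throughout $\Omega\setminus K$, so $\inf(u/\mathcal{K}_2)>0$'' is not justified: interior positivity of $u$ says nothing about the behaviour of the ratio $u/\mathcal{K}_2$ as $x$ approaches $\partial\Omega\cup K$ (where both numerator and denominator vanish in the $\tilde W$-sense). You also only discuss the limit $x\to\xi$ and omit the case $x\to(\partial\Omega\cup K)\setminus\{\xi\}$. The clean way to close the argument is the one actually used in \cite{caffa}: if $u\not\equiv 0$ then $u/u(x_0)$ is itself a kernel function at $\xi$, and Lemma~\ref{Lemm10} (applied to both $u$ and $\mathcal{K}_2$, at every scale $r$, together with the normalisation at $x_0$) yields a \emph{universal} two-sided bound $c^{-1}\le v/\mathcal{K}_2\le c$ for any kernel function $v$ at $\xi$. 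Hence $u\ge c^{-1}u(x_0)\,\mathcal{K}_2=c^{-1}(\alpha-1)\,\mathcal{K}_2$, which contradicts $\inf(u/\mathcal{K}_2)=\alpha-\alpha=0$ unless $\alpha=1$. This is the step you label ``boundary Harnack comparison at $\xi$'', but Lemma~\ref{Lemm10} is doing the work on the \emph{entire} boundary, not just near the pole, and it is what replaces the unjustified implication.
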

\begin{proof}
	The proof is similar to that of \cite[Theorem 3.1]{caffa} and hence we omit it.
\end{proof}

In view of the proof of Proposition \ref{uniq} (in fact from \cite[Theorem 3.1]{caffa}) and by the uniqueness, the function $K_\mu$ defined in \eqref{RN-derivative} is the unique kernel function of $-L_\mu$ and
$$ K_\mu(x,\xi)= \lim_{r \to 0}\frac{\omega^x(\Delta_r(\xi))}{\omega^{x_0}(\Delta_r(\xi))} \quad \text{for } \omega^{x_0}-\text{a.e. } \xi \in \partial \Omega \cup K.
$$

\begin{proposition} \label{continuous}
	For any $x\in\Gw \setminus K$, the function $\xi\mapsto K_{\mu}(x,\xi)$ is continuous on $\prt\xO\cup K$.
\end{proposition}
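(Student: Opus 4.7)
The approach would be compactness combined with the uniqueness result (Proposition \ref{uniq}). Given $x\in\Omega\setminus K$ and a sequence $\xi_n\to\xi$ in $\partial\Omega\cup K$, I would set $u_n(y):=K_\mu(y,\xi_n)$; each $u_n$ is positive, $L_\mu$-harmonic in $\Omega\setminus K$, and satisfies $u_n(x_0)=1$. The interior Harnack inequality together with standard elliptic regularity (valid since the coefficients of $L_\mu$ are smooth and bounded on compact subsets of $\Omega\setminus K$) gives local uniform boundedness and equicontinuity of $\{u_n\}$, so a subsequence converges locally uniformly to a positive $L_\mu$-harmonic $u^*$ with $u^*(x_0)=1$. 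The goal would then be to verify that $u^*$ satisfies conditions (i)--(iii) of Definition \ref{kernel} with pole at $\xi$; Proposition \ref{uniq} would then force $u^*=K_\mu(\cdot,\xi)$, and a standard subsequence principle would yield $K_\mu(x,\xi_n)\to K_\mu(x,\xi)$.

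Conditions (i) and (iii) are immediate, so the crux would be (ii): $u^*/\tilde W$ extends continuously to $\overline\Omega\setminus\{\xi\}$ and vanishes on $(\partial\Omega\cup K)\setminus\{\xi\}$. To show this, fix $P\in(\partial\Omega\cup K)\setminus\{\xi\}$ and choose $\rho\in(0,\beta_0/16)$ with $8\rho<|P-\xi|$ and $x_0\notin\overline{B_{4\rho}(\xi)}$. For $n$ large enough that $|\xi_n-\xi|<\rho/2$, one has $\Delta_{\rho/2}(\xi)\subset\Delta_\rho(\xi_n)\subset\Delta_{2\rho}(\xi)$. Since $u_n/\tilde W$ vanishes on $(\partial\Omega\cup K)\setminus\{\xi_n\}$, Lemma \ref{Lemm10} applied with pole $\xi_n$ and radius $\rho$ provides, with constants independent of $n$,
\begin{equation*}
u_n(y)\approx\frac{u_n(x_\rho(\xi_n))}{\tilde W(x_\rho(\xi_n))}\,\omega^y(\Delta_\rho(\xi_n)),\qquad y\in(\Omega\setminus K)\setminus\overline{B_{2\rho}(\xi_n)}.
\end{equation*}
Evaluating at $y=x_0$ (permitted by the choice of $\rho$) removes the prefactor: $u_n(x_\rho(\xi_n))/\tilde W(x_\rho(\xi_n))\approx\omega^{x_0}(\Delta_\rho(\xi_n))^{-1}$, so $u_n(y)\approx\omega^y(\Delta_\rho(\xi_n))/\omega^{x_0}(\Delta_\rho(\xi_n))$. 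The inclusions above together with the doubling property (Theorem \ref{Th10}) then give
\begin{equation*}
u_n(y)\lesssim\frac{\omega^y(\Delta_{2\rho}(\xi))}{\omega^{x_0}(\Delta_{2\rho}(\xi))},\qquad y\in(\Omega\setminus K)\setminus\overline{B_{3\rho}(\xi)},
\end{equation*}
uniformly in $n$. Passing to the limit along the subsequence would give the same bound for $u^*$.

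To conclude, since $|P-\xi|>8\rho$, a neighborhood of $P$ sits inside $\Omega\setminus\overline{B_{3\rho}(\xi)}$, and $P\notin\overline{\Delta_{2\rho}(\xi)}$. I would select $h\in C(\partial\Omega\cup K)$ with $\chi_{\Delta_{2\rho}(\xi)}\leq h\leq 1$ and $h$ vanishing in a neighborhood of $P$, and let $v_h$ be the corresponding $L_\mu$-harmonic solution from Lemma \ref{mainlemma1}. Then $\omega^y(\Delta_{2\rho}(\xi))\leq v_h(y)$ by the maximum principle (Lemma \ref{comparison}), and $v_h(y)/\tilde W(y)\to h(P)=0$ as $y\to P$ by Lemma \ref{mainlemma1}. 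Therefore $u^*(y)/\tilde W(y)\to 0$ as $y\to P$, completing the verification of (ii) and hence of $u^*=K_\mu(\cdot,\xi)$. The main obstacle would be the uniform estimate for $u_n$ near $P\neq\xi$: it hinges on the fact that the implicit constants in Lemma \ref{Lemm10} are independent of $u_n$, together with the doubling property and the comparison with a continuous supersolution to transfer the vanishing to the limit.
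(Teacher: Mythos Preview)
Your proposal is correct and follows essentially the same approach as the paper, which omits the details and refers to \cite[Corollary 3.2]{caffa}; that reference proves continuity of the kernel function precisely by the compactness-plus-uniqueness argument you outline, using the boundary Harnack comparison (here Lemma \ref{Lemm10}) together with doubling (Theorem \ref{Th10}) to obtain a uniform majorant for the family $\{K_\mu(\cdot,\xi_n)\}$ away from $\xi$, and then invoking uniqueness of the kernel function (Proposition \ref{uniq}) to identify the limit. One minor bookkeeping point: to apply Theorem \ref{Th10} at scales up to $2\rho$ you should take $x_0\notin B_{8\rho}(\xi)$ rather than $B_{4\rho}(\xi)$, but this is harmless since $\rho$ can be chosen as small as needed.
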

\begin{proof}
	The proof is similar to the one of \cite[Corollary 3.2]{caffa} and hence we omit it.
\end{proof}
We can now identify the Martin boundary and topology with their classical
analogues. We begin by recalling the definitions of the Martin boundary and
related concepts.

For $x,\;y\in \Omega \setminus K$, we set
$$\mathcal{K}_\xm(x,y):=\frac{G_{\mu}(x,y)}{G_{\mu}(x_0,y)}.$$

Consider the family
of sequences $\{y_k\}_{k\geq1}$ of points of $\xO\setminus K$ without cluster points in $\xO\setminus K$ for which
$\mathcal{K}_\xm(x,y_k)$ converges in $\xO\setminus K$ to a harmonic function, denoted by $\mathcal{K}_\xm(x,\{y_k\})$. Two
such sequences $\{y_k\}$ and $\{y_k'\}$ are called equivalent if $\mathcal{K}_\xm(x,\{y_k\})=\mathcal{K}_\xm(x,\{y_k'\})$
and each equivalence class is called an element of the Martin boundary $\xG.$ If $Y$ is
such an equivalence class (i.e., $Y\in \xG$) then $\mathcal{K}_\xm(x,Y)$ will denote the corresponding
harmonic limit function. Thus each $Y\in(\xO\setminus K)\cup\xG$ is associated with a unique
function $\mathcal{K}_\xm(x,Y).$ The Martin topology on $(\xO\setminus K)\cup\xG$ is given by the metric
$$\xr(Y,Y')=\int_{A}\frac{|\mathcal{K}_\xm(x,Y)-\mathcal{K}_\xm(x,Y')|}{1+|\mathcal{K}_\xm(x,Y)-\mathcal{K}_\xm(x,Y')|}dx\quad Y,Y'\in (\xO\setminus K)\cup \xG,$$
where $A$ is a small enough neighborhood of $x_0.$ $\mathcal{K}_\xm(x,Y)$ is a $\xr-continuous$
function of $Y\in(\xO\setminus K)\cup\xG$  for $x \in \xO\setminus K$ fixed, $(\xO\setminus K)\cup\xG$ is compact and complete with respect
to $\xr$, $(\xO\setminus K)\cup\xG$ is the $\xr$-closure of $\xO\setminus K$ and the $\xr$-topology is equivalent to the Euclidean
topology in $\xO\setminus K$. We have the following results.

\begin{proposition} \label{martincorresp}
	There is a one-to-one correspondence between the Martin boundary of $\Omega \setminus K$ and the Euclidean boundary $\partial (\Omega \setminus K) = \partial \Omega \cup K$. If $Y \in\xG$ corresponds
	to $\xi\in \partial \Omega \cup K$ then $\mathcal{K}_\xm(x,Y)=K_{\mu}(x,\xi).$ The Martin topology on $(\Omega \setminus K) \cup \Gamma$ is equivalent
	to the Euclidean topology on $(\Omega \setminus K) \cup (\partial\xO \cup K).$
\end{proposition}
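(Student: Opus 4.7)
The plan is to reduce the statement to establishing the Martin convergence \eqref{martindef}, namely that for every $\xi \in \partial\Omega\cup K$ and every Euclidean sequence $y_k\to\xi$ in $\Omega\setminus K$, the ratio $\mathcal{K}_\mu(x,y_k)=G_\mu(x,y_k)/G_\mu(x_0,y_k)$ converges to $K_\mu(x,\xi)$ locally uniformly in $x\in\Omega\setminus K$. Once this is secured, an equivalence class $Y\in\Gamma$ with representative $\{y_k\}$ yields, via compactness of $\overline{\Omega}$ and the fact that $\{y_k\}$ has no cluster point in $\Omega\setminus K$, a Euclidean subsequential limit $\xi\in\partial\Omega\cup K$; the correspondence $Y\leftrightarrow\xi$, the identity $\mathcal{K}_\mu(x,Y)=K_\mu(x,\xi)$, and the topological equivalence then follow.

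First I would fix such a sequence and set $u_k=\mathcal{K}_\mu(\cdot,y_k)$. Since each $u_k$ is $L_\mu$-harmonic on $(\Omega\setminus K)\setminus\{y_k\}$ with $u_k(x_0)=1$, on every compact $E\Subset\Omega\setminus K$ not containing $\xi$ we have $y_k\notin E$ for $k$ large, and a Harnack-chain argument provides a bound $u_k\leq C_E$ on $E$. Standard elliptic regularity extracts a subsequence converging in $C^1_{\loc}(\Omega\setminus K)$ to a nonnegative $L_\mu$-harmonic function $u$ with $u(x_0)=1$. To identify $u=K_\mu(\cdot,\xi)$ via Proposition \ref{uniq}, it remains to check the boundary vanishing (ii) of Definition \ref{kernel} at every $P\in(\partial\Omega\cup K)\setminus\{\xi\}$.

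The heart of the plan is this verification. Fix $P\neq\xi$, choose $r\in(0,\beta_0/4)$ small enough that $P\notin\overline{B_{2r}(\xi)}$, and take $k$ so large that $y_k\in B_r(\xi)$. Then $u_k$ is $L_\mu$-harmonic on $\Omega\setminus K$ outside $y_k$, and the two-sided estimates of Theorem \ref{Greenkernel} combined with the asymptotics $\tilde W\gtrsim d_K^{-\ap}$ imply that $u_k/\tilde W\to 0$ uniformly on $(\partial\Omega\cup K)\setminus\overline{B_r(\xi)}$. Applying Lemma \ref{Lemm10} to $u_k$ at $\xi$ gives
\begin{equation*}
u_k(x) \approx \frac{u_k(x_r(\xi))}{\tilde W(x_r(\xi))}\,\omega^x(\Delta_r(\xi)),\qquad x\in(\Omega\setminus K)\setminus\overline{B_{2r}(\xi)},
\end{equation*}
with implicit constants independent of $k$. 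Letting $k\to\infty$ yields the same relation for $u$. Since $\omega^x(\Delta_r(\xi))$ is itself $L_\mu$-harmonic with $\omega^x(\Delta_r(\xi))/\tilde W(x)\to 0$ as $x\to P$ (again by Lemma \ref{Lemm10} applied to the $L_\mu$-harmonic measure, using that $P\notin\overline{\Delta_r(\xi)}$), we conclude $u(x)/\tilde W(x)\to 0$ as $x\to P$. Thus $u$ satisfies all defining properties of the kernel function with pole at $\xi$, so Proposition \ref{uniq} forces $u=K_\mu(\cdot,\xi)$; since the limit is subsequence-independent, the whole sequence converges, proving \eqref{martindef}.

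The final packaging is then routine. Given $Y\in\Gamma$, a Euclidean limit point $\xi$ exists by compactness, and \eqref{martindef} gives $\mathcal{K}_\mu(\cdot,Y)=K_\mu(\cdot,\xi)$, so $Y\mapsto\xi$ is well-defined; different Euclidean limits would produce different kernel functions (since $K_\mu(\cdot,\xi)/\tilde W$ vanishes at every boundary point except $\xi$), giving injectivity, while surjectivity is furnished by Proposition \ref{uniq}. For the topology, continuity of $\xi\mapsto K_\mu(\cdot,\xi)$ from Proposition \ref{continuous} makes the correspondence a continuous bijection between the compact Euclidean boundary $\partial\Omega\cup K$ and the Hausdorff space $\Gamma$, hence a homeomorphism; combined with the evident compatibility on $\Omega\setminus K$, this gives equivalence of the Martin topology on $(\Omega\setminus K)\cup\Gamma$ with the Euclidean topology on $(\Omega\setminus K)\cup(\partial\Omega\cup K)$. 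The main technical obstacle throughout is the step showing $u/\tilde W\to 0$ at $P\neq\xi$ for the limit $u$; this is precisely where the full strength of the boundary Harnack-type bound in Lemma \ref{Lemm10} and the lower estimate in Lemma \ref{lem2.1} are used.
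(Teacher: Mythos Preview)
Your proposal is correct and reconstructs precisely the classical Hunt--Wheeden argument that the paper invokes by citation (the paper's own proof reads in full: ``The proof is similar to the one of \cite[Theorem 4.2]{hunt} and we omit it''). The structure you outline---compactness via Harnack, identification of the subsequential limit as the unique kernel function through Proposition \ref{uniq}, and the topological packaging---is exactly that route.

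One small imprecision: when you justify $\omega^x(\Delta_r(\xi))/\tilde W(x)\to 0$ as $x\to P\notin\overline{\Delta_r(\xi)}$, you appeal to ``Lemma \ref{Lemm10} applied to the $L_\mu$-harmonic measure''. That is circular, since Lemma \ref{Lemm10} takes such vanishing as a hypothesis. The clean way is to dominate $\omega^x(\Delta_r(\xi))$ by $v_h(x)$ with $h\in C(\partial\Omega\cup K)$, $0\le h\le 1$, $h\equiv 1$ on $\overline{\Delta_r(\xi)}$ and $\supp h\subset \Delta_{r'}(\xi)$ for some $r<r'<|P-\xi|$; then Lemma \ref{mainlemma1} gives $v_h/\tilde W\to 0$ at $P$. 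This is cosmetic and does not affect the validity of your argument.
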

\begin{proof}
	The proof is similar to the one of \cite[Theorem 4.2]{hunt} and we omit it.
\end{proof}

\begin{proof}[\textbf{Proof of Theorem \ref{Martin}}.] From Proposition \ref{martincorresp}, we see that \eqref{martindef} holds, it means $K_\mu$ is the Martin kernel of $-L_\mu$ in $\Omega \setminus K$. The continuity of $K_\mu$ is obtained in Proposition \ref{continuous}, while two-sided estimates \eqref{Martinest1} and \eqref{Martinest2} follow from \eqref{martindef} and estimates \eqref{Greenesta} and \eqref{Greenestb}.
\end{proof}

\smallskip

Let us now prove some $L^p$ estimates for the Martin kernel.

%Put
%\bel{pcritical} \pK:=\frac{N-\am}{N-2-\am} \quad \text{and} \quad \pO:=\frac{N+1}{N-1}.
%\ee
\begin{proposition} \label{pcritical1} Assume $\mu \leq H^2$ and $p>1$.
	
(i) If $y \in \partial \Omega$ then
\bel{ybdw} K_\mu(\cdot,y) \in L^p(\Omega;\phi_\mu) \Longleftrightarrow p<\frac{N+1}{N-1}.
\ee
Moreover, if $p<\frac{N+1}{N-1}$ then
\bel{ybdwb} \int_{\Omega} K_\mu(x,y)^p\phi_\mu(x)dx  \approx 1.
\ee
Here the similarity constants in \eqref{ybdwb} depend only on $N,\Omega,K,\mu,p$.

(ii) If $y \in K$ then
\bel{yK} K_\mu(\cdot,y) \in L^p(\Omega;\phi_\mu) \Longleftrightarrow p<\frac{N-\am}{N-2-\am}.
\ee
Moreover, if $p<\frac{N-\am}{N-2-\am}$, then  \eqref{ybdwb} holds.
\end{proposition}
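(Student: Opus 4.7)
The proposition is purely a calculation that inserts the sharp two-sided estimates of Theorem \ref{Martin} and the equivalence $\phi_\mu \approx d\, d_K^{-\am}$ from \eqref{eigenfunctionestimates} into the integral. The lower bound $\gtrsim 1$ is obtained for free in both parts from Harnack's inequality and the normalization $K_\mu(x_0,y)=1$: Lemma \ref{lemharnack} (and its consequence that quotients of $L_\mu$-harmonic functions are locally comparable) gives $K_\mu(x,y)\approx 1$ on a fixed ball $B\Subset \Omega\setminus K$ around $x_0$, uniformly in $y\in\partial\Omega\cup K$, so $\int_\Omega K_\mu(x,y)^p\phi_\mu(x)\,dx\geq \int_B K_\mu(x,y)^p\phi_\mu(x)\,dx\gtrsim 1$. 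The real task is the sharp upper bound and the identification of the critical exponent.

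For part (i), substituting the bounds for $y\in\partial\Omega$ from \eqref{Martinest1} (or \eqref{Martinest2}) yields
\[
K_\mu(x,y)^p\phi_\mu(x)\lesssim \frac{d(x)^{p+1}\, d_K(x)^{-(p+1)\am}}{|x-y|^{pN}}.
\]
I would split the domain at $|x-y|=\eta$ with $\eta<\tfrac12\dist(K,\partial\Omega)$. On $\{|x-y|\geq\eta\}$ the kernel is bounded and the remaining integral $\int_\Omega d(x)^{p+1}d_K(x)^{-(p+1)\am}dx$ is finite because $(p+1)\am\leq \tfrac{2N}{N-1}H<N-k$ (an arithmetic check using $H=(N-k-2)/2$), so this region contributes $O(1)$. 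On $\{|x-y|<\eta\}$ one has $d_K(x)\approx 1$ and, by $C^2$-regularity of $\partial\Omega$, $d(x)\lesssim|x-y|$, which reduces matters to $\int_0^\eta r^{p+N-pN}dr$; this converges iff $p<(N+1)/(N-1)$. For necessity and the matching lower bound, the interior $C^2$-cone at $y$ provides $d(x)\gtrsim|x-y|$ on a region of positive solid angle uniform in $y$, giving the same radial integral from below.

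For part (ii), $y\in K\subset\Omega$ has $d(y)\gtrsim 1$, so $d(x)\approx 1$ on a fixed neighbourhood of $y$. The estimate from \eqref{Martinest1} gives
\[
K_\mu(x,y)^p\phi_\mu(x)\approx \frac{d_K(x)^{-(p+1)\am}}{|x-y|^{p(N-2-2\am)}}
\]
there, with a harmless $|\ln|x||^p$ factor in the critical sub-case of \eqref{Martinest2}. Using the local tubular coordinates near $K$ from Section 2 (so that $d_K(x)\approx|x''|$ with $x''$ the $(N-k)$ normal coordinates), and polar coordinates $|x-y|=r$, $|x''|=r\sin\theta$, the integral near $y$ factorises as
\[
\int_0^\delta r^{\,(N-1-\am)-p(N-2-\am)}dr\;\cdot\;\int_0^{\pi/2}(\sin\theta)^{N-k-1-(p+1)\am}(\cos\theta)^{k-1}d\theta.
\]
The radial integral converges precisely when $p<(N-\am)/(N-2-\am)$, matching the stated threshold. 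A lower bound of the same order is obtained by restricting to $\{\theta\in(\theta_0,\pi/2-\theta_0)\}$ with a fixed aperture, which is uniform in $y\in K$ thanks to $C^2$-regularity of $K$.

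The main obstacle is to check that the angular condition $(p+1)\am<N-k$ does not become the binding constraint, i.e.\ that it holds throughout $p<(N-\am)/(N-2-\am)$. Using $\am+\ap=2H$, $\am\ap=\mu$ and $N-k=\ap+\am+2$, at the radial threshold the angular condition becomes
\[
2\sqrt{H^2-\mu}\,(\ap+k+2)+2k\geq 0,
\]
which is trivial; hence it holds with room to spare for all admissible $p$. In the critical sub-case $k=0,\mu=((N-2)/2)^2$ the logarithmic factor contributes at most $r^{-\varepsilon}$, which neither changes the threshold nor destroys the lower bound, so the same argument delivers both bounds and the uniform equivalence $\int_\Omega K_\mu(\cdot,y)^p\phi_\mu\approx 1$.
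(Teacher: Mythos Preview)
Your proposal is correct. The overall strategy---insert the sharp two-sided kernel estimates and the eigenfunction equivalence, then compute---is the same as the paper's, but the execution differs in two places worth noting.

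For the upper bound in part (ii), the paper does not carry out the tubular/polar computation by hand: it invokes Lemma \ref{anisotita} directly on the integrand $d_K(x)^{-(p+1)\am}|x-y|^{-p(N-2-2\am)}$, and then splits into the three cases $0<\mu<\big(\tfrac{N-2}{2}\big)^2$, $\mu\le 0$, and the critical case. Your explicit change to $(r,\theta)$-coordinates is essentially the content of that lemma unpacked, with the advantage that it treats all signs of $\am$ at once. Your verification that the angular constraint $(p+1)\am<N-k$ is implied by $p<\tfrac{N-\am}{N-2-\am}$ is something the paper leaves implicit (it is a hypothesis of Lemma \ref{anisotita} but is not checked separately there), so on that point you are actually more careful. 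One small presentational caveat: your angular factor $(\cos\theta)^{k-1}$ is formal when $k=0$; in that case there is no tangential direction and the integral reduces to a pure radial one, which you should say.

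For the lower bound $\gtrsim 1$, your argument via interior Harnack and the normalization $K_\mu(x_0,y)=1$ is different from the paper's and cleaner: the paper instead bounds the integrand from below on an explicit region (a tube $K_{\beta_2}$ for $y\in K$, using $d_K(x)\le|x-y|$) and evaluates. Your route gives the uniformity in $y$ for free, whereas the paper obtains it by checking that the explicit lower bound depends on $y$ only through geometric constants of $K$ and $\partial\Omega$.
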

\begin{proof}
We prove only (ii) since (i) can be obtained by a similar argument. Assume $y \in K$.

\noindent \textbf{Case 1: $0<\mu < \left( \frac{N-2}{2} \right)^2$.}  From \eqref{eigenfunctionestimates} and \eqref{Martinest1}, we obtain
\bel{yK1}	
\phi_\mu(x) K_\mu(x,y)^p \approx d(x)^{p+1}d_K(x)^{-\am(p+1)}|x-y|^{-p(N-2-2\am)}.
\ee

If $p<\frac{N-\am}{N-2-\am}$ then by Lemma \ref{anisotita}, we obtain
\bel{yK2} \int_{\Omega}\phi_\mu(x) K_\mu(x,y)^p dx \lesssim \int_{\Omega}d(x)^{p+1}d_K(x)^{-\am(p+1)}|x-y|^{-p(N-2-2\am)}dx \lesssim 1.
\ee
Therefore
\bel{impli1-K}
p<\frac{N-\am}{N-2-\am} \Longrightarrow K_\mu(\cdot,y) \in L^p(\Omega;\phi_\mu).
\ee

On the other hand, for $x \in K_{\beta_0}$, we have $d(x) \gtrsim 1$ and $d_K(x) \leq |x-y|$. Therefore,
\begin{equation} \label{yK2'} \begin{aligned}
\int_{\Omega}\phi_\mu(x) K_\mu(x,y)^p dx &\gtrsim \int_{K_{\beta_2}}  |x-y|^{- \am(p+1) - p(N-2-2\am)}dx.
\end{aligned} \end{equation}
This implies
\bel{yK3}  K_\mu(\cdot,y) \in L^p(\Omega;\phi_\mu) \Longrightarrow p<\frac{N-\am}{N-2-\am}. \ee
Combining \eqref{yK2} and \eqref{yK3} yields \eqref{yK}. Moreover, we derive \eqref{ybdwb} from \eqref{yK2} and \eqref{yK2'}.

\noindent \textbf{Case 2: $\mu \leq 0$.} If $p<\frac{N-\am}{N-2-\am}$, since $d_K(x)^{-\am(p+1)} \leq |x-y|^{-\am(p+1)}$, it follows that
\bel{yK4} \int_{\Omega}\phi_\mu(x) K_\mu(x,y)^p dx \lesssim \int_{\Omega}d(x)^{p+1}d_K(x)^{-\am(p+1)}|x-y|^{-p(N-2-2\am)}dx \lesssim 1.
\ee
Consequently, \eqref{impli1-K} holds.

On the other hand, for $x \in V(y,\frac{\beta_1}{2})$, estimate \eqref{propdist} holds. This leads to
\begin{equation} \label{yK4'} \begin{aligned}
\int_{\Omega}\phi_\mu(x) K_\mu(x,y)^p dx &\gtrsim \int_{V(y,\frac{\beta_1}{2})} d_K(x)^{- \am(p+1) - p(N-2-2\am)}dx \\
& \gtrsim \int_{B^{N-k}(0,\frac{\beta_1}{8})}|z''|^{- \am(p+1) - p(N-2-2\am)} dz''.
\end{aligned} \end{equation}
Therefore \eqref{yK3} holds and we obtain \eqref{yK}. Moreover, \eqref{ybdwb} follows from \eqref{yK4} and \eqref{yK4'}.

\noindent \textbf{Case 3: $k=0$, $K=\{0\}$ and $\mu=\left( \frac{N-2}{2} \right)^2$.} By proceeding as above, using \eqref{eigenfunctionestimates} and \eqref{Martinest2}, we derive \eqref{yK}and \eqref{ybdwb}.
\end{proof}

\begin{corollary} \label{cor:pcritical} Assume $\mu \leq H^2$.

(i) If $1 < p <\frac{N+1}{N-1}$ then for any $\nu \in \GTM(\partial \Omega \cup K)$ with compact support on $\partial \Omega$, there holds
\bel{Kpnu1}
\|  \BBK_\mu[\nu] \|_{L^p(\Omega;\ei)} \lesssim \| \nu \|_{\GTM(\partial \Omega)}.
\ee

(ii) If $1< p <\frac{N-\am}{N-2-\am}$ then for any $\nu \in \GTM(\partial \Omega \cup K)$ with compact support on $K$, there holds
\bel{Kpnu2}
\|  \BBK_\mu[\nu] \|_{L^p(\Omega;\ei)} \lesssim \| \nu \|_{\GTM(K)}.
\ee

(iii) If $1< p <\min\left\{ \frac{N+1}{N-1},\frac{N-\am}{N-2-\am} \right\}$ then for any $\nu \in \GTM(\partial \Omega \cup K)$, there holds
\bel{Kpnu3}
\|  \BBK_\mu[\nu] \|_{L^p(\Omega;\ei)} \lesssim \| \nu \|_{\GTM(\partial \Omega \cup K)}.
\ee
The implicit constants depend on $N,\Omega,K,\mu,p$.
\end{corollary}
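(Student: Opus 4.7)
The plan is to reduce each bound to the pointwise $L^p$ estimate for the Martin kernel already obtained in Proposition \ref{pcritical1}, via Minkowski's integral inequality applied with respect to the measure $d|\nu|$ on $\partial\Omega\cup K$ and the weighted measure $\phi_\mu(x)\,dx$ on $\Omega$.

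First, for (i) and (ii), I would write the Martin potential as
\[
\mathbb{K}_\mu[\nu](x)=\int_{\partial\Omega\cup K}K_\mu(x,y)\,d\nu(y),
\]
and estimate $|\mathbb{K}_\mu[\nu](x)|\leq \int K_\mu(x,y)\,d|\nu|(y)$ using that $K_\mu\geq 0$. Minkowski's integral inequality with exponent $p>1$, applied with respect to the product measure $d|\nu|(y)\otimes\phi_\mu(x)\,dx$, then yields
\[
\|\mathbb{K}_\mu[\nu]\|_{L^p(\Omega;\phi_\mu)}\leq \int_{\partial\Omega\cup K}\|K_\mu(\cdot,y)\|_{L^p(\Omega;\phi_\mu)}\,d|\nu|(y).
\]
Under the range constraint $1<p<\tfrac{N+1}{N-1}$ (for $\nu$ supported on $\partial\Omega$) or $1<p<\tfrac{N-\am}{N-2-\am}$ (for $\nu$ supported on $K$), Proposition \ref{pcritical1} gives $\|K_\mu(\cdot,y)\|_{L^p(\Omega;\phi_\mu)}^p\approx 1$ with an implicit constant depending only on $N,\Omega,K,\mu,p$ (and in particular uniform in $y$). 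Plugging this uniform bound into the Minkowski estimate immediately delivers \eqref{Kpnu1} and \eqref{Kpnu2}.

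For (iii), the plan is a straightforward decomposition: split $\nu=\nu\lfloor_{\partial\Omega}+\nu\lfloor_K$, so that $\mathbb{K}_\mu[\nu]=\mathbb{K}_\mu[\nu\lfloor_{\partial\Omega}]+\mathbb{K}_\mu[\nu\lfloor_K]$. Applying parts (i) and (ii) separately under the common range $1<p<\min\{\tfrac{N+1}{N-1},\tfrac{N-\am}{N-2-\am}\}$, and using the triangle inequality in $L^p(\Omega;\phi_\mu)$ together with $\|\nu\lfloor_{\partial\Omega}\|+\|\nu\lfloor_K\|=\|\nu\|_{\mathfrak{M}(\partial\Omega\cup K)}$, one obtains \eqref{Kpnu3}.

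No real obstacle is expected here: the critical ingredient is the uniformity in $y$ of the constant in Proposition \ref{pcritical1}\,(which is visible from its proof, since the bounds come only from Lemma \ref{anisotita} and the two-sided estimates on $K_\mu$, with constants depending solely on the geometric data $N,\Omega,K,\mu,p$), and granted this uniformity the argument is a one-line application of Minkowski. The only point requiring a moment's care is verifying the compact-support hypotheses in (i) and (ii); these are built into the statement so that the estimate $\|K_\mu(\cdot,y)\|_{L^p(\Omega;\phi_\mu)}\lesssim 1$ from Proposition \ref{pcritical1} applies at every $y$ in the support of $\nu$ without boundary interaction between $\partial\Omega$ and $K$.
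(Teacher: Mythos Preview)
Your proposal is correct and essentially matches the paper's own proof, which simply says ``By using Proposition \ref{pcritical1} and Jensen's inequality, we obtain easily (i)--(iii).'' Your use of Minkowski's integral inequality is equivalent to the paper's Jensen-inequality formulation (normalize $|\nu|$ to a probability measure, apply convexity of $t\mapsto t^p$, then Fubini); the decomposition $\nu=\nu\lfloor_{\partial\Omega}+\nu\lfloor_K$ for part (iii) is exactly what is implicit in the paper's one-line argument.
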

\begin{proof}
By using Proposition \ref{pcritical1} and Jensen's inequality, we obtain easily (i)--(iii).
\end{proof}

\subsection{Representation theorem}

Let us give a lemma that we will use to prove the representation formula.
\begin{lemma} \label{Vsuperhar}
	Let $F \subset\partial\xO\cup K$ and $D$ be an open smooth neighborhood of $F$. Assume that if $F\subset K$ then  $D\Subset \xO$ and if $F\subset \partial\xO$ then $\xO\cap D\subset (\xO \setminus K_\xb)$ for some $\xb>0.$ Let $u$ be a positive $L_\xm$-harmonic function in $\xO\setminus K.$ Then there exists  $L_\mu$-superharmonic function $V$ such that
	$$
	V(x)=\left\{\BA {lll}v(x)\qquad&\text{in }\;(\xO\setminus K)\setminus D\\[2mm]
	u(x)\qquad&\text{in }\;(\xO\setminus K)\cap \overline{D},
	\EA\right.
	$$
	where $v$ satisfies
	\bel{tyx} \left\{ \BAL L_\mu v&=0\qquad &&\text{in }(\xO\setminus K)\setminus \overline{D}\\
	\lim_{x \in \Omega \setminus K,\; x \to y}v(x)&=u(y) &&\forall y\in \partial D\cap(\xO\setminus K)\\
	\lim_{x \in \Omega \setminus K,\; x\to y}\frac{v(x)}{\tilde W(x)}&=0 &&\forall y\in(\partial\Omega \cup K) \setminus \overline{D}.
	\EAL \right. \ee
\end{lemma}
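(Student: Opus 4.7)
The plan is to proceed in two stages: first, construct $v$ solving the boundary value problem \eqref{tyx} in the domain $\Omega^{*}:=(\Omega\setminus K)\setminus \bar D$; second, verify that the glued function $V$ is $L_\mu$-superharmonic on $\Omega \setminus K$ by computing the distributional action of $-L_\mu$ on $V$ and showing that the jump across $\partial D\cap(\Omega\setminus K)$ has the correct sign.

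For the first stage, a natural approach is a Perron-type construction: define $v$ as the pointwise supremum of $L_\mu$-subharmonic functions $w$ in $\Omega^{*}$ satisfying $\limsup_{x\to y} w(x)\le u(y)$ for every $y\in\partial D\cap(\Omega\setminus K)$ and $\limsup_{x\to y} w(x)/\tilde W(x)\le 0$ for every $y\in(\partial\Omega\cup K)\setminus\bar D$. Since $w\equiv 0$ is admissible and $u$ itself is admissible, one has $0\le v\le u$ in $\Omega^{*}$. Standard Perron arguments give that $v$ is $L_\mu$-harmonic in $\Omega^{*}$. Continuity at $\partial D\cap(\Omega\setminus K)$ is obtained from explicit local barriers on the smooth surface $\partial D$, while the $\tilde W$-boundary condition on $(\partial\Omega\cup K)\setminus\bar D$ follows by dominating $v$ by an $L_\mu$-superharmonic upper barrier $\bar V$ with $\bar V/\tilde W\to 0$ at that part of the boundary. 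Such a barrier can be produced by first extending $u|_{\partial D}$ to a continuous function on $\partial\Omega\cup K$ that vanishes outside a neighborhood of $\partial D\cap(\partial\Omega\cup K)$, solving the corresponding global BVP on $\Omega\setminus K$ via Lemma~\ref{mainlemma1}, and refining with the local supersolutions of Lemma~\ref{subsup} and Lemma~\ref{existence2} near the singular set when necessary.

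For the second stage, set $V$ as in the statement. Continuity of $V$ in $\Omega\setminus K$ is immediate from $u=v$ on $\partial D\cap(\Omega\setminus K)$, and $V\in H^{1}_{loc}(\Omega\setminus K)$ follows because the traces of $u$ and $v$ from the two sides agree on the smooth surface $\partial D$. Elliptic regularity gives that both $u$ and $v$ are $C^{1}$ up to $\partial D\cap(\Omega\setminus K)$, hence for $x_0\in\partial D\cap(\Omega\setminus K)$ with outward unit normal $n$ to $D$, the nonnegative function $u-v$ on $\Omega^{*}$ vanishes at $x_0$, giving $\partial_n(u-v)(x_0)\ge 0$ from the $\Omega^{*}$ side, i.e., $\partial_n v(x_0)\le\partial_n u(x_0)$. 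For any $\psi\in C_c^\infty(\Omega\setminus K)$ with $\psi\ge 0$, integration by parts on $D\cap(\Omega\setminus K)$ and on $\Omega^{*}$ separately, using $L_\mu u=0$ in $\Omega\setminus K$, $L_\mu v=0$ in $\Omega^{*}$, and $u=v$ on $\partial D$, yields
\begin{equation*}
\int_{\Omega\setminus K}\nabla V\cdot\nabla\psi\,dx-\mu\int_{\Omega\setminus K}\frac{V\psi}{d_K^{2}}\,dx=\int_{\partial D\cap(\Omega\setminus K)}\psi(\partial_n u-\partial_n v)\,dS\ge 0.
\end{equation*}
By density this extends to all $\psi\in H_c^{1}(\Omega\setminus K)$ with $\psi\ge 0$, which establishes the $L_\mu$-superharmonicity of $V$.

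The main obstacle is the first stage, specifically verifying the $\tilde W$-trace condition for $v$ on $(\partial\Omega\cup K)\setminus\bar D$, whose delicacy depends on the geometric configuration. In the case $F\subset K$ with $\bar D\Subset\Omega$, the part of the boundary to be controlled is $(\partial\Omega)\cup(K\setminus\bar D)$, requiring a barrier that decays like $\tilde W$ both near $\partial\Omega$ and near the remaining singular set. In the case $F\subset\partial\Omega$ with $D$ separated from $K$ by $K_\beta$, one must instead control $K\cup((\partial\Omega)\setminus\bar D)$, and the delicate region is all of $K$. In both cases the construction of $\bar V$ combines the global existence of Lemma~\ref{mainlemma1} with the local constructions of Lemmas~\ref{subsup} and \ref{existence2}. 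Once $v$ is in hand, the gluing step and the distributional computation verifying superharmonicity are standard consequences of elliptic integration by parts and of the key normal-derivative inequality $\partial_n v\le\partial_n u$ that follows from $v\le u$ in $\Omega^{*}$ with equality on $\partial D$.
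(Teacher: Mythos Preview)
Your two-stage plan is sound, and your second stage (the gluing argument with the normal-derivative inequality $\partial_n v\le\partial_n u$ on $\partial D\cap(\Omega\setminus K)$) is correct and in fact more explicit than the paper's terse ``set $V=u\wedge v$.'' The difficulty is entirely in the first stage, and there your barrier construction has a real gap.

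The problematic sentence is the one producing the upper barrier $\bar V$: you propose to ``extend $u|_{\partial D}$ to a continuous function on $\partial\Omega\cup K$ \ldots\ and solve the corresponding global BVP via Lemma~\ref{mainlemma1}.'' This does not parse: $u|_{\partial D}$ lives on $\partial D$, not on $\partial\Omega\cup K$, and $u$ need not extend continuously to $\partial\Omega\cup K$ at all (a generic positive $L_\mu$-harmonic function behaves like $d_K^{-\ap}$ near parts of $K$). In particular, near $\partial D\cap K$ the boundary datum $u$ on $\partial D$ is unbounded, so no single multiple of $\phi_\mu$ dominates it on $\partial D$, and Lemma~\ref{mainlemma1} cannot be invoked with any continuous $h$ matching $u$. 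Likewise, your claim that ``$u$ itself is admissible'' in the Perron lower class is false in general: a positive $L_\mu$-harmonic $u$ need not satisfy $\limsup u/\tilde W\le 0$ on $(\partial\Omega\cup K)\setminus\bar D$ (think of $u=\BBK_\mu[\nu]$ with $\nu$ supported there). The inequality $v\le u$ can still be recovered by comparison, but it does not give the $\tilde W$-trace you need.

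The paper circumvents this by a double approximation rather than a direct barrier. One truncates the datum on $\partial D$ to $\eta_n u$ supported on $\{d_K>r_n\}\cap\partial D$ (hence bounded), solves on the truncated domain $(\Omega\setminus K_{r_m/2})\setminus\bar D$ to obtain $v_{n,m}$, and observes $0\le v_{n,m}\le\min\{u,c_n\phi_\mu\}$ for an $n$-dependent constant $c_n$. Passing $m\to\infty$ gives $v_n$ with the same bound. The key step you are missing is then to apply the boundary Harnack inequality \eqref{harnack} to each $v_n$ near any $\xi\in K\setminus\bar D$: since $v_n\le c_n\phi_\mu$ ensures the zero-trace hypothesis of Lemma~\ref{lemharnack}, one gets $v_n(x)/\phi_\mu(x)\lesssim v_n(y)/\phi_\mu(y)\le u(y)/\phi_\mu(y)$ for $x,y$ near $\xi$, and fixing one such $y$ produces a bound \emph{uniform in $n$}. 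Passing $n\to\infty$ then yields $v\le C_\xi\phi_\mu$ locally near each $\xi$, hence $v/\tilde W\to 0$ there. Without this Harnack step, the $c_n$ blow up and your Perron supremum need not inherit the $\tilde W$-trace from its approximants.
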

\begin{proof}
	Let $F\subset K$. Note that  $u \in C^2(\Omega \setminus K)$ since it is $L_\mu$-harmonic. We assume that $\{r_n\}_{n=0}^\infty$ is a decreasing sequence such that $r_n\searrow0$ and $r_1<\frac{\xb_0}{16}.$ Set
	$$D_{r_n}:=\{\xi \in  \partial D\cap\Omega : d(\xi)>2r_n\}.$$
	
	Let $0\leq\eta_n\leq1$ be a smooth function such that $\eta_n=1$ in  $\overline{D}_{r_n}$ with compact support in $D_{\frac{r_n}{2}}$. In view of the proof of Lemmata \ref{mainlemma1} and \ref{existence1}, for $m>n$, there is a unique solution $v_{n,m}$ of
	\be \left\{ \BAL L_\mu v &=0\qquad &&\text{in }(\Omega \setminus K_{\frac{r_m}{2}})\setminus \overline{D}\\
	\lim_{x \to y}v(x)&=\eta_n(y)u(y)&&\forall y\in \partial D\cap(\Omega \setminus K_{\frac{r_m}{2}})\\
	\lim_{x\to y}v(x)&=0 &&\forall y\in(\partial\xO\cup \partial K_{\frac{r_m}{2}})\setminus \overline{D}.
	\EAL \right. \ee
	
	Furthermore, by the comparison principle, we have $0\leq v_{n,m} \leq u$ for  $m>n$ and hence $v_{n,m} \leq v_{n,m+1}$. 	In addition, there exists a constant $c_n=c_n(||u||_{L^\infty(D_\frac{r_n}{2})}, \inf_{x\in D_\frac{r_n}{2}}\xf_\xm)$ such that
	$0\leq v_{n,m}(x)\leq\min\{u(x),c_n\xf_\xm(x)\}$ for all $x\in \xO\setminus (D\cup K_{r_m})$ and $n,m\in \mathbb{N}$.
	Thus $v_{n,m}\to v_n$ locally uniformly in $\xO\setminus (\overline{D}\cup K)$ as $m\to \infty$ and hence
	\bel{fragma12}
	0\leq v_{n}(x)\leq\min\{u(x),c_n\xf_\xm(x)\} \quad  \forall x\in \xO\setminus (\overline{D}\cup K)\;\;\text{and}\;\;\forall n\in \mathbb{N}.
	\ee
	For all $\xi\in K\setminus \overline{D},$ by \eqref{fragma12} and \eqref{harnack} there exists $r_0<\frac{\dist(\xi,\partial D)}{4}$ such that
	$$\frac{v_n(x)}{\phi_\mu(x)}\lesssim \frac{v_n(y)}{\phi_\mu(y)}\lesssim \frac{u(y)}{\phi_\mu(y)}, \quad\forall x,y \in B_{\frac{r_0}{4}}(\xi) \cap (\Omega \setminus K).$$
	Thus $v_n\to v$ and the desired result follows if we set $V=u \land v$.
	
	If $\xi\in \partial\xO$ the proof is similar and simpler and thus we omit it.
\end{proof} \medskip

We recall that $x_0 \in \Omega \setminus K$  is a fixed reference point. Let $\{\xO_n\}$ be an increasing sequence of bounded open smooth domains  such that
\bel{Omegan}  \overline{\xO_n}\subset \xO_{n+1}, \quad \cup_n\xO_n=\xO, \quad \mathcal{H}^{N-1}(\partial \Omega_n)\to \mathcal{H}^{N-1}(\partial \Omega).
\ee
  Let $\{K_n\}$ be a decreasing sequence of bounded open smooth domains  such that
\bel{Kn} K\subset K_{n+1}\subset\overline{K_{n+1}}\subset K_{n}\subset\overline{K_{n}} \subset\Omega_n, \quad \cap_n K_n=K.
\ee
Set $O_n=\xO_n\setminus K_n$ for each $n$ and assume that $x_0 \in O_1$. Such a sequence $\{O_n\}$ will be called a {\it smooth exhaustion} of $\Gw\setminus K$.

Then $-L_\mu$ is uniformly elliptic and coercive in $H^1_0(O_n)$ and its first eigenvalue $\lambda_\mu^{O_n}$ in $O_n$ is larger than its first eigenvalue $\lambda_\mu$ in $\Omega \setminus K$.

For $h\in C(\prt O_n)$, the following problem
\be\label{sub} \left\{ \BAL
-L_{\xm } v&=0\qquad&&\text{in } O_n\\
v&=h\qquad&&\text{on } \prt O_n,
\EAL \right.
\ee
admits a unique solution which allows to define the $L_{\xm }$-harmonic measure $\omega_{O_n}^{x_0}$ on $\prt O_n$
by
\be\label{redu2}
v(x_0)=\myint{\prt O_n}{}h(y)d\gw^{x_0}_{O_n}(y).
\ee

\begin{proposition}\label{traceW}
	For every $\phi \in C(\overline{\xO})$, there holds
	\be\label{2.27}
	\lim_{n\rightarrow\infty}\int_{\partial O_n} \phi(x)\tilde W(x)d\gw^{x_0}_{O_n}(x)=\int_{\partial \xO\cup K}\phi(x)d\gw^{x_0}(x).
	\ee
\end{proposition}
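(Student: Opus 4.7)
The plan is to exhibit both sides of \eqref{2.27} as values of explicit $L_\mu$-harmonic functions at the reference point $x_0$, and then to control their difference by a weighted maximum principle that exploits the uniform boundary behavior provided by Lemma \ref{mainlemma1}. Concretely, let $V$ be the $L_\mu$-harmonic function in $\Omega\setminus K$ furnished by Lemma \ref{mainlemma1} with boundary data $h := \phi|_{\partial\Omega\cup K}$; by the definition of $\omega^{x_0}$ we have $V(x_0) = \int_{\partial\Omega\cup K}\phi\,d\omega^{x_0}$. Let $v_n$ be the unique classical solution of $L_\mu v_n = 0$ in $O_n$ with $v_n = \phi\tilde W$ on $\partial O_n$; standard elliptic theory applies because $d_K$ is bounded below on $\overline{O_n}$ and $\lambda_\mu^{O_n}\geq\lambda_\mu>0$. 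By \eqref{redu2}, $v_n(x_0)=\int_{\partial O_n}\phi\tilde W\,d\omega^{x_0}_{O_n}$, so \eqref{2.27} is equivalent to $v_n(x_0)\to V(x_0)$.

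For the comparison I would use as a reference the $L_\mu$-harmonic function $u_1$ produced by Lemma \ref{mainlemma1} with data $h\equiv 1$; Lemma \ref{comparison} together with $u_1/\tilde W\to 1$ on $\partial\Omega\cup K$ guarantees that $u_1>0$ in $\Omega\setminus K$ and that $u_1/\tilde W$ extends continuously and strictly positively to $\overline\Omega$. Since $V$ is $L_\mu$-harmonic on all of $\Omega\setminus K\supset O_n$, the difference $w_n:=v_n-V$ satisfies $L_\mu w_n=0$ in $O_n$ with $w_n=\phi\tilde W - V$ on $\partial O_n$. A direct computation using $L_\mu u_1 = 0$ yields
\begin{equation*}
-\mathrm{div}\bigl(u_1^2\nabla(w_n/u_1)\bigr)=0\qquad\text{in }O_n,
\end{equation*}
which is a pure divergence-form equation with no zero-order term and hence obeys the classical maximum principle on the smooth bounded domain $O_n$. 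This gives the key estimate
\begin{equation*}
\frac{|v_n(x_0)-V(x_0)|}{u_1(x_0)}\;\leq\;\max_{y\in\partial O_n}\left|\phi(y)-\frac{V(y)}{\tilde W(y)}\right|\cdot\frac{\tilde W(y)}{u_1(y)}.
\end{equation*}

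To conclude, I would show that the right-hand side tends to $0$. Fix $\varepsilon>0$. Using the uniform boundary behavior in Lemma \ref{mainlemma1} for both $V$ (data $\phi$) and $u_1$ (data $1$), together with the uniform continuity of $\phi$ on $\overline\Omega$, one finds a neighborhood $\mathcal U$ of $\partial\Omega\cup K$ in $\overline\Omega$ on which $|\phi-V/\tilde W|<\varepsilon$ and $\tilde W/u_1\leq 2$. From \eqref{Omegan}--\eqref{Kn}, i.e.\ $\Omega_n\nearrow\Omega$ and $K_n\searrow K$, the distance from $\partial O_n$ to $\partial\Omega\cup K$ tends to $0$, so $\partial O_n\subset\mathcal U$ for all $n$ sufficiently large. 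Hence the max above is at most $2\varepsilon$; letting $n\to\infty$ and then $\varepsilon\to 0$ yields $v_n(x_0)\to V(x_0)$, which is \eqref{2.27}.

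The main difficulty I anticipate is ensuring that the boundary limits $V/\tilde W\to\phi$ and $u_1/\tilde W\to 1$ are indeed uniform along the entire shrinking surface $\partial O_n$; this is exactly the reason Lemma \ref{mainlemma1} is stated with convergence uniform in the boundary point rather than pointwise, and it is what forces the appearance of the weight $\tilde W/u_1$ in the estimate above. The weighted change of variable $w_n/u_1$ is the standard device that converts the Hardy-type operator $L_\mu$, whose singular zero-order term would otherwise obstruct a straightforward maximum principle, into a divergence-form operator for which the classical argument applies on each fixed $O_n$.
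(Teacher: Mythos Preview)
Your argument is correct and is more direct than the paper's. The paper proceeds by a compactness--identification scheme: it first shows that the measures $\tilde W\,\omega^{x_0}_{O_n}$ have uniformly bounded total mass (by applying Lemma \ref{mainlemma1} with $h\equiv 1$), extracts a weak limit $\tilde\omega$ supported on $\partial\Omega\cup K$, and then identifies $\tilde\omega=\omega^{x_0}$ by testing against the continuous extensions $z/\tilde W$ of the solutions $z=\BBK_\mu[\zeta\,\omega^{x_0}]$ and evaluating $z(x_0)$ in two ways. Your approach bypasses compactness entirely: you realize both sides of \eqref{2.27} as values at $x_0$ of $L_\mu$-harmonic functions on $O_n$ and bound their difference by the $h$-transform maximum principle with base $u_1$, reducing everything to the uniform smallness of $\bigl(\phi - V/\tilde W\bigr)\cdot \tilde W/u_1$ on $\partial O_n$. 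Both arguments hinge on the same ingredient, namely the uniform boundary behavior in Lemma \ref{mainlemma1}; the paper's route is the standard measure-theoretic template (useful when one only has subsequential control), while yours gives an explicit quantitative bound on $|v_n(x_0)-V(x_0)|$ and avoids any subsequence extraction.
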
  %%%%%%%%%%%%%%%%%%%%%%%%%%%%%%%%%%%%%%%%%%%%%%%%%%%%%%%%%%%%%%PROOF%%%%%%%%%%%%%%%%%%%%%%%%%%%%%%%%%%%%%%%%%%%%%%%%%%%%%%%%%%%%%%%%%%%%%%%%%%%%%%%%%%%%%%%%%%%%%%%%%
\begin{proof}
	Let $n_0\in \mathbb{N}$ be such that
	$\hbox{dist}(\partial K_n,K)<\frac{\xb_0}{16}$ for all $n \geq n_0$.
	For $n\geq n_0$, let $w_n$ be the solution of
	\be\label{sub-n} \left\{  \BAL
	-L_{\mu }w_n&=0 \qquad&&\text{in } O_n\\
	w_n&=\tilde W\qquad&&\text{on } \prt O_n.
	\EAL \right.
	\ee
	In view of the proof of Lemma \ref{mainlemma1}, there exists a positive constant $c=c(\Omega,K,\mu)$ such that
	$$\left \|\frac{w_n}{\tilde W} \right \|_{L^\infty(O_{n_0})}\leq c \quad \forall n\geq n_0.$$
	Furthermore
	\be
	w_n(x_0)=\int_{\partial O_n}\tilde W(x)d\gw^{x_0}_{O_n}(x)<c.\label{wn}
	\ee
	We extend $\gw^{x_0}_{O_n}$ as a Borel measure on $\overline{\xO}$ by setting $\gw^{x_0}_{O_n}(\overline{\xO}\setminus O_n) = 0,$ and keep
	the notation $\gw^{x_0}_{O_n}$ for the extension. By \eqref{wn}, the sequence $\{\tilde W\gw^{x_0}_{O_n}\}$ is bounded in the space $\mathfrak M_b(\overline\Gw)$ of bounded Borel measures in $\overline\Gw$. Thus there exists
	a subsequence, still denoted by $\{\tilde W\gw^{x_0}_{\Gw_n}\}$, which converges narrowly to some
	positive measure, say $\widetilde{\gw}$ which is clearly supported on $\partial\xO\cup K$ and satisfies
	$\|\widetilde{\gw}\|_{\mathfrak M_b(\partial \Omega \cup K)}\leq c$ due to \eqref{wn}. For any
	$\phi \in C(\overline{\xO})$ there holds
	$$\lim_{n\rightarrow\infty}\int_{\partial O_n}\phi(x)\tilde W(x)d \gw^{x_0}_{O_n}(x)=\int_{\partial\xO\cup K}\phi(x) d \widetilde{\gw}(x).$$
	Set $\xz:=\phi\lfloor_{\prt\Gw\cup K}$ and $z(x):=\int_{\partial\xO\cup K}K_{\mu}(x,y)\xz(y)d \gw^{x_0}(y)$.
	Then
	$$\lim_{\Omega \setminus K \ni x \to y}\frac{z(x)}{\tilde W(x)}=\gz(y) \quad \forall y \in \partial \Omega \cup K  \quad \text{ and }\quad z(x_0)=\int_{\partial\xO\cup K}\gz(y) d \gw^{x_0}(y).
	$$
	
	By Lemma \ref{mainlemma1}, $\frac{z}{\tilde W}\in C(\overline{\xO})$. Since $\frac{z}{\tilde W}\lfloor_{\prt O_n} \to \zeta$ uniformly as $n\to\infty$, there holds
	$$z(x_0)=\int_{\partial O_n}z\lfloor_{\prt O_n}d \gw^{x_0}_{O_n}=\int_{\partial O_n}\tilde W\frac{z\lfloor_{\prt O_n}}{\tilde W}d \gw^{x_0}_{O_n}\to \int_{\partial\xO\cup K}\gz d \tilde\gw\;\text{ as }\;n\to\infty.$$
	It follows that
	$$\int_{\partial\xO\cup K}\xz d \widetilde{\gw}=\int_{\partial\xO\cup K}\xz d \gw^{x_0}\qquad\forall\xz\in C(\partial\xO \cup K). $$
	Consequently $d\widetilde{\gw}=d \gw^{x_0}.$ Because the limit does not depend on the
	subsequence it follows that the whole sequence $\{\tilde Wd\gw^{x_0}_{O_n}\}$ converges weakly to $\omega^{x_0}$. This implies \eqref{2.27}.
\end{proof} \smallskip

\begin{proof}[\textbf{Proof of Theorem \ref{th:Rep}}.]

(i) It can be seen that, for any $\nu \in \GTM^+(\partial \Omega \cup K)$, $\BBK_\mu[\nu]$ is an $L_\mu$-harmonic function in $\Omega \setminus K$.

Conversely, let $u$ be a positive $L_\mu$-harmonic function in $\Omega \setminus K$. We will show that there exists a unique measure $\nu \in \GTM^+(\partial \cup K)$ such that $u=\BBK_\mu[\nu]$. To do that, we will adapt the ideas in \cite[Theorem 4.3]{hunt}. Let $B$ be a relatively closed subset of $\Omega \setminus K$, we define
$$R^B_u(x):=\inf\{\psi(x):\;\psi\;\text{is nonnegative supersolution in}\; \xO\setminus K\; \text{with}\;\psi\geq u\;\text{on}\;B\}.$$
For a closed subset $F$ of $\partial\xO\cup K,$ we define
$$\xn^x(F):=\inf\{R^{(\xO\setminus K)\cap\overline{G}}_u(x):\;F\subset G,\;G\;\text{open in}\;\mathbb{R}^N \}.$$

The set function $\xn^x$ defines a regular Borel
measure on $\partial\xO\cup K$ for each fixed $x\in\xO\setminus K.$ Since $\xn^x(F)$ is a positive $L_\xm$-harmonic function
in $\xO\setminus K$, the measures $\xn^x$ are absolutely continuous with respect to $\xn^{x_0}$ by Harnack's
inequality. Hence,
$$\xn^x(F)=\int_Fd\xn^x(F)(y)=\int_F\frac{d\xn^x(F)}{d\xn^{x_0}(F)}d\xn^{x_0}(y).$$

We assert that
\begin{equation} \label{dv} \frac{d\xn^x(F)}{d\xn^{x_0}(F)} = K_{\mu}(x, y) \quad \text{for $\nu^{x_0}$-a.e. $y$ in $ \partial\xO \cup K$.}
\end{equation}
Indeed, by Besicovitch's
theorem,
$$\frac{d\xn^x(F)}{d\xn^{x_0}(F)}=\lim_{r\to0}\frac{\xn^x(\xD_{r}(y))}{\xn^{x_0}(\xD_r(y))},$$
for $\nu^{x_0}$-a.e. $y$ in $ \partial\xO \cup K$. By Lemma \ref{Vsuperhar} and in view of the proof of Proposition \ref{uniq} we have that $\frac{d\xn^x(F)}{d\xn^{x_0}(F)}$ is a kernel function, and by uniqueness of the Martin kernel,  the assertion \eqref{dv} follows. Hence $\xn^x(A)=\int_AK_{\mu}(x,y)d\xn^{x_0}(y)$ for all Borel $A\subset\partial\xO\cup K$ and in particular
$$u(x)=\xn^x(\partial\xO\cup K)=\int_{\partial\xO\cup K}K_{\mu}(x,y)d\xn^{x_0}(y).$$

Suppose now $u(x)=\int_{\partial\xO\cup K}K_{\mu}(x,y)d\xn(y)$ for a Borel measure $\xn$ on $\partial\xO \cup K$. For a closed set $F\subset  K$ we will show that $\xn(F)=\xn^{x_0}(F)$.

Choose a sequence of open sets $\{G_n \}$ in $\mathbb{R}^N$ such that $\cap_{n=1}^\infty G_n=F$ and
$$\xn^{x}(F)=\lim_{n \rightarrow\infty}R^{(\Omega \setminus K) \cap \overline{G}_n}_u(x).$$
Since $R^B_u(x)\leq R^A_u(x)$ if $B\subset A$, we can choose $G_n$ such that $\overline{G}_{n+1}\subset G_n,\;\forall n \geq1$ and $\Omega \setminus G_n$ to be a $C^2$ domain for all $n \geq1$. In view of the proof of Lemma \ref{Vsuperhar}, we may assume that $R^{(\Omega \setminus K) \cap \overline{G}_n}_u(x)=V_n$ where $V_n$ is the $L_\mu$-superharmonic in Lemma \ref{Vsuperhar} for $D=G_n$. Furthermore we have that  $R^{(\Omega \setminus K) \cap \overline{G}_n}_u(x)=u(x)$ in $\overline{G}_n \cap (\Omega \setminus K)$ and $R^{(\Omega \setminus K) \cap \overline{G}_n}_u(x) \leq u(x)$ for all $x\in \Omega \setminus K$.

Let $\{ \Omega_n \}$ and $\{K_n\}$  be sequences satifying \eqref{Omegan} and \eqref{Kn} respectively. We may assume that $ G_n \Subset K_n$ for any $n \in \N$. Set $O_n=\xO_n\setminus K_n$ and denote by $\omega^{x_0}_{O_n}$  the $L_\xm$-harmonic measure in $\partial O_n$ (see \eqref{sub}-\eqref{redu2}).
%%%
Let $n>\ell$  and $v_n$ be the unique solution of $L_\xm v=0$ in $O_n$ and $v_n=R^{(\Omega \setminus K) \cap \overline{G}_\ell}_u(x)$ on $\partial O_n$. Since $R^{(\Omega \setminus K) \cap \overline{G}_\ell}_u(x)$ is supersolution in $\xO\setminus K,$ we have that $R^{(\Omega \setminus K) \cap \overline{G}_\ell}_u(x)\geq v_n(x)$ for any $x\in O_n$. Hence
\begin{align*}
R^{(\Omega \setminus K) \cap \overline{G}_\ell}_u(x_0)\geq v_n(x_0) =\int_{\partial{O_n}}R^{(\Omega \setminus K) \cap \overline{G}_\ell}_u(y)d\omega^{x_0}_{O_n}(y)
\geq\int_{\partial  O_n\cap G_\ell}R^{(\Omega \setminus K) \cap \overline{G}_\ell}_u(y)d\omega^{x_0}_{O_n}(y).
\end{align*}
Now, by Lemma \ref{Vsuperhar},
\begin{align*}
\int_{\partial O_n\cap G_\ell}R^{(\Omega \setminus K) \cap \overline{G}_\ell}_u(y)d\omega^{x_0}_{O_n}(y)
&=\int_{\partial O_n\cap G_\ell}u(y)d\omega^{x_0}_{O_n}(y)\\
&\geq\int_{F_m}\int_{\partial O_n\cap G_\ell}K_{\mu}(y,\xi)d\omega^{x_0}_{O_n}(y)d\xn(\xi),
\end{align*}
where $F_m\subset F,$ $\cup F_m=F$ and $\dist(F_m, K\setminus F)>\frac{1}{m}.$
%%%
If $\xi\in F_m$ we have
\begin{align*}
1=K_{\mu}(x_0,\xi)&=\int_{\partial O_n\cap G_\ell}K_{\mu}(y,\xi)d\omega^{x_0}_{O_n}(y)+\int_{\partial{O_n}\setminus G_\ell}K_{\mu}(y,\xi)d\omega^{x_0}_{O_n}(y).
\end{align*}
But $K_{\mu}(y,\xi)\leq c(n) d^{-\am}_K(y)$ for all $y\in\partial{O_n}\setminus G_\ell$, thus by Proposition \ref{traceW} we have that
$$\lim_{n\rightarrow\infty}\int_{\partial{O_n}\setminus G_\ell}K_{\mu}(y,\xi)d\omega^{x_0}_{O_n}(y)=0.$$
Combining the above inequalities and Lebesgue's dominated convergence theorem yields
$$R^{(\Omega \setminus K) \cap \overline{G}_\ell}_u(x_0)\geq\lim_{n\rightarrow\infty}\int_{F_m}\int_{\partial O_n \cap G_\ell}K_{\mu}(y,\xi)d\omega^{x_0}_{O_n}(y)d\xn(\xi)=\xn(F_m).$$
Hence, letting $\ell \to\infty$ and $m\to \infty$ successively, we obtain $\nu^{x_0}(F) \geq \nu(F)$.

 For the reverse inequality, let $m>\ell$. Then
 \begin{align*}
R^{(\Omega \setminus K) \cap \overline{G}_\ell}_u(x_0)&=\int_{\partial O_\ell}R^{(\Omega \setminus K) \cap \overline{G}_\ell}_u(y)d\omega^{x_0}_{O_\ell}(y)\\
&=\int_{\partial O_\ell \cap G_{m}}R^{(\Omega \setminus K) \cap \overline{G}_\ell}_u(y)d\omega^{x_0}_{O_\ell}(y)+\int_{\partial O_\ell\setminus  G_{m}}R^{(\Omega \setminus K) \cap \overline{G}_\ell}_u(y)d\omega^{x_0}_{O_\ell}(y).
\end{align*}
 In view of the proof of Lemma \ref{Vsuperhar}, we deduce $R^{(\Omega \setminus K) \cap \overline{G}_\ell}_u(x)\leq Cd_K^{-\am}(x)$ for all $ x\in \xO\setminus G_m$.
 Thus by Proposition \ref{traceW} we have
$$\lim_{m\rightarrow\infty}\int_{\partial O_\ell \setminus  G_{m}}R^{(\Omega \setminus K) \cap \overline{G}_\ell}_u(y)d\omega^{x_0}_{O_\ell}(y)=0,$$
\begin{align*}
\int_{\partial O_\ell \cap G_{m}}R^{(\Omega \setminus K) \cap \overline{G}_\ell}_u(y)d\omega^{x_0}_{O_\ell}(y)&\leq\int_{\partial O_\ell\cap G_{m}}u(y)d\omega^{x_0}_{O_\ell}(y)\\
%&=\int_{\partial O_\ell \cap G_{m}}\int_{\partial\xO\cup K}K_{\mu}(y,\xi)d\xn(\xi)d\omega^{x_0}_{O_\ell}(y)\\
&=\int_{\partial\xO\cup K}\int_{\partial O_\ell \cap G_{m}}K_{\mu}(y,\xi)d\omega^{x_0}_{O_\ell}(y)d\xn(\xi).
\end{align*}
If $\xi\in(\partial\xO\cup K)\setminus G_{m}$, we infer again from Proposition \ref{traceW} that
$$\lim_{\ell \rightarrow\infty}\int_{\partial O_\ell \cap G_{m}}K_{\mu}(y,\xi)d\omega^{x_0}_{O_\ell}(y)=0.$$
%%%
If $\xi\in\partial\xO\cap G_{m}$, then
$$
\int_{\partial O_\ell \cap G_{m}}K_{\mu}(y,\xi)d\omega^{x_0}_{O_\ell}(y)\leq K_{\mu}(x_0,\xi)=1.
$$
Combining all the above inequalities, we obtain
$$\xn^{x_0}(F)=\lim_{\ell \rightarrow\infty}R^{(\Omega \setminus K) \cap \overline{G}_\ell}_u(x_0)\leq \int_{(\partial\xO\cup K)\cap\overline{G}_m}K_{\mu}(x_0,\xi)d\xn(\xi)=\xn((\partial\xO\cup K)\cap\overline{G}_m),$$
which implies $\nu^{x_0}(F)\leq \nu(F)$.
Thus we get the desired result in case $F \subset K$.

If $F\subset \partial \xO$ the proof is very similar and we omit it.
\end{proof}

\section{Boundary value problem for linear equations}

\subsection{Boundary trace}

We first examine the boundary trace of $\BBK_\mu[\nu]$.
\begin{lemma} \label{tracemartin} For any $\nu \in \GTM(\partial \Omega \cup K)$, $\tr(\BBK_\mu[\nu])=\nu$.	
\end{lemma}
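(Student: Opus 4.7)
The plan is to reduce, by Jordan decomposition and linearity, to the case $\nu\in\GTM^+(\partial\Omega\cup K)$, and then to show, for every $\phi\in C(\overline{\Omega})$, that
\[
\lim_{n\to\infty}\int_{\partial O_n}\phi\,\BBK_\mu[\nu]\,d\omega^{x_0}_{O_n}=\int_{\partial\Omega\cup K}\phi\,d\nu.
\]
Writing $\BBK_\mu[\nu](y)=\int K_\mu(y,\xi)\,d\nu(\xi)$ and applying Fubini, the left-hand side equals $\int_{\partial\Omega\cup K}T_n\phi(\xi)\,d\nu(\xi)$, where
\[
T_n\phi(\xi):=\int_{\partial O_n}\phi(y)K_\mu(y,\xi)\,d\omega^{x_0}_{O_n}(y).
\]
The task thus reduces to proving $T_n\phi(\xi)\to\phi(\xi)$ for every $\xi\in\partial\Omega\cup K$, together with a uniform bound $|T_n\phi|\le\|\phi\|_{C(\overline\Omega)}$ that activates dominated convergence against the finite measure $\nu$.

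The uniform bound is immediate from the mean-value property: since $K_\mu(\cdot,\xi)$ is $L_\mu$-harmonic on $\Omega\setminus K\supset O_n$ and continuous up to $\partial O_n$, the definition of $\omega^{x_0}_{O_n}$ yields $\int_{\partial O_n}K_\mu(y,\xi)\,d\omega^{x_0}_{O_n}(y)=K_\mu(x_0,\xi)=1$, whence $T_n 1(\xi)=1$ and $|T_n\phi(\xi)|\le\|\phi\|_{C(\overline\Omega)}$. For the pointwise convergence I would fix $\xi\in\partial\Omega\cup K$ and $\varepsilon>0$, choose an open neighborhood $U$ of $\xi$ in $\overline\Omega$ with $|\phi-\phi(\xi)|<\varepsilon$ on $U\cap\overline\Omega$, and split
\[
T_n\phi(\xi)-\phi(\xi)=\int_{\partial O_n\cap U}(\phi-\phi(\xi))K_\mu(\cdot,\xi)\,d\omega^{x_0}_{O_n}+\int_{\partial O_n\setminus U}(\phi-\phi(\xi))K_\mu(\cdot,\xi)\,d\omega^{x_0}_{O_n}.
\]
The near piece is controlled by $\varepsilon T_n 1(\xi)=\varepsilon$, so the entire argument hinges on showing that the far piece tends to $0$.

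The main obstacle is this far-field estimate, for which I would exploit Definition~\ref{kernel}(ii), ensuring that $K_\mu(\cdot,\xi)/\tilde W$ extends continuously to $\overline{\Omega}\setminus\{\xi\}$ and vanishes on $(\partial\Omega\cup K)\setminus\{\xi\}$. Introducing a cutoff $\chi\in C(\overline\Omega)$ with $\chi\equiv 1$ on $\overline{\Omega}\setminus U$ and $\chi\equiv 0$ in a smaller neighborhood of $\xi$, the product $F:=\chi K_\mu(\cdot,\xi)/\tilde W$ extends to a function in $C(\overline\Omega)$ vanishing on $\partial\Omega\cup K$. Proposition~\ref{traceW} applied to $F$ then gives, as $n\to\infty$,
\[
\int_{\partial O_n\setminus U}K_\mu(\cdot,\xi)\,d\omega^{x_0}_{O_n}\le\int_{\partial O_n}F\tilde W\,d\omega^{x_0}_{O_n}\to\int_{\partial\Omega\cup K}F\,d\omega^{x_0}=0,
\]
so the far piece is bounded by $2\|\phi\|_\infty\cdot o(1)$. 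Letting $\varepsilon\to 0$ yields $T_n\phi(\xi)\to\phi(\xi)$, and dominated convergence completes the argument.
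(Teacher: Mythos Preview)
Your argument is correct and complete. The paper itself omits the proof entirely, referring to \cite[Lemma~2.2]{MV}; your approach---Fubini reduction to the pointwise claim $T_n\phi(\xi)\to\phi(\xi)$, the mean-value identity $T_n1\equiv1$ from the $L_\mu$-harmonicity and normalization of $K_\mu(\cdot,\xi)$, and the far-field decay via Definition~\ref{kernel}(ii) combined with Proposition~\ref{traceW}---is exactly the standard mechanism behind such trace identities and is what the cited reference carries out in its setting.
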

\begin{proof}
	The proof is the same as the proof of Lemma 2.2 in \cite{MV} and we omit it.
\end{proof}
\begin{lemma} \label{tracegreen}
	Assume $\tau\in\mathfrak{M}(\xO\setminus K;\ei)$  and put $u=\mathbb{G}_{\mu}[\tau].$ Then $u\in W^{1,p}_{loc}(\xO\setminus K)$ for every $1<p<\frac{N}{N-1}$
	and $\tr(u)=0$.
\end{lemma}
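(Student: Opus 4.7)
The plan is to prove the two assertions separately. For the $W^{1,p}_{\mathrm{loc}}$-regularity, I would first verify that $u=\BBG_\mu[\tau]\in L^1_{\mathrm{loc}}(\Omega\setminus K)$; this follows from Fubini together with the identity $\lambda_\mu\BBG_\mu[\phi_\mu]=\phi_\mu$ (a consequence of $-L_\mu\phi_\mu=\lambda_\mu\phi_\mu$), which gives $\int_\Omega u\phi_\mu\,dx=\lambda_\mu^{-1}\int_{\Omega\setminus K}\phi_\mu\,d|\tau|<\infty$, combined with the fact that $\phi_\mu\geq c>0$ on compact subsets of $\Omega\setminus K$. Since $\mu d_K^{-2}$ is bounded on any compact subset of $\Omega\setminus K$, the distributional identity $-L_\mu u=\tau$ can be rewritten as $-\Delta u=\tau+\mu d_K^{-2}u$, whose right-hand side is a locally finite Radon measure in $\Omega\setminus K$. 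The classical Boccardo--Gallou\"et $W^{1,p}$-estimates for elliptic equations with bounded measure data then yield $u\in W^{1,p}_{\mathrm{loc}}(\Omega\setminus K)$ for every $1<p<\frac{N}{N-1}$.

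For the trace, by linearity I would reduce to $\tau\geq 0$. Fix a compact exhaustion $\{D_j\}$ of $\Omega\setminus K$ with $x_0\in D_1\Subset D_2\Subset\cdots$, and decompose $\tau=\tau_j+\tau^j$ with $\tau_j:=\chi_{D_j}\tau$, so that $u=u_j+u^j$ where $u_j:=\BBG_\mu[\tau_j]$, $u^j:=\BBG_\mu[\tau^j]$. Two preliminary facts are read off from Theorem~\ref{Greenkernel} together with \eqref{eigenfunctionestimates}: \textbf{(A)} there exists $C=C(x_0)$ such that $G_\mu(x_0,y)\leq C\phi_\mu(y)$ for every $y\in\Omega\setminus K$ with $|y-x_0|\geq\dist(x_0,\partial D_1)$; \textbf{(B)} for any compact $D\Subset\Omega\setminus K$, $\sup_{y\in D}G_\mu(x,y)/\tilde W(x)\longrightarrow 0$ uniformly as $x\to\partial\Omega\cup K$. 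The decisive mechanism behind (B) is the gap between $\am$ and $\ap$: near $\partial\Omega$ the ratio decays like $d(x)$, while near $K$ it decays like $d_K(x)^{\ap-\am}$ in the subcritical case and like $|\ln d_K(x)|^{-1}$ in the critical case.

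Applying (B) to $D=\overline{D_j}$ yields $u_j(x)/\tilde W(x)\leq\tau(D_j)\sup_{y\in D_j}G_\mu(x,y)/\tilde W(x)\to 0$ uniformly as $x\to\partial\Omega\cup K$, where $\tau(D_j)<\infty$ because $\phi_\mu$ is bounded below on $D_j$. Combined with Proposition~\ref{traceW}, which ensures that $\int_{\partial O_n}\tilde W\,d\omega^{x_0}_{O_n}$ is uniformly bounded, this gives
$\bigl|\int_{\partial O_n}\varphi u_j\,d\omega^{x_0}_{O_n}\bigr|\leq\|\varphi\|_\infty\bigl(\sup_{\partial O_n}u_j/\tilde W\bigr)\int_{\partial O_n}\tilde W\,d\omega^{x_0}_{O_n}\longrightarrow 0$
as $n\to\infty$ for each fixed $j$. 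For $u^j$, I would use Fubini together with the supersolution mean-value inequality: writing the Riesz decomposition $G_\mu(\cdot,y)=G^{O_n}_\mu(\cdot,y)+h_n(\cdot,y)$ in $O_n$, the harmonic remainder satisfies $h_n(\cdot,y)=G_\mu(\cdot,y)$ on $\partial O_n$, hence $\int_{\partial O_n}G_\mu(x,y)\,d\omega^{x_0}_{O_n}(x)=h_n(x_0,y)\leq G_\mu(x_0,y)$. Since $\operatorname{supp}\tau^j\subset(\Omega\setminus K)\setminus D_1$, estimate (A) produces the $n$-independent bound $\bigl|\int_{\partial O_n}\varphi u^j\,d\omega^{x_0}_{O_n}\bigr|\lesssim\|\varphi\|_\infty\int\phi_\mu\,d\tau^j$, which vanishes as $j\to\infty$ by dominated convergence (since $\int\phi_\mu\,d\tau<\infty$). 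A standard $\varepsilon/2$-argument combining the two bounds gives $\int_{\partial O_n}\varphi u\,d\omega^{x_0}_{O_n}\to 0$ for every $\varphi\in C(\overline\Omega)$, i.e.\ $\tr(u)=0$.

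The main technical obstacle will be the verification of (B) uniformly in $y\in D$, especially in the critical case $\mu=H^2$: one must manipulate the two-sided bounds \eqref{Greenesta}--\eqref{Greenestb} and the definition \eqref{tildeW} of $\tilde W$ carefully, using the logarithmic factor to produce decay in the critical subcase where $k=0$, $K=\{0\}$ and $\mu=(N-2)^2/4$. The choice $x_0\in D_1$ is also important: it ensures that any possible atom of $\tau$ at $x_0$ is absorbed into $u_j$, where it is handled by (B), rather than into $u^j$, where (A) would not apply.
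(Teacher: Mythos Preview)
Your argument is correct, but the paper's proof takes a shorter and more direct route. Instead of splitting $\tau$ into a compactly supported piece and a tail, the paper works with the full potential at once: writing $u=\BBG_\mu^{O_n}[\tau|_{O_n}]+v_n$ in $O_n$, where $v_n$ is $L_\mu$-harmonic with boundary data $u$, one has $v_n(x_0)=\int_{\partial O_n}u\,d\omega^{x_0}_{O_n}$; since $G_\mu^{O_n}\nearrow G_\mu$ monotonically, $\BBG_\mu^{O_n}[\tau|_{O_n}](x_0)\nearrow u(x_0)$ and hence $v_n(x_0)\to 0$. That is the whole proof for $\tau\geq 0$.

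Your tail bound is in fact this same idea in disguise: the inequality $\int_{\partial O_n}G_\mu(\cdot,y)\,d\omega^{x_0}_{O_n}\leq G_\mu(x_0,y)$ that you derive from the Riesz decomposition is exactly the paper's identity $v_n(x_0)=u(x_0)-\BBG_\mu^{O_n}[\tau|_{O_n}](x_0)$ specialized to $\tau=\delta_y$, and had you pushed it to the limit (using $h_n(x_0,y)\to 0$ rather than merely $h_n(x_0,y)\leq G_\mu(x_0,y)$) you would have recovered the paper's argument without any need for the compact/tail split. The extra layer you add---handling the compact piece via the explicit kernel asymptotics (B) extracted from Theorem~\ref{Greenkernel}---is sound and your case analysis of the $\am=\ap=H$ situation with the logarithmic $\tilde W$ is careful, but it is machinery the paper avoids entirely. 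The paper's approach has the advantage of not invoking the two-sided Green estimates at all; yours has the advantage of being more self-contained once those estimates are in hand.
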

\begin{proof}
	By \cite[Theorem 1.2.2]{MVbook}, $u\in W^{1,p}_{loc}(\xO\setminus K)$ for every $1<p<\frac{N}{N-1}$. Let $\{O_n\}$ be a smooth exhaustion of $\xO\setminus K$ (see \eqref{Omegan} and \eqref{Kn}) and $v_n$ be the unique solution of
	$$ \left\{ \BAL
	L_{\xm }^{O_n}v&=0\qquad&&\text{in } O_n\\
	v&=u\qquad&&\text{on } \prt O_n.
	\EAL \right.
	$$
	We note here that $v_n(x_0)=\int_{\prt O_n} u(y) d\gw^{x_0}_{O_n}(y)$.
	We first assume that $\tau\geq0$.  Let $G^{O_n}_{\mu}$ be the Green kernel of $-L_\xm$ in $O_n$, then $G^{O_n}_{\xm}(x,y)\nearrow G_{\mu}(x,y)$ for any $x,y\in \xO\setminus K$ and $x\neq y$. Put $\tau_n=\tau|_{O_n}$ and $u_n=\BBG_\mu^{O_n}[\tau_n]$ then $u_n\nearrow u$ a.e. in $\xO \setminus K$. By uniqueness we have that $u=u_n+ v_n$ a.e. in $O_n$. In particular, $u(x_0)=u_n(x_0)+v_n(x_0)$. This implies, by sending $n$ at infinity, that $\lim_{n \to \infty}v_n(x_0)=0$. Consequently, $\tr(u)=0$.
	
	In the general case, the result follows by the linearity.
\end{proof}

\begin{proof}[\textbf{Proof of Proposition \ref{traceKG}}.] The results follow from Lemma \ref{tracemartin} and Lemma \ref{tracegreen}.
\end{proof}

\begin{proof}[\textbf{Proof of Theorem \ref{subsuperhar}}.] (i) Since $-L_\mu u \geq 0$ in the sense of distributions in $\Omega \setminus K$, there exists a nonnegative Radon measure $\tau$ in $\Omega \setminus K$ such that $-L_\mu u=\tau$ in the sense of distributions. By \cite[Lemma 1.5.3]{MVbook}, $u\in W^{1,p}_{loc}(\xO\setminus K).$
	
	Let $\{O_n\}$ be a smooth exhaustion of $\xO\setminus K$ (see \eqref{Omegan} and \eqref{Kn}). Denote by $G_\mu^{O_n}$ and $P_\mu^{O_n}$ the Green kernel and the Poisson kernel of $-L_\mu$ in $O_n$ respectively (recalling that $P_\mu^{O_n} = -\partial_{\bf n}G_\mu^{O_n}$). Then $u=\BBG_{\mu}^{O_n}[\tau]+ \BBP_\mu^{O_n}[v]$, where $\BBG_\mu^{O_n}$ and $\BBP_\mu^{O_n}$ are the Green operator and the Poisson operator in $\{ O_n\}$ respectively.
	
	Since $\tau$ and $\BBP_\mu^{O_n}[v]$ are nonnegative and $G_{\mu}^{O_n}(x,y)\nearrow G_{\mu}(x,y)$ for any $x\neq y$ and $x,y\in \xO\setminus K$, we obtain $0 \leq \mathbb{G}_{\mu}[\tau]\leq u$  a.e. in  $\Omega \setminus K$. In particular, $0 \leq \mathbb{G}_{\mu}[\tau](x_0) \leq u(x_0)$ where $x_0 \in \Omega \setminus K$ is a fixed reference point. This, together with the estimate $G_\mu(x_0,\cdot) \gtrsim \ei$ a.e. in $\Omega \setminus K$, implies $\tau \in \GTM(\Omega \setminus K;\ei)$.
	
	Moreover, we see that $u-\mathbb{G}_{\mu}[\tau]$ is a nonnegative $L_\mu$-harmonic function in $\xO\setminus K$. Thus by Theorem \ref{th:Rep}, there exists a unique $\nu \in \GTM^+(\partial\xO\cup K)$ such that \eqref{reprweaksol} holds. \smallskip
	
	(ii) Since $-L_\mu u \leq 0$ in the sense of distributions in $\Omega \setminus K$, there exists a nonnegative Radon measure $\tau$ in $\Omega \setminus K$ such that $-L_\mu u =-\tau$ in the sense of distributions. By \cite[Lemma 1.5.3]{MVbook}, $u \in W^{1,p}_{loc}(\xO\setminus K)$. Let $O_n$ and $\BBP_\mu^{O_n}$ be as in (i).
	Then  $u+\BBG_{\mu}^{O_n}[\tau]=\BBP_\mu^{O_n}[v]$. This, together with the fact that $u \geq 0$ and  $\BBP_\mu[u] \leq w$, implies $\BBG_\mu^{O_n}[\tau] \leq w$. By using a similar argument as in (i), we deduce that $\tau \in \GTM(\Omega \setminus K;\ei)$ and there exists $\nu \in \GTM^+(\partial \Omega \cup K)$ such that \eqref{vGK2} holds.
\end{proof}

\subsection{Boundary value problem for linear equations}

We recall that ${\bf X}_\mu(\Omega \setminus K)$ is defined in \eqref{Xmu}. The following result provides an estimate for functions in ${\bf X}_\mu(\Omega \setminus K)$.

\begin{lemma} \label{testfunc-prop}
	For any $\zeta \in {\bf X}_\mu(\Omega \setminus K)$, there holds $|\zeta| \lesssim \ei$ in $\Omega \setminus K$.
\end{lemma}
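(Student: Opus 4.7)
The plan is to reduce the bound $|\zeta|\lesssim \phi_\mu$ to an $L^\infty$ estimate for $v:=\zeta/\phi_\mu$ satisfied by a weighted divergence-form equation, and then invoke the same Moser-iteration machinery that was already exploited in the proof of Lemma \ref{existence1b}.

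First I would set $v := \zeta/\phi_\mu$ and $g := \phi_\mu^{-1} L_\mu \zeta$. By the assumption that $\zeta\in \mathbf{X}_\mu(\Omega\setminus K)$, we have $g \in L^\infty(\Omega)$, and Proposition \ref{test}(i) together with the estimate $\phi_\mu \approx d\,d_K^{-\am}$ from \eqref{eigenfunctionestimates} gives
$$v \in H^1(\Omega;\phi_\mu^2) = H^1_0(\Omega\setminus K;\phi_\mu^2).$$

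Next I would derive the weighted elliptic equation satisfied by $v$. A direct computation, using the identity $L_\mu(\phi_\mu v) = \phi_\mu \Delta v + 2\nabla\phi_\mu\cdot\nabla v + v\,L_\mu\phi_\mu$ and the eigenvalue equation $-L_\mu\phi_\mu = \lambda_\mu \phi_\mu$, yields
$$\phi_\mu L_\mu\zeta \;=\; \mathrm{div}(\phi_\mu^2\nabla v) - \lambda_\mu \phi_\mu^2 v.$$
Multiplying by $-1$ and using $\phi_\mu L_\mu\zeta = \phi_\mu^2 g$, we get the weak identity
$$-\mathrm{div}(\phi_\mu^2\nabla v) + \lambda_\mu \phi_\mu^2 v \;=\; -\phi_\mu^2 g \qquad \text{in } \Omega\setminus K,$$
where the right-hand side satisfies $|{-}\phi_\mu^2 g|\leq \|g\|_{L^\infty(\Omega)}\phi_\mu^2$.

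Then I would apply a Moser iteration argument to this weighted equation. This is precisely the same scheme invoked in the proof of Lemma \ref{existence1b}, which in turn refers to \cite[Theorem 2.12]{FMoT2} and \cite[Theorem 3.7]{FMoT}. Testing the equation against $|v|^{2p-1}\mathrm{sign}(v)\eta^2$ for a cutoff $\eta$ and using the weighted Sobolev inequality with weight $\phi_\mu^2$ established in \cite{FMoT}, one obtains, after iteration on $p$, the global estimate
$$\|v\|_{L^\infty(\Omega\setminus K)} \;\leq\; C\,\|g\|_{L^\infty(\Omega)},$$
with $C=C(N,\Omega,K,\mu)$. Returning to $\zeta = \phi_\mu v$ yields
$$|\zeta(x)|\leq C\,\|g\|_{L^\infty(\Omega)}\,\phi_\mu(x), \qquad x\in \Omega\setminus K,$$
which is the desired bound $|\zeta|\lesssim \phi_\mu$.

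The main technical obstacle is of course Step 3, namely running Moser iteration in the degenerate/singular weighted space $H^1_0(\Omega\setminus K;\phi_\mu^2)$ up to the full boundary $\partial\Omega\cup K$. However, this is not a new difficulty: the weighted Sobolev inequalities of \cite{FMoT} are exactly tailored to this setting, and the argument has already been carried out (and cited) in the proof of Lemma \ref{existence1b}; for our statement the right-hand side $-\phi_\mu^2 g$ is in fact of the cleanest possible form, so no additional work is required beyond adapting those iterations.
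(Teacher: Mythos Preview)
Your proof is correct but takes a different route from the paper. The paper observes that $-L_\mu\zeta = f\phi_\mu$ for some $f\in L^\infty(\Omega)$, uses $\phi_\mu\approx d\,d_K^{-\am}$ to rewrite the right-hand side as (bounded)$\cdot d_K^{-\am}$, and then invokes Lemma~\ref{existence2} with $b=\am$ and $\gamma=\am$ to read off $|\zeta|\lesssim d\,d_K^{-\am}\approx\phi_\mu$; this rests on the Green kernel pointwise estimates of Proposition~\ref{green}. You instead pass to $v=\zeta/\phi_\mu$, derive the weighted equation $-\div(\phi_\mu^2\nabla v)+\lambda_\mu\phi_\mu^2 v=-\phi_\mu^2 g$ with $g\in L^\infty$, and run the Moser iteration of \cite{FMoT,FMoT2} exactly as in Lemma~\ref{existence1b} to conclude $v\in L^\infty$. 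Your approach is more self-contained in that it avoids the Green kernel machinery entirely; the paper's approach is shorter because it recycles the estimate \eqref{veryweakest} already proved. Note also that your right-hand side $-\phi_\mu^2 g$ is in the most natural form for the weighted Moser scheme (bounded times the weight $\phi_\mu^2$), so the iteration is indeed cleaner here than in Lemma~\ref{existence1b}, as you remark.
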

\begin{proof}
	Let $\zeta \in {\bf X}_\mu(\Omega \setminus K)$ then there exists $f \in L^\infty(\Omega)$ such that $-L_\mu \zeta = f\phi_\mu$ in $\Omega \setminus K$. By using Lemma \ref{existence2} with $b=\am$ and \eqref{eigenfunctionestimates}, we derive $|\zeta| \lesssim \ei$ in $\Omega \setminus K$ (in fact we choose $\gamma=\am$ in \eqref{veryweakest}).
\end{proof}

\begin{lemma}\label{existence3}
	Let $\tau\in\GTM(\Omega \setminus K;\ei).$ Then there exists a unique weak solution $u$ of \eqref{NHLP} with $\xn=0$. Furthermore $u=\mathbb{G}_{\mu}[\tau]$ and there holds
	\begin{align}  \label{esti1}
	\| u \|_{L^1(\Omega;\ei)}  \leq \frac{1}{\lambda_\mu} \| \tau \|_{\GTM(\Omega \setminus K;\ei)}.
	\end{align}
\end{lemma}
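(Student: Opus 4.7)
The plan is to take $u:=\BBG_\mu[\tau]$ as the candidate solution, establish the $L^1$-bound via a resolvent identity on the ground state, and then use duality with ${\bf X}_\mu(\Omega\setminus K)$ for both the weak formulation and uniqueness. The key preliminary is the identity
\[
\BBG_\mu[\ei]=\frac{1}{\lambda_\mu}\ei\quad\text{in }\Omega\setminus K.
\]
Since $\ei$ solves $-L_\mu\ei=\lambda_\mu\ei$, and \eqref{eigenfunctionestimates} together with \eqref{W}--\eqref{tildeW} gives $\ei/\tilde W\to 0$ uniformly near every compact subset of $\partial\Omega\cup K$, the function $\ei$ satisfies \eqref{boundary=0}. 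Writing $\lambda_\mu\ei=\tilde h\,d_K^{-\am}$ with $\tilde h\in L^\infty(\Omega)$ (using $\ei\lesssim d_K^{-\am}$), the uniqueness in Lemma~\ref{existence2} when $\am>0$ (or Lemma~\ref{existence1b} when $\am\leq 0$) forces $\ei=\BBG_\mu[\lambda_\mu\ei]$. Then Tonelli's theorem with the nonnegative kernel $G_\mu$ yields, for $\tau\geq 0$,
\[
\int_\Omega\BBG_\mu[\tau]\,\ei\,dx=\int_{\Omega\setminus K}\BBG_\mu[\ei]\,d\tau=\frac{1}{\lambda_\mu}\int_{\Omega\setminus K}\ei\,d\tau,
\]
and decomposing $\tau=\tau_+-\tau_-$ delivers $u\in L^1(\Omega;\ei)$ and \eqref{esti1}.

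Next I would establish a companion identity on test functions: for any $\zeta\in{\bf X}_\mu(\Omega\setminus K)$, putting $h:=\ei^{-1}L_\mu\zeta\in L^\infty(\Omega)$, one has $\zeta=-\BBG_\mu[h\ei]$. Both sides solve $-L_\mu v=-h\ei$; Proposition~\ref{test}(i) places $\zeta/\ei$ in $H^1_0(\Omega\setminus K;\ei^2)$, while applying Lemma~\ref{existence2} with $b=\am$ to the decomposition $h\ei=(h\ei d_K^{\am})d_K^{-\am}$ (noting $h\ei d_K^{\am}\in L^\infty$) together with the Moser iteration of Lemma~\ref{existence1b} yields $|\BBG_\mu[h\ei]|\lesssim \ei$ and places $\BBG_\mu[h\ei]/\ei$ in the same class; Lemma~\ref{existence1} then gives the claimed equality. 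With this identity in hand, Fubini---justified by the ground-state bound $\int_\Omega G_\mu(x,y)\ei(x)\,dx=\ei(y)/\lambda_\mu$ and $h\in L^\infty$---converts
\[
\int_{\Omega\setminus K}\zeta\,d\tau=-\int_{\Omega\setminus K}\BBG_\mu[h\ei]\,d\tau=-\int_\Omega h\ei\cdot u\,dx=-\int_\Omega u\,L_\mu\zeta\,dx
\]
into precisely \eqref{lweakform} with $\nu=0$, so $u=\BBG_\mu[\tau]$ is a weak solution.

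For uniqueness, let $w=u_1-u_2\in L^1(\Omega;\ei)$ be the difference of two weak solutions. For every $g\in L^\infty(\Omega)$, the function $\zeta_g:=\BBG_\mu[g\ei]$ lies in ${\bf X}_\mu(\Omega\setminus K)$ by the previous paragraph and satisfies $-L_\mu\zeta_g=g\ei$; plugging $\zeta_g$ into the weak formulation for $w$ forces $\int_\Omega w g\,\ei\,dx=0$, and varying $g\in L^\infty(\Omega)$ yields $w=0$ a.e. The main obstacle is the inversion identity $\zeta=-\BBG_\mu[L_\mu\zeta]$: when $\mu>0$ the right-hand side $L_\mu\zeta=h\ei$ is generally not bounded (since $\ei$ blows up along $K$), so one cannot apply Lemma~\ref{existence1b} directly; Lemma~\ref{existence2} with $b=\am$ must be combined with weighted Moser iteration to upgrade the Green-potential pointwise bound into the weighted Sobolev regularity required by Lemma~\ref{existence1}, thereby reconciling two a priori different characterisations of the same solution.
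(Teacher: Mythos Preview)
Your proof is correct and takes a genuinely different route from the paper. The paper proceeds by (i) establishing the a~priori estimate via Kato's inequality applied to the test function solving $-L_\mu\zeta=\sign(u)\ei$ to obtain $|\zeta|\leq\lambda_\mu^{-1}\ei$, (ii) deducing uniqueness from this estimate, and (iii) building existence by a three-layer approximation: first $\tau=f\,dx$ with $f\in L^\infty(\Omega)$ (where Lemma~\ref{existence1} applies directly and one identifies $u=\BBG_\mu[f]$ via Lemma~\ref{comparison}), then $f\in L^1(\Omega;\ei)$, then general measures, passing to the limit in the weak formulation at each stage. Your approach bypasses the approximation entirely by setting $u:=\BBG_\mu[\tau]$ from the start, obtaining the $L^1$-bound in one line from the resolvent identity $\BBG_\mu[\ei]=\lambda_\mu^{-1}\ei$ and Tonelli, and verifying \eqref{lweakform} by Fubini once the inversion formula $\zeta=-\BBG_\mu[L_\mu\zeta]$ is in hand. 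What you gain is a cleaner, more symmetric argument that makes explicit the key structural fact---namely that $g\mapsto\BBG_\mu[g\ei]$ is a bijection between $L^\infty(\Omega)$ and ${\bf X}_\mu(\Omega\setminus K)$---which the paper uses implicitly when it writes ``let $\zeta\in{\bf X}_\mu$ be such that $-L_\mu\zeta=\sign(u)\ei$'' without further comment. The price you pay is the need to justify that $\BBG_\mu[h\ei]$ coincides with the Lemma~\ref{existence1} solution when $h\ei\notin L^\infty$; you flag this correctly, and since $\ei\in L^2(\Omega)$ (because $2\am\leq 2H<N-k$) one has $h\ei\in L^2(\Omega)$, so Lemma~\ref{existence1} applies directly and the identification with $\BBG_\mu[h\ei]$ follows by approximating $h\ei$ in $L^2$ by truncations and invoking Lemma~\ref{existence1b} on each truncation.
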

\begin{proof}
	\emph{A priori estimate.} Assume $u \in L^1(\Omega;\ei)$ is a weak solution of \eqref{NHLP}. Let $\zeta \in {\bf X}_\mu(\Gw\setminus K)$ be such that $-L_\mu\zeta=\sign( u)\ei$. By Kato's inequality,
	$$-L_\mu |\zeta|\leq -\sign(\zeta)L_\mu \zeta \leq \ei=-L_\mu\left(\frac{1}{\lambda_\mu}\phi_\mu \right).$$
	Hence by Lemma \ref{comparison} we can easily deduce that $|\zeta|\leq \frac{1}{\xl_\xm} \ei$ in $\Omega \setminus K$. This, combined with \eqref{lweakform}, implies \eqref{esti1}. (Here we note that $\nu=0$ in \eqref{lweakform}.) \medskip
	
	\noindent \emph{Uniqueness.} The uniqueness follows directly from \eqref{esti1}. \medskip
	
	\noindent \emph{Existence.} Assume $\tau=fdx$ with $f \in L^\infty(\Omega)$. The existence follows by Lemma \ref{existence1}.
	
	Since $f\in L^\infty(\Omega)$, using a Moser iteration argument similar to the one in \cite[Theorem 2.12]{FMoT2} (see also \cite[Theorem 3.7]{FMoT}) we can show that there exists a positive constant $C>0$ such that $\sup_{x\in\xO\setminus K}|v|\leq C$, which implies $|u(x)|\leq C\ei(x)$ for all $x\in\xO\setminus K$.
	\medskip
	
	\noindent \textit{Next we will show that $u=\BBG_\mu[f]$.} Set $w=\BBG_\mu[f]$ then we can easily show that $w$ satisfies $-L_\xm w=f$ in the sense of distributions in $\Omega \setminus K$ and by Lemma \ref{existence1b}, there exists a positive constant $C$ such that $|w(x)| \leq C\ei(x)$  for all $x\in\xO\setminus K$. Therefore,
	$$ \lim_{\dist(x,F) \to 0}\frac{|u(x)-w(x)|}{\tilde W(x)} \leq  C\lim_{\dist(x,F) \to 0}\frac{\ei(x)}{\tilde W(x)} =0
	$$
	for all compact set $F \subset \partial \Omega \cup K$.
	Furthermore, we note that $|u-v|$ is $L_\mu$-subharmonic in $\Omega \setminus K$. Hence from Lemma \ref{comparison}, we deduce that $|u-w|=0$, i.e. $u=w$ in $\Omega \setminus K$.
	
	Now assume that $\tau =fdx$ with $f\in L^1(\Omega;\ei)$. Let $f_n\in L^\infty(\Omega)$ such that $f_n\rightarrow f$ in $ L^1(\Omega;\ei)$. Put $u_n:=\BBG_\mu[f_n]$ then
	\begin{equation} \label{weakfnnu=0}
	- \int_{\Omega}u_n L_{\xm }\zeta \, dx=\int_{\Omega} f_n \zeta  \, dx
	\qquad\forall \xi \in\mathbf{X}_\xm(\xO\setminus K).
	\end{equation}
	By \eqref{esti1} we can easily prove that $u_n=\BBG_\mu[f_n] \to \BBG_\mu[f]:=u$ in $L^1(\Omega;\ei)$. Then by letting $n \to \infty$ and using Lemma \ref{testfunc-prop}, we deduce the desired result when $f\in L^1(\Omega;\ei)$.
	
	Assume $\tau \in \GTM(\Omega \setminus K;\ei)$. Let $\{f_n\}$ be a sequence in  $L^1(\Omega;\ei)$ such that
	$f_n\rightharpoonup \tau $ in $C_\ei(\Omega \setminus K)$, where $C_\ei(\Omega \setminus K)$ denotes the space of functions $\zeta \in C(\Omega \setminus K)$ such that
	$\ei \zeta  \in L^\infty(\Omega)$. Then proceeding as above we can prove that
	$u_n=\BBG_\mu[f_n] \to \BBG_\mu[\tau]:=u$ in $L^1(\Omega;\ei)$
	and $u$ satisfies \eqref{lweakform} with $\xn=0.$
\end{proof}

\begin{theorem}
	Let $\tau,\rho\in\mathfrak{M}(\xO\setminus K;\ei)$, $\xn \in \mathfrak{M}(\partial\xO\cup K)$ and $f\in L^1(\Omega;\ei)$.  Then there exists a unique weak solution $u\in L^1(\Omega;\ei)$ of \eqref{NHLP}. Furthermore
	\be \label{reprweaksol1}
	u=\mathbb{G}_{\mu}[\tau]+\mathbb{K}_{\xm}[\xn].
	\ee
	There exists a positive constant $C=C(N,\Omega,K,\mu)$ such that
	\begin{align} \label{esti2}
	\|u\|_{L^1(\Omega;\ei)} \leq \frac{1}{\lambda_\mu}\| \tau \|_{\GTM(\Omega \setminus K;\ei)} + C \| \nu \|_{\GTM(\partial\Omega \cup K)} \quad \zeta \in \mathbf{X}_\xm(\xO\setminus K)
	\end{align}
	In addition, if $d\tau=fdx+d\rho$ then, for any $\zeta \in \mathbf{X}_\xm(\xO\setminus K)$ and $\zeta \geq 0$, there hold
	\be\label{poi4}
	-\int_{\Omega}|u|L_{\mu }\zeta \, dx\leq \int_{\Omega}\sign(u)f\zeta\, dx +\int_{\Omega \setminus K}\zeta d|\rho|-
	\int_{\Omega}\mathbb{K}_{\xm}[|\xn|] L_{\xm }\zeta \, dx,
	\ee
	\be\label{poi5}
	-\int_{\Omega}u_+L_{\mu }\zeta \, dx\leq \int_{\Omega} \sign_+(u)f\zeta\, dx +\int_{\Omega \setminus K}\zeta\,d\rho_+-
	\int_{\Omega}\mathbb{K}_{\xm}[\nu_+]L_{\xm }\zeta \,dx,
	\ee
	\be\label{poi6}
	-\int_{\Omega}u_- L_{\mu }\zeta \, dx\leq \int_{\Omega} \sign_-(u)f\zeta\, dx +\int_{\Omega \setminus K}\zeta\,d\rho_- -
	\int_{\Omega}\mathbb{K}_{\xm}[\nu_-]L_{\mu }\zeta \,dx.
	\ee
	%where
	%$$ \sign_+(u) =  \frac{\sign(u) + 1}{2}, \quad \sign_-(u) = \frac{\sign(u)-1}{2}.
	%$$
\end{theorem}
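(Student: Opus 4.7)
The plan is to obtain the solution by superposition, taking $u:=\BBG_\mu[\tau]+\BBK_\mu[\nu]$ as the natural candidate for both existence and the representation \eqref{reprweaksol1}. First I would verify that this $u$ lies in $L^1(\Omega;\ei)$ with the desired bound. For the Green part, Lemma \ref{existence3} yields $\|\BBG_\mu[\tau]\|_{L^1(\Omega;\ei)}\leq\lambda_\mu^{-1}\|\tau\|_{\GTM(\Omega\setminus K;\ei)}$. For the Martin part, Fubini's theorem reduces matters to the uniform bound $\int_\Omega K_\mu(x,\xi)\ei(x)\,dx\lesssim 1$ for all $\xi\in\partial\Omega\cup K$, which is the $p=1$ endpoint version of Proposition \ref{pcritical1} and follows from the two-sided estimates \eqref{Martinest1}--\eqref{Martinest2} together with Lemma \ref{anisotita}. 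Summing the two contributions produces \eqref{esti2}.

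Next I would check that this $u$ satisfies the weak formulation \eqref{lweakform}. By linearity, Lemma \ref{existence3} applied to the Green part yields $-\int_\Omega\BBG_\mu[\tau]L_\mu\zeta\,dx=\int_{\Omega\setminus K}\zeta\,d\tau$ for every $\zeta\in\mathbf{X}_\mu(\Omega\setminus K)$, while the Martin part contributes precisely the term $-\int_\Omega\BBK_\mu[\nu]L_\mu\zeta\,dx$; summing delivers \eqref{lweakform}. For uniqueness, let $w\in L^1(\Omega;\ei)$ be the difference of two weak solutions; then $\int_\Omega w\,L_\mu\zeta\,dx=0$ for every $\zeta\in\mathbf{X}_\mu(\Omega\setminus K)$. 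Given $g:=\sign(w)\in L^\infty(\Omega)$, Lemma \ref{existence2} with $b=\am$ (together with \eqref{eigenfunctionestimates}) furnishes a $\zeta\in\mathbf{X}_\mu(\Omega\setminus K)$ solving $-L_\mu\zeta=g\ei$; substituting forces $\int_\Omega|w|\ei\,dx=0$, hence $w\equiv 0$.

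For the Kato-type inequalities \eqref{poi4}--\eqref{poi6}, I would run a density argument. Choose sequences $\{f_n\},\{g_n\}\subset L^\infty(\Omega)$ and $\{\nu_n\}\subset C(\partial\Omega\cup K)$ with $f_n\ei\to f\ei$ in $L^1(\Omega)$, $g_n\,dx\rightharpoonup\rho$ and $\nu_n\rightharpoonup\nu$ narrowly, arranged (by mollification of the Jordan decompositions) so that the absolute-value approximations converge narrowly as well, and set $u_n:=\BBG_\mu[(f_n+g_n)\,dx]+\BBK_\mu[\nu_n]$ via Lemmata \ref{existence1b} and \ref{mainlemma1}. Kato's distributional inequality gives $-L_\mu|u_n|\leq\sign(u_n)(f_n+g_n)$ in $\Omega\setminus K$, so $|u_n|$ is itself a weak solution (in the sense of Definition \ref{weaksol-LP}) of a problem whose datum is dominated by $\sign(u_n)(f_n+g_n)\,dx$ and whose trace is $|\nu_n|$. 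Applying the weak formulation established in the previous step to $|u_n|$ and pairing against a nonnegative $\zeta\in\mathbf{X}_\mu(\Omega\setminus K)$ yields the analogue of \eqref{poi4} for the approximations, with $|g_n|\,dx$ in place of $|\rho|$. Passing to the limit recovers \eqref{poi4}, and \eqref{poi5}--\eqref{poi6} follow by replacing $\sign$ with $\sign_\pm$. The main obstacle is this limit passage: $\sign(u_n)\to\sign(u)$ only a.e.\ on $\{u\neq 0\}$, so the convergence $\int\sign(u_n)f_n\zeta\,dx\to\int\sign(u)f\zeta\,dx$ requires care on $\{u=0\}$, which is typically handled by splitting according to $\{|u_n|<\varepsilon\}$ and using the equi-integrability of $\{f_n\ei\,dx\}$ (inherited from the $L^1$ convergence) before sending $\varepsilon\to 0$; uniform control of the Martin term is provided by the $L^1(\Omega;\ei)$-bound from Step 1 together with the built-in bound $|L_\mu\zeta|\lesssim\ei$ in the definition of $\mathbf{X}_\mu(\Omega\setminus K)$.
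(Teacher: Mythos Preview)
Your treatment of existence, the representation formula, the a~priori estimate \eqref{esti2}, and uniqueness is correct and essentially identical to the paper's (the paper also cites Lemma~\ref{existence3} and Corollary~\ref{cor:pcritical}, and uniqueness follows from \eqref{esti2}).

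For the Kato estimates \eqref{poi4}--\eqref{poi6} you take a genuinely different route: the paper does \emph{not} approximate the data but instead approximates the domain. It takes a smooth exhaustion $\{O_n\}$ of $\Omega\setminus K$, writes $u=\BBG_\mu^{O_n}[\tau|_{O_n}]+v_\tau^n+\BBK_\mu[\nu]$ with $v_\tau^n$ harmonic in $O_n$ matching $\BBG_\mu[\tau]$ on $\partial O_n$, applies the classical integrated Kato inequality from \cite[Proposition~1.5.9]{MVbook} on $O_n$ (where everything is regular), uses the pointwise bound $|u|\leq\BBG_\mu[|\tau|]+\BBK_\mu[|\nu|]$ on $\partial O_n$ to handle the boundary term, and then constructs approximate test functions $z_n,\zeta_n\in C_0^2(O_n)$ converging to $\zeta$. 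This way one always works with the actual solution $u$, so no limit in $\sign(u_n)$ or in $|u_n|$ is ever needed.

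Your approach has two weak points. First, the assertion that ``$|u_n|$ is a weak solution \dots\ whose trace is $|\nu_n|$'' is not free: you must show $\tr(|u_n|)=|\nu_n|$, which requires invoking the uniform boundary behaviour $u_n/\tilde W\to h_n$ from Lemmata~\ref{existence1b} and~\ref{mainlemma1} (since $f_n,g_n\in L^\infty$ and $\nu_n$ has continuous density $h_n$), and then Theorem~\ref{subsuperhar}(ii) to get $|u_n|$ into the form $\BBG_\mu[\cdot]+\BBK_\mu[\cdot]$. Second, and more seriously, passing to the limit on the left-hand side requires $|u_n|\to|u|$ in $L^1(\Omega;\ei)$, hence $u_n\to u$ in $L^1(\Omega;\ei)$. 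But with $g_n\,dx\rightharpoonup\rho$ only narrowly, the a~priori estimate \eqref{esti1} controls $\|\BBG_\mu[g_n\,dx-\rho]\|_{L^1(\ei)}$ only by $\|g_n\,dx-\rho\|_{\GTM(\Omega\setminus K;\ei)}$, which does \emph{not} tend to zero under weak convergence. You would need an extra ingredient (e.g.\ pointwise convergence of $\BBG_\mu[g_n]$ plus uniform integrability via weak-$L^p$ estimates for the Green operator) or a different approximation scheme (first truncate $\rho$ to $\rho|_{O_m}$ to get strong $\GTM(\cdot;\ei)$ convergence, then mollify). The paper's domain-exhaustion argument sidesteps both issues entirely.
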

\begin{proof}
	\textit{Existence.}
	The existence and \eqref{reprweaksol1} follow from Lemma \ref{existence3} and Theorem \ref{th:Rep}. \medskip
	
	\noindent \textit{A priori estimate \eqref{esti2}.} By Corollary \ref{cor:pcritical}, $\| \BBK_\mu[|\nu|] \|_{L^1(\Omega;\ei)}  \lesssim \| \nu \|_{\GTM(\partial\Omega \cup K)}$. This, together with \eqref{esti1} and \eqref{reprweaksol1}, implies \eqref{esti2}. \medskip
	
	\noindent \textit{Uniqueness.}  The uniqueness follows from \eqref{esti2}.  \medskip
	
	\noindent \emph{Proof of estimates \eqref{poi4}--\eqref{poi6}.}
	Assume $d\tau=fdx+d\rho$ and let $\{O_n\}$ be a smooth exhaustion of $\xO\setminus K$ and . Let $v_\tau^n$ be the solution of
	$$ \left\{ \BAL
	-L_{\xm }^{O_n}v&=0\qquad&&\text{in } O_n\\
	v&=\mathbb{G}_{\mu}[\tau]\qquad&&\text{on } \prt O_n,
	\EAL \right.
	$$
	and $w_\xn=\mathbb{K}_{\xm}[\nu]$. Then by the uniqueness, $u=\mathbb{G}^{O_n}_{\mu}[\tau|_{O_n}]+v_\tau+w_\xn$ and $|u|\leq \mathbb{G}_{\mu}[|\tau|]+w_{|\xn|}$ $\mathcal{H}^{N-1}$-a.e. on $\partial O_n$ (here on $\mathcal{H}^{N-1}$ denotes the Hausdorff measure on $\partial O_n$).
	
	For any nonnegative $\eta \in C_0^2(O_n),$ by \cite[Proposition 1.5.9]{MVbook},
	\begin{align}
	-\int_{O_n}|u|L_\xm \eta \leq \int_{O_n}\sign(u)f\eta dx+ \int_{O_n}\sign(u)\eta d|\rho|-\int_{\partial O_n}|u|\frac{\partial\eta}{\partial {\bf n}^n}dS
	\end{align}
	where ${\bf n}^n$ is the unit outer normal vector on $\partial O_n.$
	
	Since $|u| \leq \BBG_\mu[|\tau|] + w_{|\nu|}$ a.e. on $\partial O_n$ and $\frac{\partial \eta}{\partial {\bf n}^n} \leq 0$ on $\partial O_n$, by using integration by parts, we obtain
	\begin{align*}
	-\int_{\partial O_n}|u|\frac{\partial\eta}{\partial {\bf n}^n}dS \leq -\int_{\partial O_n}(\mathbb{G}_{\mu}[|\tau|]+w_{|\xn|})\frac{\partial\eta}{\partial {\bf n}^n}dS=-\int_{O_n}(v_{|\tau|}^n+w_{|\xn|})L_\xm\eta dx.
	\end{align*}
	Hence
	\begin{align}
	-\int_{O_n}|u|L_\xm \eta dx \leq \int_{O_n}\sign(u)f\eta dx+ \int_{O_n}\sign(u)\eta d|\rho|-\int_{O_n}(v_{|\tau|}^n+w_{|\xn|})L_\xm\eta dx. \label{ee1}
	\end{align}
	
	Let $\xz\in \mathbf{X}_\xm(\xO\setminus K)$, $\zeta > 0$ in $\xO\setminus K$. Let $z_n$ and $\zeta_n$ be respectively solutions of
	$$ \left\{ \BAL
	-L_{\xm }z_n&=-L_{\xm }\xz\qquad&&\text{in } O_n\\
	z_n&=0\qquad&&\text{on } \prt O_n,
	\EAL \right. \qquad
	 \left\{ \BAL
	-L_{\xm }\zeta_n&=-\sign(z_n) L_{\xm }\xz\qquad&&\text{in } O_n\\
	\zeta_n&=0\qquad&&\text{on } \prt O_n.
	\EAL \right.$$
	
	By Kato's inequality, $-L_\mu |z_n|\leq -\sign(z_n) L_\mu z_n$ in the sense of distributions in $O_n$. Hence by a comparison argument, we have that $|z_n|\leq \xz_n$ in $O_n$. Furthermore it can be checked  that $z_n \to \zeta$ and $\xz_n \to \xz$ in $L^1(\Omega;\ei)$ and locally uniformly in $\xO\setminus K$.
	
	Now note that \eqref{ee1} is valid for any nonnegative solution $\eta\in C_0^2(O_n)$. Thus we can use $\zeta_n$ as a test function in \eqref{ee1} to obtain
	\begin{equation} \label{ee2} \begin{aligned}
	-\int_{O_n}|u|\sign(z_n) L_\xm \xz dx &\leq \int_{O_n}\sign(u)f\xz_n dx+ \int_{O_n}\sign(u)\xz_n d|\rho| \\
	&-\int_{O_n}(v_{|\tau|}^n+w_{|\xn|})\sign(z_n)L_\xm\xz dx.
	\end{aligned} \end{equation}
	
	Also, since $\mathbb{G}_{\mu}[|\tau|]=\mathbb{G}_{\mu}^{O_n}[|\tau||_{O_n}]+v_{|\tau|}^n$ a.e. in $O_n$, we deduce that $v_{|\tau|}^n\rightarrow 0$ in $L^1(\Omega;\ei)$ as $n \to \infty$. Letting $n \to \infty$ in \eqref{ee2}, we obtain \eqref{poi4} since $\zeta>0$ in $\Omega \setminus K$. Estimates \eqref{poi5} and \eqref{poi6} follow by adding \eqref{poi4} and \eqref{lweakform}. Thus the proof is complete when $\xz$ is positive.
	
	If $\xz$ is nonnegative we set $\xz_\xe=\xz+\xe\ei.$ Then estimates \eqref{poi4}--\eqref{poi6} are valid for $\xz_\xe$ for any $\xe>0.$  The desired result follows by sending $\varepsilon \to \infty$.
\end{proof}	

%%%%%%%%%%%%%%%%%%%%%%%%%%%%%%%%%%%%%%%%%%%%%%
%%%%%%%%%%%%%%%%%%%%%%%%%%%%%%%%%%%%%%%%%%%%%%


\begin{thebibliography}{99}
\bibitem{AFT}Adimurthi, S. Filippas, A. Tertikas, \emph{On the best constant of Hardy-Sobolev inequalities}, Nonlinear Anal. {\bf 70} (2009), 2826--2833.

\bibitem{An} A. Ancona, \emph{Negatively curved manifolds, elliptic operators and the Martin boundary}, Annals Math. 2nd Series {\bf 125} (1987), 495--536.

%\bibitem{BMR} C. Bandle, V. Moroz and W. Reichel, {\em Boundary blowup type    %sub-solutions to semilinear elliptic equations with Hardy potential}, J. London Math. Soc. %{\bf 2} (2008), 503--523.

\bibitem{BFT} G. Barbatis, S. Filippas, A. Tertikas, \emph{A unified approach to improved $L^p$ Hardy inequalities with best constants.}
Trans. Amer. Math. Soc. {\bf 356} (2004), 2169--2196.

\bibitem{BVi} M. F. Bidaut-V\'eron and L. Vivier, {\em An elliptic semilinear
equation with source term involving boundary measures: the subcritical case},
Rev. Mat. Iberoamericana {\bf 16} (2000), 477--513.

%\bibitem{Bog} K. Bogdan, \emph{Sharp estimates for the Green function in Lipschitz %domains}, J. Math. Anal. Appl. {\bf 243} (2000), 326--337.

\bibitem{caffa} L. Caffarelli, E. Fabes, S. Mortola \& S. Salsa, \emph{Boundary behavior of nonnegative solutions of elliptic operators in divergence form}, Indiana Univ. Math. J. {\bf 30} (1981), 621--640.

\bibitem{Chen1} H. Chen, S. Alhomedan, H. Hajaiej, P. Markowich, \emph{Fundamental solutions for Schr\"{o}dinger operators with general inverse square potentials},
Appl. Anal. {\bf 97} (2018), 787--810.

\bibitem{Chen2} H. Chen, A. Quaas and F. Zhou, \emph{On nonhomogeneous elliptic equations with the inverse square potential}, preprint.

\bibitem{CheVer1} H. Chen and L. V\'eron, \emph{Weak solutions of semilinear elliptic equations with Leray-Hardy potential and measure data}, Mathematics in Engineering {\bf 1} (2019), 391-418.

\bibitem{CheVer2} H. Chen and L. V\'eron, \emph{Schr\"{o}dinger operators with Leray-Hardy potential singular on the boundary}, 2019 (arXiv:1906.07583).

\bibitem{CheVer3} H. Chen and L. V\'eron, \emph{Boundary singularities of semilinear elliptic equations with Leray-Hardy potential}, 2019 (arXiv:1910.00336).

%\bibitem{Cir}  F. C\^irstea, \emph{A complete classification of the isolated singularities for nonlinear elliptic equations with inverse square potentials}, American mathematical society, Vol. %227, No. 1068, (2014).

\bibitem{DD1} J.D\'avila and L. Dupaigne, {\em Comparison results for PDEs with a singular potential}, Proc. Roy. Soc. Edinburgh Sect. A {\bf 133} (2003), 61--83.

\bibitem{DD2} J.D\'avila and L. Dupaigne, {\em Hardy-type inequalities}, J. Eur. Math. Soc. {\bf 6} (2004), 335--365.

\bibitem{DN} L. Dupaigne and G. Nedev, {\em Semilinear elliptic PDE’s with a singular potential}, Adv. Differential Equations {\bf 7} (2002), 973--1002.

%\bibitem{Fall} M. Fall, F. Mahmoudi, \emph{Weighted Hardy inequality with higher %dimensional singularity on the boundary.} Calc. Var. Partial Differential Equations {\bf 50} %(2014), 779--798.

%\bibitem{FMT} S. Filippas, V. Mazʹya, A. Tertikas, \emph{Critical Hardy-Sobolev %inequalities}, J. Math. Pures Appl. {\bf 87} (2007), 37--56.


\bibitem{FMoT} S. Filippas, L. Moschini, A. Tertikas, \emph{Improving $L^2$ estimates to Harnack inequalities}, Proc. Lond. Math. Soc. {\bf 99} (2009),326--352.
\bibitem{FMoT2} S. Filippas, L. Moschini \& A. Tertikas, \emph{Sharp two-sided heat kernel estimates for critical Schr\"odinger operators on bounded domains}, Comm. Math. Phys. {\bf 273} (2007), 237--281.

%\bibitem{FT} S. Filippas, A. Tertikas,\emph{ Optimizing improved Hardy inequalities.} J. %Funct. Anal. {\bf 192} (2002), 186-233.

\bibitem{GkV} K. T. Gkikas and L. V\'eron, \emph{Boundary singularities of solutions of semilinear elliptic equations with critical Hardy potentials}, Nonlinear Anal. \textbf{121} (2015), 469--540.

\bibitem{hunt} R.A. Hunt and  R. L. Wheeden, \emph{Positive harmonic functions on Lipschitz domains}, Transactions of the American Mathematical Society {\bf 147} (1970), 507--527.

\bibitem{Mar1} M. Marcus, \emph{Estimates of Green and Martin kernels for Schr\"odinger operators with singular potential in Lipschitz domains}, Ann. Inst. H. Poincar\'e Anal. Non Lin\'eaire {\bf 36} (2019), 1183--1200.

\bibitem{Mar2} M. Marcus, \emph{Estimates of sub and super solutions of Schr\"odinger equations with very singular potentials}, 2019 (arXiv:1912.01283).

\bibitem{MarMor} M. Marcus and V. Moroz, \emph{Moderate solutions of semilinear elliptic equations with Hardy potential under minimal restrictions on the potential}, Ann. Sc. Norm. Super. Pisa Cl. Sci. {\bf 18} (2018), 39--64.

\bibitem{MarNgu} M. Marcus and P.-T. Nguyen, \emph{Moderate solutions of semilinear elliptic equations with Hardy potential}, Ann. Inst. H. Poincar\'e Anal. Non Lin\'eaire {\bf 34} (2017), 69--88.

\bibitem{Mar-Ng} M. Marcus and P. T. Nguyen, \emph{Schr\"odinger equations with singular potentials: linear and nonlinear boundary value problems},  Math. Ann. {\bf 374} (2019), 361--394.

%\bibitem{Kuf} A. Kufner, \emph{Weighted Sobolev spaces}, Teubner-Textezur Mathematik %31 (Teubner, Leipzig, 1981).

\bibitem{MVbook} M. Marcus and L. V\'eron, {\em Nonlinear second order elliptic equations involving measures}, De Gruyter Series in Nonlinear Analysis and Applications, 2013.

\bibitem{MV} M. Marcus, L. V\'{e}ron, \emph{ Boundary trace of positive solutions of semilinear elliptic equations in Lipschitz domains: the subcritical case},  Ann. Sc. Norm. Super. Pisa Cl. Sci.  {\bf 10} (2011), 913--984.

\bibitem{Vbook} L. V\'{e}ron, \emph{Singularities of Solutions of Second Order Quasilinear Equations}, Pitman Research Notes in Math. Series 353, (1996).

\bibitem{Zha} Z. Zhao, \emph{Green function for Schr\"odinger operator and conditioned Feynman-Kac gauge}, J. Math. Anal. Appl. {\bf 116} (1986), 309--334.

\end{thebibliography}
\end{document}